\newcommand{\bB}{{\mathbb{B}}}
\newcommand{\bC}{{\mathbb{C}}}
\newcommand{\bD}{{\mathbb{D}}}
\newcommand{\bM}{{\mathbb{M}}}
\newcommand{\bN}{{\mathbb{N}}}
\newcommand{\bR}{{\mathbb{R}}}
\newcommand{\bS}{{\mathbb{S}}}
\newcommand{\bT}{{\mathbb{T}}}
  \newcommand{\A}{{\mathcal{A}}}
  \newcommand{\E}{{\mathcal{E}}}
  \newcommand{\F}{{\mathcal{F}}}
  \newcommand{\G}{{\mathcal{G}}}
\renewcommand{\H}{{\mathcal{H}}}
  \newcommand{\J}{{\mathcal{J}}}
  \newcommand{\M}{{\mathcal{M}}}
  \newcommand{\N}{{\mathcal{N}}}
\renewcommand{\O}{{\mathcal{O}}}
\renewcommand{\P}{{\mathcal{P}}}
  \newcommand{\R}{{\mathcal{R}}}
\renewcommand{\S}{{\mathcal{S}}}
  \newcommand{\Z}{{\mathcal{Z}}}
\newcommand{\fa}{{\mathfrak{a}}}
\newcommand{\fB}{{\mathfrak{B}}}
\newcommand{\fC}{{\mathfrak{C}}}
\newcommand{\fc}{{\mathfrak{c}}}
\newcommand{\fJ}{{\mathfrak{J}}}
\newcommand{\fK}{{\mathfrak{K}}}
\newcommand{\fT}{{\mathfrak{T}}}
\newcommand{\fW}{{\mathfrak{W}}}
\newcommand{\fz}{{\mathfrak{z}}}
\newcommand{\rC}{\mathrm{C}}
\newcommand{\eps}{\varepsilon}
\renewcommand{\phi}{\varphi}
\newcommand{\upchi}{{\raise.35ex\hbox{$\chi$}}}
\newcommand{\lip}{\langle}
\newcommand{\rip}{\rangle}
\newcommand{\ip}[1]{\lip #1 \rip}
\newcommand{\ol}{\overline}
\newcommand{\nin}{\notin}
\newcommand{\of}{\circ}
\newcommand{\qand}{\quad\text{and}\quad}
\newcommand{\ran}{\operatorname{ran}}
\newcommand{\spn}{\operatorname{span}}
\newcommand{\supp}{\operatorname{supp}}
\newcommand{\fb}{\mathfrak{b}}
\newcommand{\Ta}{\operatorname{Ta}}
\newtheorem{lemma}{Lemma}[section]
\newtheorem{theorem}[lemma]{Theorem}
\newtheorem{corollary}[lemma]{Corollary}
\theoremstyle{definition}
\newtheorem{example}{Example}
\begin{document}
\author{Rapha\"el Clou\^atre}

\address{Department of Mathematics, University of Manitoba, Winnipeg, Manitoba, Canada R3T 2N2}

\email{raphael.clouatre@umanitoba.ca\vspace{-2ex}}
\thanks{RC was partially supported by an NSERC Discovery Grant. EJT was partially supported by a PIMS postdoctoral fellowship.}
\author{Edward J. Timko}
\email{edward.timko@umanitoba.ca\vspace{-2ex}}

\subjclass[2010]{47L55, 46E22, 47A13}

\begin{abstract}
The main objects under study are quotients of multiplier algebras of certain complete Nevanlinna--Pick spaces, examples of which include the Drury--Arveson space on the ball and the Dirichlet space on the disc. We are particularly interested in the non-commutative Choquet boundaries for these quotients. Arveson's notion of hyperrigidity is shown to be detectable through the essential normality of some natural multiplication operators, thus extending previously known results on the Arveson--Douglas conjecture. We also highlight how the non-commutative Choquet boundaries of these quotients are intertwined with their Gelfand transforms being completely isometric. Finally, we isolate analytic and topological conditions on the so-called supports of the underlying ideals that clarify the nature of the non-commutative Choquet boundaries.

\end{abstract}

\title[Gelfand transforms of Nevanlinna--Pick quotients]{Gelfand transforms and boundary representations of complete Nevanlinna--Pick quotients}
\date{\today}
\maketitle

\section{Introduction}\label{S:intro}

The symbiotic relationship between analytic function theory on the open unit disc $\bD\subset \bC$ and operator theory has long been fruitfully exploited.  The standard setting is that of the Hardy space $H_1^2$ consisting of those analytic functions on $\bD$ with square summable Taylor coefficients at the origin, and the main actor is the operator $M_z:H_1^2\to H_1^2$ of multiplication by the variable $z$. Given an inner function $\theta$, we can consider the so-called ``model operator'', obtained as the following compression
\[
 Z_\theta=P_{(\theta H_1^2)^\perp} M_z|_{(\theta H_1^2)^\perp}.
\]
This operator and the various operator algebras related to it contain a striking amount of information about the function $\theta$. One key to unraveling this information is an additional piece of structure that the Hardy space enjoys: the Szeg\"o kernel 
\[
 k_1(z,w)=(1-z\ol{w})^{-1}, \quad z,w\in \bD
\]
is its reproducing kernel.

The potential lying within these concepts and their interactions was uncovered early on by Sarason in his seminal paper \cite{sarason1967}, wherein it was shown how a complete description of the operators commuting with $Z_\theta$ is equivalent to several classical interpolation results, once the function $\theta$ is chosen appropriately. In this instance, operator theory clarifies a function theory problem. 
In the reverse direction, operator theoretic information can be extracted from the fine function theoretic properties of the symbol $\theta$. Indeed, the model operators $Z_\theta$, and operator-valued versions thereof, are the building blocks for a very robust classification theory of Hilbert space contractions (see \cite{nagy2010},\cite{bercovici1988}). 

In the aforementioned developments a unifying central figure is what is now called the Commutant Lifting Theorem  \cite{Nagy1968}, which identifies the commutant of $Z_\theta$ as the weak-$*$ closed unital operator algebra that it generates. This operator algebra can also be regarded as the quotient $\M_1/\theta \M_1$, where $\M_1$ denotes the multiplier algebra of $H^2_1$. We then see that the quotient algebra $\M_1/\theta \M_1$ carries information that is relevant both from the function theoretic and the operator theoretic points of view. Operator algebraic properties of these quotients were further leveraged in \cite{clouatre2015jordan}  to sharpen some classification results. More precisely, a key point there was whether or not the identity representation lies in the non-commutative Choquet boundary. An earlier foray into such questions can be found in Arveson's work in \cite{arveson1969} and \cite{arveson1972}, which undertook a careful study of the norm-closed subalgebra of $\M_1/\theta \M_1$ generated by $Z_\theta$.

Although the foregoing discussion is set in the one-variable world of the unit disc, there is an analogous multivariate version of the story that takes places on the open unit ball $\bB_d\subset \bC^d$, where $d$ is some positive integer. This higher-dimensional setting has been a fertile ground for many years for research in both operator theory and function theory, and it continues to be to this day. Here, the classical Hardy space $H_1^2$ is replaced with an appropriate generalization, called the Drury--Arveson space and denoted by  $H^2_d$. The reproducing kernel of $H^2_d$ is
\[
 k_d(z,w)=(1-\langle z,w\rangle_{\bC^d})^{-1}, \quad z,w\in \bB_d
\]
and the multiplier algebra is denoted by $\M_d$. The intense research activity surrounding this space stems partly from the fact that it is universal (in a precise technical sense) for both the purposes of function theory \cite{AM2000} and of operator theory \cite{arveson1998}. 

A commutant lifting theorem is known in this context as well \cite{BTV2001}. Furthermore, mirroring the univariate developments, there is a robust yet still developing corresponding classification theory for commuting row contractions using weak-$*$ closed ideals $\fa\subset \M_d$ along with the associated $d$-tuples
\[
 Z_\fa=(P_{[\fa H^2_d]^\perp} M_{z_1}|_{[\fa H^2_d]^\perp},\ldots,P_{[\fa H^2_d]^\perp}M_{z_d}|_{[\fa H^2_d]^\perp})
\]
as building blocks (see \cite{muller1993},\cite{arveson1998},\cite{popescu2006},\cite{CT2019cyclic},\cite{CT2019interp} and the references therein). For these reasons, it is desirable to understand the structure of the quotients $\M_d/\fa$, as well as that of the norm-closed subalgebras generated therein by $Z_\fa$. Once again, the precise question we will explore is whether their identity representations belong to their respective non-commutative Choquet boundaries.

We note that there are some important qualitative differences between the univariate and the multivariate worlds. For instance, it is readily seen that when $d=1$, the model operator $Z_\theta$ is always essentially unitary, in the sense that
\[
 I-Z_\theta^*Z_\theta \qand I-Z_\theta Z_\theta^*
\]
are compact \cite[Theorem 3.4.2]{arveson1969}. The corresponding statement when $d>1$ is not obvious; whether or not it even holds is the content of the celebrated Arveson--Douglas essential normality conjecture (see \cite{englis2015} and \cite{DTY2016} for some recent progress on this problem). Interestingly, our methodology here is in some ways connected to this subtle question, as was first realized in \cite{kennedyshalit2015}.

Throughout the paper, we work in a more general context, one which attempts to distill the salient distinguishing feature of the Drury--Arveson space. Accordingly, we will be considering Hilbert spaces of analytic functions on the ball $\bB_d$ with a complete Nevanlinna--Pick reproducing kernel that is unitarily invariant, along with their multiplier algebras and their quotients. As we explain below, there are many important examples of such spaces, such as the Drury--Arveson space and the Dirichlet space on the disc. The impetus to treat these two spaces (and many more) simultaneously is provided by recent advances in the connection between their function theory and the associated operator theory \cite{BHM2018},\cite{CH2018},\cite{CT2019spectrum} building on ideas from \cite{agler1982},\cite{ambrozie2002},\cite{GHX04}.  With that being said, we emphasize that the setting of the Drury--Arveson space and quotients of its multiplier algebra were the original motivation for the work undertaken in this paper, and many of our results are already new in this concrete setting.

We now describe the organization of the paper and state our main results.  In Section \ref{S:OA}, we gather some background material on operator algebras, boundary representations and their $\rC^*$-envelopes. The main results of this section are Theorem \ref{T:GTC*env} and Corollary \ref{C:GTC*env}, which highlight a connection between the identity representation enjoying a certain unique extension property and the Gelfand transform of a commutative unital operator algebra being completely isometric. 

In Section \ref{S:kernels}, we define what we call maximal regular unitarily invariant reproducing kernel Hilbert spaces with the complete Nevanlinna--Pick property, along with some natural algebras of multipliers associated with them. We also prove some preliminary facts, many of which are undoubtedly known to experts but do not seem to appear explicitly in the literature. 

Let $\H$ be such a space on the unit ball $\bB_d$ and let $\M(\H)$ denote its multiplier algebra. Fix some weak-$*$ closed ideal $\fa\subset \M(\H)$ and let $\H_\fa=\H\ominus [\fa \H]$. We define a weak-$*$ closed, commutative unital operator algebra on $\H_\fa$ as
\[
 \M_\fa=\{P_{\H_\fa}M_\psi|_{\H_\fa}:\psi\in \M(\H)\}.
\]
We also let $\A_\fa\subset \M_\fa$ denote the norm-closed subalgebra generated by the compressed coordinate multipliers $\fz_1,\ldots,\fz_d$, where
\[
 \fz_j=P_{\H_\fa}M_{z_j}|_{\H_\fa},\quad 1\leq j\leq d.
\]
Let $\Delta(\A_\fa)$ denote the space of characters of $\A_\fa$ and let $g:\A_\fa\to C(\Delta(\A_\fa))$ be the Gelfand transform, so that
\[
(g(b))(\chi)=\chi(b), \quad b\in \A_\fa, \chi\in \Delta(\A_\fa).
\]
Our investigation throughout is predicated on determining whether the identity representation is a boundary representation for the algebras $\A_\fa$ and $\M_\fa$. Our analysis bifurcates into two strands, one for $\A_\fa$ and another for $\M_\fa$. The need for a separate analysis of each case is not entirely unexpected, as even in the most classical situation where $\H$ is the Hardy space on the disc, the phenomena highlighted in \cite{arveson1969},\cite{arveson1972} and \cite{clouatre2015jordan} are vastly different, and the methods of proof are quite distinct. 

The knowledge that the identity representation is a boundary representation is expected have some applications in operator theory. Indeed, it could lead to sharper versions of the results of \cite{CT2019cyclic} and \cite{CT2019interp}, in the spirit of what was done in \cite{clouatre2015jordan} in the single-variable context. A more thorough examination of this prospect will be undertaken in a future paper.

In Section \ref{S:Gelfbdry}, we analyze the identity representation of $\fT_\fa=\rC^*(\A_\fa)$ and wish to determine when it is a boundary representation for $\A_\fa$. 
We start by identifying the character space of both $\A_\fa$ and $\fT_\fa$ in Theorem \ref{T:specAaTa} and use it to calculate the essential norm of elements in $\A_\fa$ in Theorem \ref{T:Aaessnorm}. Leveraging these pieces of information, we obtain the following characterization (Theorem \ref{T:unifquotientAa}).

\begin{theorem}\label{T:main1}
Let $\H$ be a  maximal regular unitarily invariant complete Nevanlinna--Pick space. Let $\fa\subset \M(\H)$ be a proper weak-$*$ closed ideal. If $\H_\fa$ has dimension greater than one, then the following statements are equivalent.
\begin{enumerate}[{\rm(i)}]

\item The Gelfand transform of $\A_\fa$ is completely isometric.

 \item The $\rC^*$-envelope of $\A_\fa$ is commutative.

\item The identity representation of $\fT_\fa$ is not a boundary representation for $\A_\fa$.
\end{enumerate}
\end{theorem}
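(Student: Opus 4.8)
The plan is to establish the cyclic chain of implications $(i)\Rightarrow(ii)\Rightarrow(iii)\Rightarrow(i)$, using the general operator-algebraic machinery recorded in Section~\ref{S:OA} together with the explicit description of $\Delta(\A_\fa)$, $\Delta(\fT_\fa)$ and the essential norm formula on $\A_\fa$ obtained in Theorems \ref{T:specAaTa} and \ref{T:Aaessnorm}. The implication $(i)\Rightarrow(ii)$ should follow immediately from the $\rC^*$-envelope half of the correspondence in Theorem~\ref{T:GTC*env}/Corollary~\ref{C:GTC*env}: if the Gelfand transform $g:\A_\fa\to C(\Delta(\A_\fa))$ is completely isometric, then $\A_\fa$ embeds completely isometrically into a commutative $\rC^*$-algebra, so its $\rC^*$-envelope — being the smallest such $\rC^*$-algebra — is a quotient of $C(\Delta(\A_\fa))$ and hence commutative. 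The implication $(ii)\Rightarrow(iii)$ is the contrapositive of a standard fact: if the identity representation of $\fT_\fa=\rC^*(\A_\fa)$ were a boundary representation for $\A_\fa$, then by definition of the $\rC^*$-envelope it would descend to (in fact coincide with) the $\rC^*$-envelope, forcing $\fT_\fa$ itself to be commutative; but $\fT_\fa$ contains the compressed shift tuple $\fz_1,\ldots,\fz_d$ acting on the space $\H_\fa$ of dimension at least two, and such a tuple cannot generate a commutative $\rC^*$-algebra (the single operator $\fz_1$ is already a nonnormal contraction on that space — this is where the hypothesis $\dim \H_\fa>1$ is used).

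The substantive implication is $(iii)\Rightarrow(i)$, and here the expected strategy is to unpack what it means for the identity representation to fail to be a boundary representation. By Arveson's theory (as recalled in Section~\ref{S:OA}), the identity representation of $\fT_\fa$ is a boundary representation for $\A_\fa$ precisely when it has the unique extension property, i.e.\ when the only unital completely positive map on $\fT_\fa$ extending the identity map on $\A_\fa$ is the identity representation itself. If this fails, one produces a u.c.p.\ map $\Phi$ on $\fT_\fa$, restricting to the identity on $\A_\fa$, which is \emph{not} multiplicative; the goal is to use $\Phi$ to build a commutative $\rC^*$-algebra receiving $\A_\fa$ completely isometrically, which by Theorem~\ref{T:GTC*env} is equivalent to $g$ being completely isometric. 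Concretely, I would analyze $\Phi$ via its Stinespring dilation and the multiplicative-domain obstruction: since $\Phi|_{\A_\fa}$ is already an isometric homomorphism into a $\rC^*$-algebra, the failure of uniqueness should be traceable to the structure of $\fT_\fa$ modulo the compacts. This is where Theorem~\ref{T:Aaessnorm} enters: if the essential norm on $\A_\fa$ agrees with the character-space (sup) norm, then the Calkin-algebra image of $\A_\fa$ sits inside a commutative $\rC^*$-algebra, and the quotient map $\fT_\fa\to \fT_\fa/\K$ followed by this identification gives a completely isometric (on $\A_\fa$) map into something commutative; conversely, failure of uniqueness of the extension should force exactly this coincidence of norms, because any ``extra'' noncommutative behavior in $\fT_\fa$ would be detected by a genuinely noncommuting compact perturbation and would make the identity representation irreducible in the relevant sense, hence a boundary representation.

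The main obstacle, as I see it, is the $(iii)\Rightarrow(i)$ direction: translating the abstract failure of the unique extension property into the concrete statement that the essential norm on $\A_\fa$ equals the supremum norm over $\Delta(\A_\fa)$, and then upgrading the resulting isometry to a \emph{complete} isometry. The first part requires a careful dichotomy argument — either the quotient $\fT_\fa/\K$ is commutative, in which case we are done, or it is not, in which case one must exhibit an irreducible boundary representation supported on the compacts, contradicting $(iii)$; making this rigorous likely rests on a detailed understanding of the ideal structure of $\fT_\fa$ via the supports of $\fa$, which is presumably developed in Theorems~\ref{T:specAaTa} and~\ref{T:Aaessnorm}. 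The second part — completeness of the isometry — should be handled by running the same argument at all matrix levels, noting that the character-space representation of $\A_\fa$ is automatically completely contractive (it factors through $C(\Delta(\A_\fa))$) so only the lower matrix bound needs the essential-norm input, uniformly in $n$; the uniformity should come from the fact that the essential norm formula in Theorem~\ref{T:Aaessnorm} is intrinsic and matricial in nature. I would also double-check that the hypothesis that $\fa$ is \emph{proper} is what guarantees $\Delta(\A_\fa)$ is nonempty so that the Gelfand transform is a sensible object, and that $\dim\H_\fa>1$ is exactly what prevents the degenerate case where $\A_\fa\cong\bC$ and all three statements hold trivially.
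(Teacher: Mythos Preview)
Your treatment of $(i)\Leftrightarrow(ii)$ and $(ii)\Rightarrow(iii)$ is correct and matches the paper's, though one minor point: to see that $\fT_\fa$ is not commutative you do not need to argue that $\fz_1$ is nonnormal. The clean reason is Lemma~\ref{L:irred}(iii): $\fT_\fa$ contains $\fK(\H_\fa)$, and the latter is non-commutative as soon as $\dim\H_\fa>1$. So if the identity representation were a boundary representation, then $\rC^*_e(\A_\fa)\cong\fT_\fa$ would fail to be commutative.

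For $(iii)\Rightarrow(i)$, however, your plan is significantly more complicated than it needs to be, and in places misidentifies what the available tools actually say. You propose to take a non-multiplicative u.c.p.\ extension $\Phi$, analyze it via Stinespring, and run a dichotomy on whether $\fT_\fa/\fK(\H_\fa)$ is commutative. None of this is necessary. The paper's argument is two lines, and the key tool you are missing is Arveson's boundary theorem, stated as Theorem~\ref{T:bdrythm}: since $\fT_\fa\supset\fK(\H_\fa)$ (Lemma~\ref{L:irred}), the identity representation fails to be a boundary representation for $\A_\fa$ \emph{if and only if} the quotient map $q:\fT_\fa\to\O_\fa$ is completely isometric on $\A_\fa$. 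That is the translation step you were looking for, and it is already packaged for you.

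Once $q$ is completely isometric on $\A_\fa$, Theorem~\ref{T:Aaessnorm} finishes the job immediately. You phrased that result as a conditional (``if the essential norm agrees with the sup norm, then \ldots''), but its content is stronger: for $\dim\H_\fa>1$ it gives \emph{unconditionally}, at every matrix level $n$, that
\[
\|q^{(n)}(A)\|=\|g_{\fT_\fa}^{(n)}(A)\|=\|g_{q(\A_\fa)}^{(n)}(q^{(n)}(A))\|
\]
for $A\in\bM_n(\A_\fa)$. Combining this with $\|A\|=\|q^{(n)}(A)\|$ from the boundary theorem yields $\|A\|=\|g_{\fT_\fa}^{(n)}(A)\|\leq\|g_{\A_\fa}^{(n)}(A)\|\leq\|A\|$, so the Gelfand transform of $\A_\fa$ is completely isometric. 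No Stinespring analysis, no commutativity dichotomy on $\O_\fa$, and no separate argument for ``upgrading to complete isometry'' is required---the matricial uniformity is built into both Theorem~\ref{T:bdrythm} and Theorem~\ref{T:Aaessnorm}.
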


We note here that the case where $\H$ is the Hardy space on the unit disc was investigated by Arveson, who completely characterized the ideals $\fa$ for which $\A_\fa$ has a completely isometric Gelfand transform. The details appear in Example \ref{E:HardyAaGelf} below. We show in Example \ref{E:DAAaGelf} that our general situation is more complicated. 

Arveson's notion of hyperrigidity for operator algebras is also relevant for our purposes in Section \ref{S:Gelfbdry}. Determining whether an operator algebra is hyperrigid is typically quite difficult, as it requires checking that every representation of the generated $\rC^*$-algebra has a certain unique extension property. Nevertheless, hyperrigidity of operator algebras has generated significant research activity recently (see for instance \cite{kennedyshalit2015},\cite{DK2016},\cite{CH2018},\cite{clouatre2018unp},\cite{clouatre2018lochyp},\cite{kim2019},\cite{salomon2019} and the references therein) prompted by a compelling yet still unresolved conjecture of Arveson \cite{arveson2011}. 
It is therefore noteworthy that our approach also allows us to characterize when the algebra $\A_\fa$ is hyperrigid
(see Corollary \ref{C:idbdryessnorm}).

\begin{theorem}\label{T:main2}
Let $\H$ be a maximal  regular unitarily invariant complete Nevanlinna--Pick space. Let $\fa\subset \M(\H)$ be a proper weak-$*$ closed ideal. The following statements are equivalent.
\begin{enumerate}[{\rm (i)}]
 \item The algebra $\A_\fa$ is hyperrigid in $\fT_\fa$.

 \item The $d$-tuple $(\fz_1,\ldots,\fz_d)$ is essentially normal and the Gelfand transform of $\A_\fa$ is not completely isometric.
\end{enumerate}\end{theorem}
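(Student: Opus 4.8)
The plan is to reduce the statement to a cleaner characterization of hyperrigidity and then feed that into Theorem~\ref{T:main1}. Precisely, I would prove that $\A_\fa$ is hyperrigid in $\fT_\fa$ if and only if $(\fz_1,\ldots,\fz_d)$ is essentially normal \emph{and} the identity representation of $\fT_\fa$ is a boundary representation for $\A_\fa$; since Theorem~\ref{T:main1} says (for $\dim\H_\fa>1$) that the latter clause is equivalent to the Gelfand transform of $\A_\fa$ failing to be completely isometric, this yields the theorem. So assume $\dim\H_\fa>1$, the remaining cases being degenerate. From Section~\ref{S:kernels} and the spectral analysis of Section~\ref{S:Gelfbdry} I take as known: $\fT_\fa\supseteq\K(\H_\fa)$; the identity representation of $\fT_\fa$ on $\H_\fa$ is irreducible; $I-\sum_j\fz_j\fz_j^*$ is compact; and under the quotient map $q\colon\fT_\fa\to\fT_\fa/\K(\H_\fa)$ one has $\fT_\fa/\K(\H_\fa)=\rC^*(q(\fz_1),\ldots,q(\fz_d))$, which is commutative exactly when $(\fz_j)$ is essentially normal; in that commutative case it is $*$-isomorphic to $C(\sigma)$ with $\sigma\subseteq\bS_d$ the support of $\fa$ (Theorem~\ref{T:specAaTa}), and the image $\A_\fa^e:=q(\A_\fa)$ is the uniform closure of $\bC[z_1,\ldots,z_d]|_\sigma$.

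The technical core will be the assertion that the $\rC^*$-envelope of $\A_\fa^e$ is always commutative, whether or not $(\fz_j)$ is essentially normal. Because $\sum_jq(\fz_j)q(\fz_j)^*=1$, the commuting tuple $(q(\fz_1)^*,\ldots,q(\fz_d)^*)$ is a spherical isometry, hence (Athavale) subnormal, with minimal normal extension a spherical unitary $N=(N_1,\ldots,N_d)$ on a space $L'\supseteq L$ for which $L$ is invariant and $N_j|_L=q(\fz_j)^*$. The restriction map $p\mapsto p(N)|_L$ from the closed algebra generated by $N$ onto the closed algebra generated by $(q(\fz_j)^*)$ is then completely isometric: one direction is automatic (restriction to an invariant subspace), and the other follows from the spectral inclusions $\sigma(N)\subseteq\sigma(q(\fz_1)^*,\ldots,q(\fz_d)^*)\subseteq\widehat{\sigma(N)}$ together with the observation that, by minimality of $N$, evaluation at each point of $\sigma(N)\subseteq\bS_d$ descends to a character of the algebra generated by $(q(\fz_j)^*)$. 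Since the algebra generated by $N$ lies inside the commutative $\rC^*$-algebra $\rC^*(N)=C(\sigma(N))$, its $\rC^*$-envelope is commutative; transporting through the conjugate-linear complete isometry $a\mapsto a^*$, which identifies this algebra with $\A_\fa^e$, shows $\rC^*_{\mathrm e}(\A_\fa^e)$ is commutative.

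Granting this, the equivalence follows. If $\A_\fa$ is hyperrigid then every irreducible representation of $\fT_\fa$ is a boundary representation for $\A_\fa$ (Arveson), so in particular the identity representation is. Moreover hyperrigidity passes to $\rC^*$-quotients (pull back the representation through $q$ and transfer the unique extension property), so $\A_\fa^e$ is hyperrigid in $\fT_\fa/\K(\H_\fa)$; hence every irreducible representation of $\fT_\fa/\K(\H_\fa)$ is a boundary representation for $\A_\fa^e$, and, every boundary representation factoring through the commutative $\rC^*_{\mathrm e}(\A_\fa^e)$, is one-dimensional. A $\rC^*$-algebra with only one-dimensional irreducible representations is commutative, so $\fT_\fa/\K(\H_\fa)$ is commutative and $(\fz_j)$ is essentially normal. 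Conversely, suppose $(\fz_j)$ is essentially normal and the identity representation $\pi_0$ is a boundary representation for $\A_\fa$. Then $\fT_\fa/\K(\H_\fa)\cong C(\sigma)$ with $\sigma\subseteq\bS_d$, and each $x\in\sigma$ is a peak point of $\A_\fa^e$, peaked by $z\mapsto(1+\langle z,x\rangle)/2$, so $\delta_x\circ q$ is a boundary representation of $\fT_\fa$ for $\A_\fa$. Any representation of $\fT_\fa$ decomposes as a multiple of $\pi_0$, on the subspace where $\K(\H_\fa)$ acts nondegenerately, together with a representation factoring through $C(\sigma)$, that is a direct integral of the $\delta_x\circ q$; since the class of representations having the unique extension property for $\A_\fa$ is closed under direct sums and direct integrals (Arveson), every representation of $\fT_\fa$ has that property, i.e.\ $\A_\fa$ is hyperrigid. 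Combining this characterization with Theorem~\ref{T:main1} proves the result.

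The main obstacle is the second paragraph: recognizing the Calkin-algebra images of the coordinate multipliers as the adjoint of a subnormal (spherical-isometric) tuple via compactness of the defect $I-\sum_j\fz_j\fz_j^*$, and then controlling its polynomial functional calculus finely enough to force commutativity of $\rC^*_{\mathrm e}(\A_\fa^e)$. This is exactly where essential normality becomes entangled with boundary behaviour, and it is the mechanism behind the connection to \cite{kennedyshalit2015}. The remaining ingredients --- the structure of $\fT_\fa$ and the identification of $\fT_\fa/\K(\H_\fa)$ in the essentially normal case (Section~\ref{S:Gelfbdry}), and the permanence of hyperrigidity and of the unique extension property under $\rC^*$-quotients, direct sums, and direct integrals (\cite{arveson2011}) --- are routine by comparison.
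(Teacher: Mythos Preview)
Your overall strategy matches the paper's (Corollary~\ref{C:idbdryessnorm}): prove that hyperrigidity is equivalent to essential normality together with the identity representation being a boundary representation, and then feed this into Theorem~\ref{T:main1}. Your forward direction and your ``technical core'' paragraph are essentially the paper's arguments. The Athavale dilation you sketch is exactly what underlies Theorem~\ref{T:Aaessnorm}, and combining it with Theorem~\ref{T:GTC*env} gives the commutativity of $\rC^*_e(\A_\fa^e)$; your route via ``every boundary representation factors through the $\rC^*$-envelope, hence is one-dimensional'' is a minor rephrasing of the paper's ``$\rC^*_e(q(\A_\fa))\cong \O_\fa$ is commutative.''

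The backward direction, however, has a genuine gap. You infer that $\delta_x\circ q$ is a boundary representation for $\A_\fa\subset\fT_\fa$ from the fact that $x$ is a peak point for $\A_\fa^e\subset C(\sigma)$. The peak-point argument gives the unique extension property for $\delta_x$ only relative to $C(\sigma)$, not relative to $\fT_\fa$: your peaking element $p_x(Z_\fa)=\tfrac12(I+\sum_j\overline{x_j}\,\fz_j)$ has norm $1$ in $C(\sigma)$, but there is no reason for $\|p_x(Z_\fa)\|_{\fT_\fa}=1$. Indeed, for the spaces $\H_\nu$ with $0<\nu<1$ one has $\|\fz_1\|\geq 1/\sqrt{\nu}>1$ (cf.\ the proof of Theorem~\ref{T:KST}), so the standard peaking mechanism does not force a state on $\fT_\fa$ to coincide with $\delta_x\circ q$. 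Secondly, you invoke closure of the unique extension property under direct integrals, citing \cite{arveson2011}; that paper establishes closure under direct \emph{sums}, not direct integrals, and this stronger closure is not available off the shelf.

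The paper closes both holes at once with Lemma~\ref{L:sphunitUEPk}: if $\sigma$ is any unital $*$-representation of $\O_\fa$, then $(\sigma\circ q)(\fz_1),\ldots,(\sigma\circ q)(\fz_d)$ is a spherical unitary, and the lemma immediately yields the unique extension property for $\sigma\circ q$ with respect to $\A_\fa$. Notice that this step uses maximality of $\H$ through $\sum_n b_n=1$; your backward argument never invokes maximality, which is a signal that something is missing. Replace your peak-point and direct-integral paragraph by an appeal to Lemma~\ref{L:sphunitUEPk} and the proof goes through.
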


 This theorem and its proof are inspired by \cite[Theorem 4.12]{kennedyshalit2015} which focuses on the Drury--Arveson space. 
It should be noted that the results of \cite{kennedyshalit2015, kennedyshalit2015corr} hold for more general spaces than the Drury--Arveson space, see Section 5 therein. However, as explained on page 1731 of \cite{CH2018}, the overlap with our present setting consists \emph{only} of the Drury--Arveson space.

In the second part of Section \ref{S:Gelfbdry}, we turn our attention to the whole algebra $\M_\fa$, and analyze when the identity representation of $\rC^*(\M_\fa)$ is a boundary representation for it. We obtain an analogue of Theorem \ref{T:main1} in this context (Corollary \ref{C:Maunifalg}).
Examples \ref{E:Mdcompactquotient},  \ref{E:HardyMaGelf}, \ref{E:DAMaGelf} and \ref{E:commHinf} put the various conditions therein in perspective.

In Section \ref{S:polyapprox}, we find conditions under which the identity representation of $\fT_\fa$ necessarily is a boundary representation for $\A_\fa$, or equivalently, under which the Gelfand transform of $\A_\fa$ is not completely isometric. For instance, we show that this is almost always the case when $\fa$ is a homogeneous ideal (Theorem \ref{T:homogquotientGelf}), unless $\A_\fa$ can be identified with the classical disc algebra. Another instance where this can be verified is when the reproducing kernel of the space $\H$ fails to be maximal (Theorem \ref{T:nonmax}). For the rest of the section, we focus on the maximal situation and seek to exploit some analytic boundary assumptions on the so-called support of the ideal $\fa$, which was introduced in \cite{CT2019spectrum}. Given a compact subset $K\subset \bC^d$, we say that it is an \emph{approximation set} if the polynomials are uniformly dense in the continuous functions on $K$. The following is Corollary \ref{C:idbdryPKCK2}.

\begin{theorem}\label{T:main3}
Let $\H$ be a maximal regular unitarily invariant complete Nevanli\-nna--Pick space.
Let $\fa\subset \M(\H)$ be a proper weak-$*$ closed ideal such that $\H_\fa$ has dimension greater than one and $\supp\fa\cap \bS_d$ is an approximation set. Then, the Gelfand transform of $\A_\fa$ is not completely isometric.
\end{theorem}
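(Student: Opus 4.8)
The plan is to deduce Theorem~\ref{T:main3} from Theorem~\ref{T:main1} by verifying condition~(iii) of that theorem under the approximation-set hypothesis, i.e.\ to show that \emph{if} $\supp\fa\cap\bS_d$ is an approximation set, then the identity representation of $\fT_\fa$ is \emph{not} a boundary representation for $\A_\fa$, which by the already-established equivalence is the same as saying that the Gelfand transform of $\A_\fa$ is not completely isometric. Since $\dim\H_\fa>1$ is assumed, Theorem~\ref{T:main1} applies verbatim, so everything reduces to a single structural claim about $\fT_\fa=\rC^*(\A_\fa)$ when the support meets the sphere in an approximation set.

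First I would recall from Section~\ref{S:Gelfbdry} the description of the character spaces of $\A_\fa$ and $\fT_\fa$ (Theorem~\ref{T:specAaTa}) and the essential-norm formula (Theorem~\ref{T:Aaessnorm}); the latter expresses the essential norm of an element of $\A_\fa$ in terms of its behaviour on $\supp\fa\cap\bS_d$ (the part of the support lying on the boundary sphere). The point of the approximation-set hypothesis is that uniform density of polynomials in $C(\supp\fa\cap\bS_d)$ forces the natural restriction map from $\A_\fa$ (equivalently, from the closure of the polynomials) onto $C(\supp\fa\cap\bS_d)$ to be a \emph{complete quotient} map with no room for a genuinely non-commutative boundary piece: the essential-norm formula then says that the quotient of $\fT_\fa$ by its commutator ideal, or by the compacts, is already $C(\supp\fa\cap\bS_d)$, so every element of $\A_\fa$ attains its essential norm through a one-dimensional (character) compression. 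Concretely, I would argue that the identity representation, composed with the quotient by the ideal of compact operators, factors through $C(\supp\fa\cap\bS_d)$, and that a completely positive map agreeing with the identity on $\A_\fa$ can be produced that is \emph{not} multiplicative, using that on an approximation set the polynomial algebra is already a uniform algebra whose Shilov boundary is the whole set — hence the unique extension property fails and the identity representation is not a boundary representation.

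The key steps, in order, are: (1) invoke Theorem~\ref{T:specAaTa} to identify $\Delta(\A_\fa)$ and to locate $\supp\fa\cap\bS_d$ inside it; (2) use Theorem~\ref{T:Aaessnorm} to rewrite essential norms of elements of $\A_\fa$ as supremum norms over $\supp\fa\cap\bS_d$; (3) observe that the approximation-set hypothesis means the uniform closure of $\{p|_{\supp\fa\cap\bS_d}:p\text{ polynomial}\}$ is all of $C(\supp\fa\cap\bS_d)$, so the image of $\A_\fa$ in the Calkin algebra generates a \emph{commutative} $\rC^*$-algebra isometrically isomorphic to $C(\supp\fa\cap\bS_d)$; (4) build, or quote from Section~\ref{S:OA}, a unital completely contractive but non-completely-isometric map (equivalently a unital completely positive non-multiplicative extension of $\id_{\A_\fa}$) using this commutative quotient, thereby showing the identity representation fails the unique extension property; (5) conclude via Theorem~\ref{T:main1} that the Gelfand transform of $\A_\fa$ is not completely isometric. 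Several of these steps are essentially bookkeeping once the essential-norm formula is in hand.

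The main obstacle I anticipate is step~(3)--(4): the passage from ``polynomials are uniformly dense on $\supp\fa\cap\bS_d$'' to ``the relevant quotient of $\fT_\fa$ is genuinely commutative and absorbs all the essential-norm data'', because one must be careful that the essential-norm formula from Theorem~\ref{T:Aaessnorm} really does see \emph{only} the boundary sphere part of the support and not some further non-commutative contribution coming from the interior of $\supp\fa$ or from multiplicity. I would address this by first treating the cyclic case (where $\H_\fa$ restricted to the relevant summand is a compression onto a space with a well-understood boundary behaviour) and then bootstrapping to the general weak-$*$ closed ideal via the spectrum computation; if a direct argument is awkward, the fallback is to exhibit explicitly a two-dimensional compression of some element of $\A_\fa$ that witnesses complete contractivity of the Gelfand transform failing to be isometric, using two distinct points of $\supp\fa\cap\bS_d$ and the fact that polynomials separate them while the character compressions cannot reconstruct the full operator norm. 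In any case the conceptual content is entirely carried by Theorems~\ref{T:specAaTa}, \ref{T:Aaessnorm} and~\ref{T:main1}, and the approximation-set hypothesis is exactly what is needed to trigger them.
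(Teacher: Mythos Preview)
There is a fundamental misreading of Theorem~\ref{T:main1} (equivalently Theorem~\ref{T:unifquotientAa}) at the very start of your plan. That theorem says the Gelfand transform of $\A_\fa$ is completely isometric \emph{if and only if} the identity representation of $\fT_\fa$ is \emph{not} a boundary representation. Hence, to prove the Gelfand transform is \emph{not} completely isometric you must show the identity representation \emph{is} a boundary representation --- equivalently, via Theorem~\ref{T:bdrythm}, that the quotient map $q:\fT_\fa\to\O_\fa$ is \emph{not} completely isometric on $\A_\fa$. Your steps (3)--(4), by contrast, aim to produce a non-multiplicative completely positive extension of $\id_{\A_\fa}$, i.e.\ to show the unique extension property \emph{fails}. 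That is precisely the wrong direction: if it succeeded it would establish that the Gelfand transform \emph{is} completely isometric, the opposite of the theorem.

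Your observation that the approximation-set hypothesis forces $q(\A_\fa)$ to be self-adjoint (so $\O_\fa\cong C(\supp\fa\cap\bS_d)$) is essentially correct, but it does not by itself decide whether $q|_{\A_\fa}$ is completely isometric; commutativity of $\O_\fa$ is compatible with either outcome. The paper's argument (Corollary~\ref{C:idbdryPKCK2}) runs by contradiction in the correct direction: assume $q|_{\A_\fa}$ is completely isometric; then Theorems~\ref{T:specAaTa}(iv) and~\ref{T:Aaessnorm} make $K=\supp\fa\cap\bS_d$ a spectral set for the representation $\phi\mapsto\phi(Z_\fa)$ on $\H_\fa$ itself. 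The approximation-set hypothesis now upgrades this, via Lemma~\ref{L:spectralset}, to a $*$-homomorphism $C(K)\to B(\H_\fa)$, forcing $Z_\fa$ to be a spherical unitary with
\[
\sum_{n=1}^\infty b_n\sum_{|\alpha|=n}\frac{n!}{\alpha!}Z_\fa^\alpha Z_\fa^{\alpha*}=I.
\]
But Lemma~\ref{L:irred}(ii) identifies this operator with the non-zero rank-one operator $P_{\H_\fa}P_{\bC 1}|_{\H_\fa}$, which cannot equal $I$ when $\dim\H_\fa>1$. The key ingredient absent from your outline is exactly this: the approximation hypothesis is used not to display commutativity of a quotient, but to force the tuple $Z_\fa$ itself to be a spherical unitary, which is then ruled out by the non-trivial defect.
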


We recall that in one variable, a classical theorem of Lavrentiev states that if  $K\subset \bC$ is a compact subset, then the polynomials are dense in $C(K)$ if and only if $K$ has no interior and connected complement. A similar characterization of this density property in several variables appears to be a (difficult) open problem in approximation theory \cite{levenberg2006}. Some sufficient conditions are known however, and can be used to describe concrete situations where the previous theorem applies (see Corollary \ref{C:PKCK}). In Corollary \ref{C:KSMainC}, we adapt the arguments found in \cite{kennedyshalit2015corr} to sharpen Theorem \ref{T:main2} in the presence of some geometric restrictions on the zero set of the ideal $\fa$, and bring its statement in line with that found in \cite{kennedyshalit2015corr}.

Finally, in Section \ref{S:isol}, we seek to identify sufficient conditions on the ideal $\fa$ in order for the Gelfand transform of $\M_\fa$ not to be completely isometric. We show that the presence of some isolated points in the character space of $\M_\fa$ accomplishes this (Theorem \ref{T:shilovisolMa}).

\begin{theorem}\label{T:mainisol}
Let $\H$ be a  regular unitarily invariant complete Nevanlinna--Pick space. Let $\fa\subset \M(\H)$ be a proper weak-$*$ closed ideal whose support has an isolated point $\lambda\in \bB_d$. Then, the following statements hold.
\begin{enumerate}[{\rm (i)}]
\item The algebra $\M_\fa$ contains a non-zero idempotent finite-rank operator and the identity representation of $\rC^*(\M_\fa)$ is a boundary representation for $\M_\fa$. Moreover, the Gelfand transform of $\M_\fa$ is not completely isometric if $\H_\fa$ has dimension greater than one.

 \item If $\supp \fa$ consists of more than one point, then 
the Gelfand transform of $\M_\fa$ is not isometric. 
\end{enumerate}
\end{theorem}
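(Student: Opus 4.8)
The plan is to exploit the isolated point $\lambda$ to produce a finite-rank idempotent in $\M_\fa$, and then to extract both conclusions from it.

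For part (i), I would first observe that $\M_\fa$ is a commutative unital Banach algebra (being weak-$*$, hence norm, closed in $B(\H_\fa)$) and that, by the description of its character space in terms of $\supp\fa$ (the $\M_\fa$-counterpart of Theorem \ref{T:specAaTa}, as in \cite{CT2019spectrum}), the evaluation character $\chi_\lambda$ is an isolated point of $\Delta(\M_\fa)$, so that $\{\chi_\lambda\}$ is clopen. The Shilov idempotent theorem then yields an idempotent $e\in\M_\fa$ whose Gelfand transform is the indicator $\mathbf{1}_{\{\chi_\lambda\}}$, and we may write $e=P_{\H_\fa}M_\psi|_{\H_\fa}$ with $\psi\in\M(\H)$, $\psi(\lambda)=1$, and $\psi\equiv 0$ on $\supp\fa\setminus\{\lambda\}$. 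The decisive step is to show $e$ has finite rank. Since $\{\chi_\lambda\}$ is clopen, $\M_\fa$ splits as $e\M_\fa\oplus(1-e)\M_\fa$ and $\ran e$ is an $\M_\fa$-invariant subspace on which the compressed tuple $(\fz_1,\dots,\fz_d)$ has joint spectrum $\{\lambda\}$; I would argue that this ``piece of the quotient localized at the single interior point $\lambda$'' is finite-dimensional, using regularity of the kernel together with the analytic nature of $\supp\fa$ from \cite{CT2019spectrum} (concretely, that $\ran e$ is annihilated by a power of the maximal ideal of $\M(\H)$ at $\lambda$, or dually that $\ran e^{*}$ is the span of finitely many derivatives of $w\mapsto k_w$ at $w=\lambda$). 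Granting this, $ee^{*}\in\rC^*(\M_\fa)$ is a non-zero finite-rank operator, so $\rC^*(\M_\fa)\cap\K(\H_\fa)\neq\{0\}$; since the quotient map $\M_\fa\to\M_\fa/(\M_\fa\cap\K(\H_\fa))$ kills $e$ and is therefore not completely isometric, Arveson's boundary-representation criterion forces the identity representation of $\rC^*(\M_\fa)$ to be a boundary representation for $\M_\fa$. The statement on the Gelfand transform then follows, when $\H_\fa$ has dimension greater than one, from Corollary \ref{C:GTC*env} (equivalently its $\M_\fa$-incarnation, Corollary \ref{C:Maunifalg}); when $\dim\H_\fa=1$ the Gelfand assertion is vacuous and $e$ is the identity.

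For part (ii), note that if the Gelfand transform $g$ of $\M_\fa$ were isometric then $\|b\|=\|g(b)\|_\infty=r(b)$ for every $b\in\M_\fa$, where $r$ is the spectral radius. Apply this to the idempotent $e$ of part (i): every character takes the value $0$ or $1$ on $e$, and $e\neq 0$, so $r(e)=1$ and hence $\|e\|=1$; a non-zero idempotent of norm one on a Hilbert space is an orthogonal projection, so $e=e^{*}$. Now $e^{*}k_\lambda=\overline{\chi_\lambda(e)}\,k_\lambda=k_\lambda$, so $k_\lambda\in\ran e$. Choose $\mu\in\supp\fa$ with $\mu\neq\lambda$. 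If $\mu\in\bB_d$, then $k_\mu\in\H_\fa$ and $e^{*}k_\mu=\overline{\chi_\mu(e)}\,k_\mu=0$, so, as $e=e^{*}$, $k_\mu\in\ker e=(\ran e)^{\perp}$; hence $\inner{k_\lambda}{k_\mu}=0$. This contradicts $\inner{k_\lambda}{k_\mu}=k(\mu,\lambda)\neq 0$, the non-vanishing being a feature of complete Nevanlinna--Pick kernels, which are reciprocals of functions with non-negative coefficients. If instead every point of $\supp\fa$ other than $\lambda$ lies on $\bS_d$, I would run the same argument with $k_\mu$ replaced by a weak-$*$ limit of normalized reproducing kernels at points of $\bB_d$ approaching such a boundary point, using the defining property of $\supp\fa$ from \cite{CT2019spectrum}, again contradicting $e=e^{*}$. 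In either case $g$ fails to be isometric.

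The main obstacle is the finite-rank claim for $e$ in part (i): it is where the genuinely analytic input enters, namely that a quotient module localized at an isolated point of the ball is finite-dimensional; the remaining steps are comparatively formal. A secondary technical point is the limiting argument needed in part (ii) when the second point of the support lies on the sphere.
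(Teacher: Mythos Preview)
Your outline for part (i) is sound in spirit but leaves the crucial step unjustified. The paper obtains the idempotent not via Shilov but via the Taylor functional calculus: since $\supp\fa$ equals the Taylor spectrum of $Z_\fa$ \cite[Theorem 3.8]{CT2019spectrum}, the Riesz idempotent $E\in\{Z_\fa\}''$ for the clopen piece $\{\lambda\}$ exists, and then commutant lifting puts $E$ in $\M_\fa$. The key point you flag as ``the main obstacle'' is handled in the paper by two concrete ingredients you do not supply. First, \cite[Theorem 3.1 and Lemma 4.14]{CT2019spectrum} give that each $\fz_j|_{\ran E}-\lambda_j I$ is genuinely \emph{nilpotent} (not merely quasinilpotent); your phrase ``annihilated by a power of the maximal ideal'' is exactly this statement, but it requires proof and is where the analytic structure of the quotient is used. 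Second, the vector $P_{\H_\fa}1$ is cyclic for $Z_\fa$, hence $EP_{\H_\fa}1$ is cyclic for $Z_\fa|_{\ran E}$; a commuting nilpotent tuple with a cyclic vector lives on a finite-dimensional space. You do not mention cyclicity at all, and without it nilpotency alone does not give finite rank.

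For part (ii) your route diverges from the paper's and carries a real gap. The paper argues directly via the maximum modulus principle (Theorem \ref{T:maximummodulus}): the Shilov idempotent $\theta(Z_\fa)$ is not a scalar (since $\Delta(\M_\fa)$ has at least two points), so $\|\theta(Z_\fa)\|>|\theta(\lambda)|=1$, while every character value of $\theta(Z_\fa)$ has modulus $0$ or $1$; done. Your argument instead tries to force $e=e^*$ and derive a contradiction from kernel vectors. This works cleanly only when there is a second point $\mu\in\supp\fa\cap\bB_d$; your proposed ``limit of normalized kernels'' for a boundary point $\mu\in\bS_d$ is not a proof. A much shorter repair of your approach would be to observe that a self-adjoint idempotent in $\M_\fa$ is a reducing projection for $\A_\fa$, hence $0$ or $I$ by the irreducibility established in Lemma \ref{L:irred}(iii); since $e\neq 0$ and $e\neq I$ (there is another character), this already contradicts $e=e^*$ without any case analysis on where the second support point lies.
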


As an application, we obtain the following (Corollary \ref{C:sequenceMa}).

\begin{theorem}\label{T:main6}
Let $\H$ be a  regular unitarily invariant complete Nevanlinna--Pick space. Let $\fa\subset \M(\H)$ be a proper weak-$*$ closed ideal with the property that its zero set has an isolated point. 
Then, the following statements are equivalent.
 \begin{enumerate}
  \item[\rm{(i)}] The ideal $\fa$ is the vanishing ideal of a single point.
  
  \item[\rm{(ii)}] The space $\H_\fa$ is one-dimensional.
  
    \item[\rm{(iii)}] The Gelfand transform of $\M_\fa$ is completely isometric.
    
    \item[\rm{(iv)}] The Gelfand transform of $\M_\fa$ is  isometric. 
 \end{enumerate}
\end{theorem}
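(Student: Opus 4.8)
The plan is to prove the chain of implications (i)$\Rightarrow$(ii)$\Rightarrow$(iii)$\Rightarrow$(iv)$\Rightarrow$(ii)$\Rightarrow$(i), which simultaneously yields (ii)$\Leftrightarrow$(iii)$\Leftrightarrow$(iv) and (i)$\Leftrightarrow$(ii), hence the full equivalence. As a preliminary remark, I would note that the standing hypothesis is precisely what is needed to bring Theorem \ref{T:mainisol} to bear: since $\supp\fa\cap\bB_d$ coincides with the zero set $Z(\fa)=\{\mu\in\bB_d:\psi(\mu)=0\text{ for all }\psi\in\fa\}$ and $\bB_d$ is open, an isolated point $\lambda$ of $Z(\fa)$ is automatically an isolated point of $\supp\fa$ lying in $\bB_d$.

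Three of the implications should be routine. For (i)$\Rightarrow$(ii): if $\fa$ is the vanishing ideal of a point $\mu\in\bB_d$, then $z_1-\mu_1,\ldots,z_d-\mu_d\in\fa$, so $[\fa\H]$ contains $\overline{\sum_{j=1}^d(z_j-\mu_j)\H}=(\bC k_\mu)^\perp$, while $[\fa\H]\perp k_\mu$ (every $\psi\in\fa$ vanishes at $\mu$) gives the reverse inclusion; thus $\H_\fa=\bC k_\mu$ is one dimensional. For (ii)$\Rightarrow$(iii): a one dimensional $\H_\fa$ makes $\M_\fa$ a unital subalgebra of $B(\H_\fa)\cong\bC$, so $\M_\fa=\bC I_{\H_\fa}$, its character space is a single point, and its Gelfand transform is a unital $*$-isomorphism onto $\bC$, in particular completely isometric. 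And (iii)$\Rightarrow$(iv) is immediate.

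The heart of the matter is (iv)$\Rightarrow$(ii). Suppose the Gelfand transform of $\M_\fa$ is isometric. As $\fa$ is proper, $\supp\fa$ is nonempty, and Theorem \ref{T:mainisol}(ii), read contrapositively, prevents $\supp\fa$ from having more than one point; since $\lambda\in Z(\fa)\subseteq\supp\fa$, we conclude $\supp\fa=\{\lambda\}$ with $\lambda\in\bB_d$. Theorem \ref{T:specAaTa} then identifies the character space of $\A_\fa$; as $\supp\fa=\{\lambda\}$ it reduces to the single point $\lambda$, with associated character determined by $\fz_j\mapsto\lambda_j$. Every character of $\M_\fa$ restricts to a character of $\A_\fa$, so $\chi(\fz_j-\lambda_j I)=0$ for each $\chi\in\Delta(\M_\fa)$ and each $j$; hence the Gelfand transform annihilates $\fz_j-\lambda_j I$, and isometry forces $\fz_j=\lambda_j I_{\H_\fa}$ for all $j$. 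Because $\H_\fa$ is coinvariant for the operators $M_{z_j}$, this says $\H_\fa\subseteq\bigcap_{j=1}^d\ker(M_{z_j}^*-\overline{\lambda_j}I)$, and for these spaces this joint eigenspace is the one dimensional space $\bC k_\lambda$; since $\fa$ is proper, $\H_\fa\neq\{0\}$, so $\H_\fa=\bC k_\lambda$, which is (ii).

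It remains to close the loop with (ii)$\Rightarrow$(i). If $\dim\H_\fa=1$, coinvariance shows that a unit spanning vector of $\H_\fa$ is a joint eigenvector of $(M_{z_1}^*,\ldots,M_{z_d}^*)$, hence a scalar multiple of $k_\mu$ for some $\mu\in\bB_d$, so $\H_\fa=\bC k_\mu$. I would then invoke the recovery of a weak-$*$ closed ideal from its coinvariant subspace established in Section \ref{S:kernels}, namely $\fa=\{\psi\in\M(\H):M_\psi^*|_{\H_\fa}=0\}$ (equivalently, the commutant-lifting identification $\M_\fa\cong\M(\H)/\fa$), together with the relation $M_\psi^*k_\mu=\overline{\psi(\mu)}\,k_\mu$, to deduce that $\fa=\{\psi\in\M(\H):\psi(\mu)=0\}$ is the vanishing ideal of $\mu$. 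The step I expect to be the main obstacle is precisely this recovery statement in the present generality, since $\M_\fa$ a priori only detects the largest weak-$*$ closed ideal with a given coinvariant subspace; everything else reduces to Theorem \ref{T:mainisol}, Theorem \ref{T:specAaTa}, and standard reproducing kernel facts.
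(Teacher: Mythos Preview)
Your argument is essentially correct, and your route for (iv)$\Rightarrow$(ii) is genuinely different from the paper's. Two remarks.

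First, your (ii)$\Rightarrow$(i) contains a small gap. You assert that a joint eigenvector of $M_z^*$ spanning $\H_\fa$ must be a multiple of $k_\mu$ for some $\mu\in\bB_d$, but the theorem does not assume maximality, and in the non-maximal case $M_z^*$ admits joint eigenvectors at points $\mu\in\bS_d$ (when $\sum_n a_n<\infty$, the formal kernel at a sphere point is an honest vector in $\H$). The repair is immediate from the hypothesis you already have: since $\lambda\in Z(\fa)$ we get $k_\lambda\in\H_\fa$, so if $\dim\H_\fa=1$ then $\H_\fa=\bC k_\lambda$ with $\lambda\in\bB_d$, and Theorem~\ref{T:lattice} then yields $\fa=\{\psi:\psi(\lambda)=0\}$. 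The paper instead uses Lemma~\ref{L:properideal}: one-dimensionality of $\H_\fa$ forces $\fa$ to be maximal, and $\fa\subset\ker\tau_\lambda$ gives $\fa=\ker\tau_\lambda$. Incidentally, your stated worry about the ``recovery statement'' is misplaced: Theorem~\ref{T:lattice} says precisely that $\fa=\iota([\fa\H])$, so $\fa$ is uniquely determined by $\H_\fa$ and there is nothing to resolve.

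Second, for (iv)$\Rightarrow$(ii) the paper takes a different path. After reaching $\supp\fa=\{\lambda\}\subset\bB_d$, it invokes Lemma~\ref{L:DeltaMainterior}, a general fact (built on the maximum modulus principle, Theorem~\ref{T:maximummodulus}) that the Gelfand transform of $\M_\fa$ is never isometric when $\supp\fa\subset\bB_d$ and $\dim\H_\fa>1$. Your argument is more elementary and self-contained here: once $\supp\fa=\{\lambda\}$, the very definition of $\supp\fa$ gives $\chi(\fz_j)=\lambda_j$ for every $\chi\in\Delta(\M_\fa)$ (so the detour through $\A_\fa$ and Theorem~\ref{T:specAaTa} is unnecessary), isometry of the Gelfand transform forces $\fz_j=\lambda_j I$, and the joint eigenspace computation finishes. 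The paper's route isolates a reusable lemma; yours bypasses the maximum modulus machinery entirely.
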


This result is consistent with the findings of \cite[Theorem 4.2]{clouatre2015CB} for the classical Hardy space on the unit disc, in which case the statement of the previous theorem is valid for any weak-$*$ closed ideal. Whether or not this stronger statement holds in the generality discussed here calls for further investigation.


\section{Operator algebras, boundary representations and $\rC^*$-envelopes}\label{S:OA}

Let $B(\H)$ denote the $\rC^*$-algebra of bounded linear operators on some Hilbert space $\H$. By an \emph{operator algebra} we simply mean a subalgebra $\A\subset B(\H)$. We denote by $\A'$ the commutant of $\A$, so that
\[
 \A'=\{X\in B(\H):Xa=aX, \quad a\in \A\}.
\]
For each $n\geq 1$, we let $\bM_n(\A)$ denote the algebra of $n\times n$ matrices with entries in $\A$. When $\A=\bC$, we write 
$\bM_n$ instead of $\bM_n(\bC)$. Generally, we view $\bM_n(\A)$ as a subalgebra of $B(\H^{(n)})$ in the obvious way (where $\H^{(n)}$ is the $n$-fold direct sum of $\H$), and thus obtain a norm on $\bM_n(\A)$.
If $\E$ is a Hilbert space and $\tau:\A\to B(\E)$ is a linear map, then for every $n\geq 1$ we have an induced linear map $\tau^{(n)}:\bM_n(\A)\to B(\E^{(n)})$ defined as
\[
 \tau^{(n)}([a_{ij}])=[\tau(a_{ij})], \quad [a_{ij}]\in \bM_n(\A).
\]
The map $\tau$ is said to be \emph{completely contractive} or \emph{completely isometric} whenever $\tau^{(n)}$ is contractive or isometric, respectively, for each positive integer $n$. The reader may consult \cite{paulsen2002} for more details on the topic. 

Starting with a unital operator algebra $\A\subset B(\H)$, we can consider the $\rC^*$-algebra $\rC^*(\A)$ that it generates inside of $B(\H)$. However, this $\rC^*$-algebra is typically not invariant under completely isometric isomorphisms of $\A$. In an attempt to construct the smallest possible $\rC^*$-algebra generated by a completely isometric copy of $\A$, Arveson introduced in \cite{arveson1969} non-commutative versions of the Choquet and Shilov boundaries for uniform algebras. We briefly recall some aspects of this construction, and the reader may consult \cite[Chapter 15]{paulsen2002},\cite[Chapter 4]{BLM2004}, or \cite{arveson2008} for details.

Let $\pi:\rC^*(\A)\to B(\H_\pi)$ be a unital $*$-homomorphism. The restriction $\pi|_\A$ is a unital completely contractive map on $\A$, and by virtue of Arveson's extension theorem it thus admits an extension to a unital completely contractive map on $\rC^*(\A)$. Of course, $\pi$ itself is such an extension, but there may well be others in general. Accordingly, we say that $\pi$ has the \emph{unique extension property} with respect to $\A$ if $\pi$ is the unique completely contractive linear extension of $\pi|_\A$ to $\rC^*(\A)$. The following result is found in \cite[Theorem 3.4]{CH2018}  and is based on a result from \cite{richter2010}; it gives a simple concrete criterion that detects the unique extension property. In the statement, any infinite sum is  to be interpreted as being convergent in the strong operator topology.

\begin{theorem}\label{T:sphunitUEP}
 Let $\H$ be a Hilbert space and let $\F\subset B(\H)$ be a countable collection of commuting operators with the property that
 $\sum_{T\in \F}TT^*\leq I.$ Let $\A\subset B(\H)$ denote the unital norm-closed algebra generated by $\F$. Let $\rho:\A\to B(\H_\rho)$ be a unital completely contractive homomorphism with the property that $\rho(T)$ is normal for every $T\in \F$ and
 \[
  \sum_{T\in \F}\rho(T)\rho(T)^*=I.
 \]
Then, there is a unital $*$-homomorphism $\pi:\rC^*(\A)\to B(\H_\rho)$ with the unique extension property with respect to $\A$ such that $\pi|_\A=\rho$.
\end{theorem}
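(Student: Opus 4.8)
The plan is to reduce the statement to a single claim: \emph{every unital completely positive map $\Phi\colon\rC^*(\A)\to B(\H_\rho)$ with $\Phi|_\A=\rho$ is automatically a $*$-homomorphism.} Granting this, existence of at least one such $\Phi$ is Arveson's extension theorem; any such $\Phi$ is then a $*$-representation of $\rC^*(\A)$ on $\H_\rho$ extending $\rho$; and two $*$-representations of $\rC^*(\A)$ that agree on $\F$ agree on $\F^*$, hence on the $*$-algebra generated by $\F$, hence by continuity on all of $\rC^*(\A)$, so such a $\Phi$ is unique. Uniqueness among unital completely contractive extensions of $\rho|_\A$ is precisely the unique extension property, so I would take $\pi:=\Phi$.

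To prove the reduced claim, fix $\Phi$ and first carry out an easy ``pinching'' step. Since $\Phi$ is $2$-positive, the Kadison--Schwarz inequality applied to $T^*$ gives $\rho(T)\rho(T)^*=\Phi(T)\Phi(T)^*\le\Phi(TT^*)$ for every $T\in\F$. Summing over finite subsets of $\F$ and using $\sum_{T\in\F}TT^*\le I$ yields $\sum_{T\in\F}\Phi(TT^*)\le\Phi(I)=I$; combined with $\sum_{T\in\F}\rho(T)\rho(T)^*=I$ and the positivity of each difference $\Phi(TT^*)-\rho(T)\rho(T)^*$, this forces $\Phi(TT^*)=\Phi(T)\Phi(T)^*$ for every $T\in\F$. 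By the standard theory of multiplicative domains, each $T\in\F$ then lies in the left multiplicative domain of $\Phi$, i.e.\ $\Phi(Tb)=\Phi(T)\Phi(b)$ for all $b\in\rC^*(\A)$. The only substantial point remaining is to show $\Phi(T^*T)=\rho(T)^*\rho(T)$ for each $T\in\F$; once this holds, $T$ also lies in the right multiplicative domain, hence in the full multiplicative domain of $\Phi$, which is a $\rC^*$-subalgebra; since it contains $\F$ and the unit it equals $\rC^*(\A)$, so $\Phi$ is a $*$-homomorphism.

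For that last point I would pass to a minimal Stinespring dilation $(\sigma,\K)$ of $\Phi$, with $\H_\rho\subseteq\K$ and $\Phi(\cdot)=P_{\H_\rho}\sigma(\cdot)|_{\H_\rho}$. A short computation identifies $\Phi(TT^*)-\rho(T)\rho(T)^*$ with the positive operator $P_{\H_\rho}\sigma(T)(I-P_{\H_\rho})\sigma(T)^*|_{\H_\rho}$, so the pinching step is equivalent to $\sigma(T)^*\H_\rho\subseteq\H_\rho$ for every $T\in\F$: the subspace $\H_\rho$ is co-invariant for the commuting row contraction $(\sigma(T))_{T\in\F}$, and the compression of this tuple to $\H_\rho$ is $(\rho(T))_{T\in\F}$. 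Here the normality hypothesis enters decisively: by Fuglede--Putnam the commuting normal operators $\rho(T)$ generate a commutative $\rC^*$-algebra, and $\sum_{T\in\F}\rho(T)\rho(T)^*=I$ then forces $\sum_{T\in\F}\rho(T)^*\rho(T)=I$, so $(\rho(T))_{T\in\F}$ is a spherical unitary. I would then invoke the fact (this is the heart of \cite[Theorem 3.4]{CH2018}, resting on \cite{richter2010}) that a commuting row contraction whose compression to a co-invariant subspace is a spherical unitary must leave that subspace reducing: iterating co-invariance shows that $\H_\rho$ lies in the spherical summand of the Wold-type decomposition of $(\sigma(T))_{T\in\F}$, on which the relevant defect operators vanish by comparison with the normality of $(\rho(T))_{T\in\F}$. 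Then $\H_\rho$ reduces $\sigma$, and by minimality $\K=\H_\rho$ and $\Phi=\sigma$ is a $*$-homomorphism, completing the reduced claim and hence the theorem.

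The main obstacle is exactly this reducing step, equivalently the passage from $\Phi(TT^*)=\rho(T)\rho(T)^*$ to $\Phi(T^*T)=\rho(T)^*\rho(T)$: one cannot merely sum the inequality $\Phi(T^*T)\ge\Phi(T)^*\Phi(T)$, because $\sum_{T\in\F}T^*T$ need not be bounded, and it is the normality of the $\rho(T)$ — through Fuglede and the spherical-unitary structure it produces — that supplies the missing control. When $\F$ is a single operator this is transparent: $\rho(T)$ is then a normal co-isometry, hence a unitary, so for $\xi\in\H_\rho$ one has $\|\sigma(T)\xi\|^2=\|\rho(T)\xi\|^2+\|(I-P_{\H_\rho})\sigma(T)\xi\|^2=\|\xi\|^2+\|(I-P_{\H_\rho})\sigma(T)\xi\|^2$ while $\|\sigma(T)\xi\|\le\|\xi\|$, forcing $(I-P_{\H_\rho})\sigma(T)|_{\H_\rho}=0$; the many-variable case requires the finer dilation theory of spherical contractions cited above.
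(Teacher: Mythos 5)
Your proposal is correct, but note that the paper offers no proof of Theorem \ref{T:sphunitUEP} at all: it is imported verbatim from \cite[Theorem 3.4]{CH2018}, which in turn rests on \cite{richter2010}. What you have written is, in substance, a reconstruction of that cited proof: Arveson extension plus the observation that it suffices to show every unital completely positive extension $\Phi$ is multiplicative; the Kadison--Schwarz/multiplicative-domain pinching forced by $\sum_{T\in\F}TT^*\leq I$ and $\sum_{T\in\F}\rho(T)\rho(T)^*=I$, giving $\Phi(TT^*)=\Phi(T)\Phi(T)^*$; the translation via a minimal Stinespring dilation into co-invariance of $\H_\rho$ for the tuple $(\sigma(T))_{T\in\F}$; and the upgrade from co-invariance to reducibility using the rigidity of spherical unitaries under commuting row-contractive extensions, after which minimality collapses the dilation. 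You also correctly isolate the genuine difficulty: one cannot sum the column inequality because $\sum_{T\in\F}T^*T$ need not be bounded, so the reducing step really does require the Richter--Sundberg result. That step is the one thing you invoke rather than prove --- your Wold-decomposition heuristic for it is not an argument --- but since the paper itself cites the entire theorem, your treatment is at least as complete as the source, and the citation you point to is the right one.
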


If $\pi$ is an irreducible $*$-representation of $\rC^*(\A)$, then it is said to be a \emph{boundary representation} for $\A$ whenever it has the unique extension property with respect to $\A$. The collection of boundary representation is usually thought of as the \emph{non-commutative Choquet boundary} of $\A$. In identifying boundary representations, the following  general principle is often useful. It is entirely standard, but we record it here for ease of reference throughout the paper.

\begin{lemma}\label{L:compactsplit}
 Let $\A\subset B(\H)$ be a unital operator algebra such that $\rC^*(\A)$ contains the ideal $\fK(\H)$ of compact operators. Let $q:\rC^*(\A)\to \rC^*(\A)/\fK(\H)$ be the quotient map. Let $\pi:\rC^*(\A)\to B(\H_\pi)$ be a unital $*$-homomorphism. Then, there exists a closed subspace $\H_0\subset \H_\pi$ that is reducing for $\pi(\rC^*(\A))$ along with a cardinal number $\kappa$, a unitary operator $U:\H_0\to \H^{(\kappa)}$ and a unital $*$-homomorphism $\sigma:\rC^*(\A)/\fK(\H)\to B(\H_0^\perp)$ such that
\[
 \pi(X)=U^* X^{(\kappa)} U\oplus (\sigma \circ q)(X), \quad X\in \rC^*(\A).
\]
\end{lemma}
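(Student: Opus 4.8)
The plan is to use the standard structure theory of representations of a C*-algebra $\mathfrak{A}=\rC^*(\A)$ that contains the ideal $\fK(\H)$ of compact operators, and in particular to exploit the fact that $\fK(\H)$ has, up to unitary equivalence, a unique irreducible representation (the identity representation on $\H$). First I would restrict the given $*$-homomorphism $\pi$ to the ideal $\fK(\H)$ and let $\H_0\subset \H_\pi$ be the essential subspace of $\pi|_{\fK(\H)}$, i.e. the closure of $\pi(\fK(\H))\H_\pi$, equivalently the orthogonal complement of $\{\xi\in \H_\pi: \pi(k)\xi=0 \text{ for all } k\in\fK(\H)\}$. Standard arguments show $\H_0$ reduces $\pi(\fK(\H))$, and since $\fK(\H)$ is an ideal in $\mathfrak{A}$ and $\pi(\mathfrak{A})$ commutes with nothing in particular but does normalize $\pi(\fK(\H))$, the subspace $\H_0$ is in fact reducing for all of $\pi(\mathfrak{A})$: if $\xi$ is annihilated by $\pi(\fK(\H))$ and $X\in\mathfrak{A}$, then for $k\in\fK(\H)$ we have $\pi(k)\pi(X)\xi=\pi(kX)\xi=0$ since $kX\in\fK(\H)$, so $\pi(X)$ preserves $\H_0^\perp$; applying this to $X^*$ as well gives that $\H_0$ and $\H_0^\perp$ are both invariant.

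Next I would analyze the two summands separately. On $\H_0^\perp$, the representation $\pi|_{\fK(\H)}$ is zero, so $\pi$ vanishes on $\fK(\H)$ and therefore factors through the quotient map $q:\mathfrak{A}\to \mathfrak{A}/\fK(\H)$, yielding the unital $*$-homomorphism $\sigma:\mathfrak{A}/\fK(\H)\to B(\H_0^\perp)$ with $\sigma\circ q = \pi(\cdot)|_{\H_0^\perp}$. On $\H_0$, the restriction $\pi_0:=\pi|_{\H_0}$ is a nondegenerate representation of $\fK(\H)$ (after restricting the ideal), and by the classical structure theorem for representations of the compacts, $\pi_0|_{\fK(\H)}$ is unitarily equivalent to a multiple $\id_\H^{(\kappa)}$ of the identity representation for some cardinal $\kappa$; fix a unitary $U:\H_0\to \H^{(\kappa)}$ implementing this, so $U\pi(k)|_{\H_0}U^* = k^{(\kappa)}$ for all $k\in\fK(\H)$. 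It remains to see that the same $U$ intertwines $\pi(X)|_{\H_0}$ with $X^{(\kappa)}$ for \emph{every} $X\in\mathfrak{A}$, not just for compacts. This is where I expect the only real (though still routine) work: the point is that $U\pi(X)|_{\H_0}U^*$ is an operator on $\H^{(\kappa)}$ which, for every compact $k$, satisfies $\bigl(U\pi(X)|_{\H_0}U^*\bigr) k^{(\kappa)} = \bigl(U\pi(Xk)|_{\H_0}U^*\bigr) = (Xk)^{(\kappa)} = X^{(\kappa)} k^{(\kappa)}$ and similarly on the other side; since $\fK(\H)^{(\kappa)}$ acts nondegenerately on $\H^{(\kappa)}$ and $X^{(\kappa)}$ is determined by how it multiplies the compacts (a bounded operator $S$ on $\H^{(\kappa)}$ with $Sk^{(\kappa)}=k^{(\kappa)}S = 0$ for all $k$ must be $0$ by nondegeneracy, and more to the point $S k^{(\kappa)} = X^{(\kappa)}k^{(\kappa)}$ for all $k$ forces $S=X^{(\kappa)}$ on the dense range), we conclude $U\pi(X)|_{\H_0}U^* = X^{(\kappa)}$. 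Assembling the two pieces via the orthogonal decomposition $\H_\pi = \H_0\oplus \H_0^\perp$ gives exactly the asserted formula $\pi(X) = U^* X^{(\kappa)} U \oplus (\sigma\circ q)(X)$ for all $X\in\mathfrak{A}$.

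The main obstacle, such as it is, is purely bookkeeping: one must be careful that $\kappa$ is allowed to be an arbitrary (possibly infinite) cardinal and that the convergence of $X^{(\kappa)}$ on $\H^{(\kappa)}$ causes no trouble — it does not, since $X^{(\kappa)}$ is just the ampliation of a bounded operator and is itself bounded with the same norm. One should also note that the decomposition is canonical: $\H_0$ is forced to be the essential subspace of $\pi|_{\fK(\H)}$, so there is no choice involved beyond the unitary $U$. No deep input is needed beyond Arveson-extension-free, elementary C*-algebra facts (ideals and their essential subspaces, Wedderburn-type uniqueness of irreducible representations of $\fK(\H)$), which is why the excerpt rightly calls this ``entirely standard''. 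I would keep the written proof to about half a page, spending most of it on the verification that $\H_0$ reduces $\pi(\mathfrak{A})$ and on the nondegeneracy argument that upgrades the intertwining from $\fK(\H)$ to $\mathfrak{A}$.
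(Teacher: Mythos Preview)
Your proposal is correct and follows exactly the standard argument that the paper invokes by reference; indeed, the paper itself gives no proof beyond citing \cite{arveson1976inv} (the discussion on page 15 and Corollary 1 on page 20), and what you have written is precisely the content of that citation spelled out in detail.
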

\begin{proof}
	See the discussion at the bottom of page 15 along with Corollary 1 on page 20 in \cite{arveson1976inv}. 
 \end{proof}

When every unital $*$-representation of $\rC^*(\A)$ has the unique extension property with respect to $\A$, then we say that $\A$ is \emph{hyperrigid} in $\rC^*(\A)$. This property was introduced in \cite{arveson2011}, where it was conjectured that it holds whenever every irreducible $*$-representation of $\rC^*(\A)$ is a boundary representation for $\A$. This conjecture is still open at the time of this writing.

Regardless of the status of the hyperrigidity conjecture, boundary representations are known to always be plentiful: there always exists a set $\F$ of boundary representations for $\A$ with the property that the map $\bigoplus_{\pi\in \F}\pi$ is completely isometric on $\A$, as shown in \cite{dritschel2005},\cite{arveson2008},\cite{davidson2015}.  

More generally, let $\F$ be a set of unital $*$-representations of $\rC^*(\A)$ that all have the unique extension property with respect to $\A$, and such that the map $\Pi_\F=\bigoplus_{\pi\in \F}\pi$ is completely isometric on $\A$. The set $\F$ can be used to construct a non-commutative version of the Shilov boundary for $\A$. 
We define the \emph{$\rC^*$-envelope} of $\A$ to be
\[
 \rC^*_e(\A)=\Pi_\F(\rC^*(\A)).
\]
Note that $\ker \Pi_\F$ is a closed two-sided ideal of $\rC^*(\A)$ for which \[\rC^*_e(\A)\cong\rC^*(\A)/\ker \Pi_\F\] and that the associated quotient map $\rC^*(\A)\to \rC^*(\A)/\ker \Pi_\F$ is completely isometric on $\A$. (Here and throughout, the symbol $\cong$ denotes the relation of $*$-isomorphism.)
We often identify $\rC^*_e(\A)$ with $\rC^*(\A)/\ker \Pi_\F$.

It can be verified that  $\rC^*_e(\A)$ is invariant under unital completely isometric isomorphisms of $\A$ \cite[Theorem 2.1.2]{arveson1969}. Furthermore, the $\rC^*$-envelope can be interpreted as the essentially unique smallest $\rC^*$-algebra generated by a copy of $\A$, in the sense that it is a quotient of every other such $\rC^*$-algebra \cite[Theorem 4.1]{dritschel2005}.
This object was first proven to exist by Hamana \cite{hamana1979} using different techniques.

When the identity representation of $\rC^*(\A)$ has the unique extension property with respect to $\A$, then the set $\F$ above can be chosen to simply consist of this single representation. So in this case we have $\rC^*_e(\A)\cong \rC^*(\A)$. We thus see that it is desirable to detect when the identity representation enjoys the unique extension property. For this purpose, the best tool is the following result, known as Arveson's \emph{boundary theorem} \cite[Theorem 2.1.1]{arveson1972}.

\begin{theorem}\label{T:bdrythm}
 Let $\A\subset B(\H)$ be a unital operator algebra such that $\rC^*(\A)$ contains the ideal $\fK(\H)$ of compact operators. Let $q:\rC^*(\A)\to \rC^*(\A)/\fK(\H)$ denote the quotient map. Then, the following statements are equivalent.
 \begin{enumerate}[{\rm (i)}]
  \item The identity representation of $\rC^*(\A)$ is a boundary representation for $\A$.
  
  \item The map $q$ is not completely isometric on $\A$.
 \end{enumerate}
\end{theorem}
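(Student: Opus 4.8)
The plan is to prove the two implications separately, after a reduction. First, since $\fK(\H)\subseteq\rC^*(\A)$ acts irreducibly on $\H$, we have $\rC^*(\A)'\subseteq\fK(\H)'=\bC I$, so the identity representation $\iota\colon\rC^*(\A)\hookrightarrow B(\H)$ is automatically irreducible. Hence (i) is equivalent to the assertion that $\iota$ has the unique extension property with respect to $\A$, and the whole theorem reduces to showing that this holds precisely when $q$ fails to be completely isometric on $\A$.

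For the implication (i)$\Rightarrow$(ii) I would argue the contrapositive: if $q|_\A$ is completely isometric, then its (completely contractive) inverse $q(\A)\to\A$ extends, by Arveson's extension theorem, to a unital completely contractive map $\theta\colon\rC^*(\A)/\fK(\H)\to B(\H)$. Then $\Phi:=\theta\circ q$ is a unital completely contractive extension of $\iota|_\A$ that annihilates $\fK(\H)$; since $\iota$ does not, $\Phi\neq\iota$, so $\iota$ lacks the unique extension property and is not a boundary representation. This direction is routine.

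The substance is (ii)$\Rightarrow$(i): assuming $q$ is not completely isometric on $\A$, I must show that every unital completely positive extension $\Phi$ of $\iota|_\A$ equals $\iota$ (this suffices, by the standard correspondence between completely contractive and completely positive extensions). I would pass to a minimal Stinespring dilation $\Phi(\cdot)=V^*\pi(\cdot)V$ and apply Lemma \ref{L:compactsplit} to split $\pi\cong\iota^{(\kappa)}\oplus(\sigma\circ q)$ on $\H_\pi=\H_0\oplus\H_0^\perp$, where the second summand annihilates the compacts. Writing $V$ with blocks $A=P_{\H_0}V$ and $B=P_{\H_0^\perp}V$, the goal becomes $B=0$: once the quotient summand is decoupled from $V\H$, the map $\Phi$ restricts to the identity on $\fK(\H)$, and then a short computation — using that each operator $\xi\xi^* X \eta\eta^*$ is a scalar multiple of $\xi\eta^*$, together with the fact that $\fK(\H)$ lies in the multiplicative domain of $\Phi$ — yields $\langle\Phi(X)\eta,\xi\rangle=\langle X\eta,\xi\rangle$ for all $X$ and all $\xi,\eta$, i.e. $\Phi=\iota$.

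The main obstacle is precisely showing that $B=0$, and this is where the hypothesis must be used. Choosing $n$ and $F\in\bM_n(\A)$ with $\delta:=\|F\|^2-\|q^{(n)}(F)\|^2>0$, the Kadison--Schwarz inequality $\Phi^{(n)}(F^*F)\geq\Phi^{(n)}(F)^*\Phi^{(n)}(F)=F^*F$ together with the split form of $\pi$ gives an operator inequality of the shape $F^*F\le\|F\|^2 I-\delta\,(B^{(n)})^*B^{(n)}$; testing against unit vectors that nearly attain $\|F\|$ forces $B^{(n)}$ to annihilate the directions in which $F$ attains its norm. The crux is to promote this local vanishing, as $F$ ranges over a completely norming family of matrices over $\A$, to the global statement $B=0$, using the minimality of the dilation and the irreducibility of $\iota$. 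I expect this decoupling step to be the hardest part; once it is in place, the finishing argument above closes the proof.
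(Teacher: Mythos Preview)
The paper does not actually prove Theorem~\ref{T:bdrythm}; it is stated with a citation to \cite[Theorem 2.1.1]{arveson1972} and used as a black box. So there is no ``paper's proof'' to compare against, and I will assess your argument on its own.

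Your contrapositive argument for (i)$\Rightarrow$(ii) is correct and standard.

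For (ii)$\Rightarrow$(i), the architecture (minimal Stinespring plus the splitting from Lemma~\ref{L:compactsplit}, then show the $\sigma\circ q$ block decouples) is sound, and your endgame---once $\fK(\H)$ is in the multiplicative domain, $\Phi=\iota$---is fine. The gap is exactly where you flag it: the passage from the inequality
\[
\delta\,(B^{(n)})^*B^{(n)}\ \le\ \|F\|^2 I - F^*F
\]
to $B=0$. This inequality only controls $B$ on vectors that (nearly) attain the norm of the single witness $F$; there is no reason those vectors span $\H^{(n)}$, and letting $F$ range over a ``completely norming family'' does not help, since each new $F$ comes with its own $\delta$ and its own norming directions, while $B$ is fixed. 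Irreducibility of $\A$ and minimality of the dilation do not by themselves promote this local information to $B=0$, because you have not shown that $\A$ (or $\fK(\H)$) lies in the multiplicative domain of $\Phi$, so $V\H$ need not be invariant under $\pi(\A)$.

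What actually closes the gap in Arveson's argument is a spectral refinement: normalizing $\|F\|=1$, the hypothesis $\|q^{(n)}(F)\|<1$ forces $I-F^*F$ to be Fredholm and positive, so $0$ is an isolated eigenvalue of finite multiplicity; the associated finite-rank spectral projection $P$ then satisfies $\Phi^{(n)}(I-F^*F)P=0$ via Kadison--Schwarz, and one leverages this \emph{exact} vanishing on a concrete finite-rank piece (rather than approximate vanishing on norming sequences) to get a nonzero finite-rank operator into the multiplicative domain. From there irreducibility propagates the conclusion to all of $\fK(\H)$. Your outline is missing this spectral/finite-rank step, and without it the argument does not go through.
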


We now recall some terminology. Let $\fB$ be a unital commutative Banach algebra. We denote the space of its characters by $\Delta(\fB)$. The \emph{Gelfand transform} of $\fB$ is the unital contractive homomorphism
\[
	g:\fB\to C(\Delta(\fB)) 
\]
defined as
\[
 (g(b))(\chi)=\chi(b) \quad \text{for every }b\in \fB, \chi\in \Delta(\fB).
\]
When we wish to emphasize the dependence on the algebra, we sometimes write $g_\fB$ for the Gelfand transform.

The main result of this section shows that whether or not the Gelfand transform of a commutative unital operator algebra is completely isometric can be detected via the $\rC^*$-envelope.

\begin{theorem}\label{T:GTC*env}
  Let $\A\subset B(\H)$ be a norm-closed unital commutative operator algebra. The following statements are equivalent.
  \begin{enumerate}[{\rm (i)}]
   \item The Gelfand transform of $\A$ is completely isometric.
   
   \item The $\rC^*$-envelope of $\A$ is commutative.
   
   \item There is a closed two-sided ideal $\fJ\subset \rC^*(\A)$ such that $\rC^*(\A)/\fJ$ is commutative and the quotient map $q_\fJ:\rC^*(\A)\to \rC^*(\A)/\fJ$ is completely isometric on $\A$.
  \end{enumerate}
\end{theorem}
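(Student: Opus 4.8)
The plan is to prove the cyclic chain of implications (i) $\Rightarrow$ (ii) $\Rightarrow$ (iii) $\Rightarrow$ (i). The implication (ii) $\Rightarrow$ (iii) is essentially a matter of unwinding definitions: if $\rC^*_e(\A)$ is commutative, take $\fJ=\ker\Pi_\F$ for a family $\F$ of unital $*$-representations with the unique extension property whose direct sum is completely isometric on $\A$; then $\rC^*(\A)/\fJ\cong \rC^*_e(\A)$ is commutative by hypothesis, and the quotient map is completely isometric on $\A$ by construction. The implication (iii) $\Rightarrow$ (i) should follow from the minimality property of the $\rC^*$-envelope together with the fact that for a \emph{commutative} $\rC^*$-algebra the Gelfand transform is a $*$-isomorphism onto $C(X)$: given $\fJ$ as in (iii), the completely isometric copy $q_\fJ(\A)$ of $\A$ sits inside the commutative $\rC^*$-algebra $\rC^*(\A)/\fJ$; I would argue that on a commutative $\rC^*$-algebra the norm and all matrix norms are computed pointwise on the maximal ideal space, so the restriction of the Gelfand transform of $\rC^*(\A)/\fJ$ to $q_\fJ(\A)$ is completely isometric, and this restriction factors through (a quotient of) the Gelfand transform $g_\A$ of $\A$ via the continuous restriction map $\Delta(\rC^*(\A)/\fJ)\to\Delta(\A)$ induced by $q_\fJ$. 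Since $g_\A$ is always completely contractive and we have exhibited a completely isometric map that factors through it, $g_\A$ itself must be completely isometric.

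For the remaining and most delicate implication (i) $\Rightarrow$ (ii), the idea is as follows. Suppose $g_\A:\A\to C(\Delta(\A))$ is completely isometric. Then $\A$ is completely isometrically isomorphic to a unital subalgebra $\B=g_\A(\A)$ of the commutative $\rC^*$-algebra $C(\Delta(\A))$. Now invoke the universal property of the $\rC^*$-envelope, Theorem 4.1 of \cite{dritschel2005} as recalled in the excerpt: $\rC^*_e(\A)$ is a quotient of every $\rC^*$-algebra that is generated by a completely isometric copy of $\A$. Applying this to $\rC^*(\B)\subseteq C(\Delta(\A))$ — or, more cleanly, to the commutative $\rC^*$-algebra $C_\B$ generated by $\B$ inside $C(\Delta(\A))$ — we get a surjective $*$-homomorphism $C_\B\twoheadrightarrow \rC^*_e(\A)$. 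A quotient of a commutative $\rC^*$-algebra is commutative, hence $\rC^*_e(\A)$ is commutative.

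I expect the main obstacle to be making precise the factorization step in (iii) $\Rightarrow$ (i): one must be careful that the Gelfand transform of the ambient commutative $\rC^*$-algebra $\rC^*(\A)/\fJ$, when restricted to the copy of $\A$, genuinely coincides (up to the natural homeomorphism of character spaces induced by $q_\fJ$) with $g_\A$, and that passing to matrices over $\A$ is compatible with all of this. The cleanest route is probably to note that for any unital subalgebra of a commutative $\rC^*$-algebra $C(X)$, the inclusion is automatically completely isometric when matrix norms over $C(X)$ are computed as $\|[f_{ij}]\|=\sup_{x\in X}\|[f_{ij}(x)]\|_{\bM_n}$, and that restricting characters of $C(X)$ (i.e.\ point evaluations) to the subalgebra recovers its Gelfand theory; then the supremum over $X$ dominates, and is dominated by, the corresponding supremum over $\Delta(\A)$. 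An alternative that sidesteps some bookkeeping is to prove (i) $\Leftrightarrow$ (ii) directly using the universal property in both directions and then derive (iii) as a formal consequence, but the cyclic arrangement above keeps each arrow short. Throughout, the only nontrivial external inputs are the existence and minimality of the $\rC^*$-envelope and the elementary structure theory of commutative $\rC^*$-algebras, so no serious technical difficulty beyond careful identification of the relevant maps is anticipated.
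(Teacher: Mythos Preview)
Your proposal is correct and follows essentially the same route as the paper. The only cosmetic difference is organizational: the paper proves the two biconditionals (i)$\Leftrightarrow$(ii) and (ii)$\Leftrightarrow$(iii) separately, whereas you run the cycle (i)$\Rightarrow$(ii)$\Rightarrow$(iii)$\Rightarrow$(i); in particular, your argument for (iii)$\Rightarrow$(i) via pointwise computation of matrix norms on a commutative $\rC^*$-algebra and restriction of characters is exactly the computation the paper uses for (ii)$\Rightarrow$(i), with $\rC^*(\A)/\fJ$ playing the role of $\rC^*_e(\A)$.
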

\begin{proof}
	{\rm (i)} $\Rightarrow$ {\rm (ii)}: Let $g:\A\to C(\Delta(\A))$ denote the Gelfand transform of $\A$, which we assume is completely isometric.  Because the $\rC^*$-envelope is invariant under unital completely isometric isomorphisms of $\A$, we see that $\rC^*_e(\A)\cong\rC^*_e(g(\A))$. But we also know that $\rC^*_e(g(\A))$ is a quotient of $\rC^*(g(\A))\subset C(\Delta(\A))$, so it is commutative.

     {\rm (ii)} $\Rightarrow$ {\rm (i)}:
   Let $\Pi:\A\to \rC^*_e(\A)$ be a unital, completely isometric homomorphism, which exists by construction of the $\rC^*$-envelope. By assumption we have that $\rC^*_e(\A)$ is commutative, so that for $n\geq 1$ and $A\in \bM_n(\A)$ we find
   \begin{align*}
    \|A\|&=\|\Pi^{(n)}(A)\|=\max\{\|\chi^{(n)}(\Pi^{(n)}(A))\|:\chi\in \Delta(\rC^*_e(\A))\}\\
    &=\max\{\|(\chi\circ \Pi)^{(n)}(A)\|:\chi\in \Delta(\rC^*_e(\A))\}\\
    &\leq \max\{\|\chi^{(n)}(A)\|:\chi\in \Delta(\A)\}=\|g^{(n)}(A)\|.
   \end{align*}
 We conclude that the Gelfand transform $g$ is completely isometric. 
 
   {\rm (ii)} $\Rightarrow$ {\rm (iii)}: This follows immediately from the construction of the $\rC^*$-envelope described above.
 
  {\rm (iii)} $\Rightarrow$ {\rm (ii)}: Assume that there is a closed two-sided ideal $\fJ\subset \rC^*(\A)$ such that $\rC^*(\A)/\fJ$ is commutative and the quotient map $q_\fJ:\rC^*(\A)\to \rC^*(\A)/\fJ$ is completely isometric on $\A$.
	It then follows that $\rC^*_e(\A)\cong\rC^*_e(q_\fJ(\A))$ is a quotient of $\rC^*(\A)/\fJ$ and hence it is commutative.
\end{proof}

We extract a consequence that will be useful for our purposes.

\begin{corollary}\label{C:GTC*env}
  Let $\H$ be a Hilbert space with dimension greater than one. Let $\A\subset B(\H)$ be a norm-closed unital commutative operator algebra. Assume that $\rC^*(\A)$ contains the ideal $\fK(\H)$ of compact operators. Consider the following statements.
  \begin{enumerate}[{\rm (i)}]
   \item The Gelfand transform of $\A$ is completely isometric.
   
   \item The identity representation of $\rC^*(\A)$ is not a boundary representation for $\A$. 
  \end{enumerate}
  Then, we have that  {\rm (i)} $\Rightarrow$ {\rm (ii)}. If we assume in addition that $\rC^*(\A)/\fK(\H)$ is commutative, then we have  {\rm (i)} $\Leftrightarrow$ {\rm (ii)}.
\end{corollary}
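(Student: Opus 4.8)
The plan is to deduce Corollary \ref{C:GTC*env} by combining Theorem \ref{T:GTC*env} with Arveson's boundary theorem (Theorem \ref{T:bdrythm}), using the dimension hypothesis only to rule out the degenerate situation where the identity representation is itself commutative.

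First I would prove the implication {\rm (i)} $\Rightarrow$ {\rm (ii)}. Assume the Gelfand transform of $\A$ is completely isometric. By Theorem \ref{T:GTC*env}, there is a closed two-sided ideal $\fJ\subset \rC^*(\A)$ with $\rC^*(\A)/\fJ$ commutative and $q_\fJ$ completely isometric on $\A$. I claim this forces the quotient map $q\colon \rC^*(\A)\to \rC^*(\A)/\fK(\H)$ to be completely isometric on $\A$, which by Theorem \ref{T:bdrythm} yields {\rm (ii)}. To see the claim, note that $\fJ$ cannot contain $\fK(\H)$: if it did, then $\rC^*(\A)/\fK(\H)$ would surject onto the commutative algebra $\rC^*(\A)/\fJ$, but more to the point, since $\fK(\H)\subset \rC^*(\A)$ and $\dim \H > 1$, the algebra $\fK(\H)$ (and hence $\rC^*(\A)$, which contains it as an essential ideal) is non-commutative, and the completely isometric copy $q_\fJ(\A)$ generates the commutative $\rC^*(\A)/\fJ$; comparing with the minimality property of the $\rC^*$-envelope, $\rC^*_e(\A)$ is commutative. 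Then any $*$-representation of $\rC^*(\A)$ with the unique extension property that is faithful enough to be completely isometric on $\A$ must factor through the commutative $\rC^*_e(\A)$, so it cannot be the identity representation (whose range $\rC^*(\A)\supset \fK(\H)$ is non-commutative). Thus the identity representation does not have the unique extension property, i.e., it is not a boundary representation, giving {\rm (ii)}. Alternatively, and more cleanly: if the identity representation \emph{were} a boundary representation, then by the discussion preceding Theorem \ref{T:bdrythm} we would have $\rC^*_e(\A)\cong \rC^*(\A)$, which is non-commutative since it contains $\fK(\H)$ with $\dim\H>1$; this contradicts {\rm (i)} via Theorem \ref{T:GTC*env}. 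So {\rm (i)} $\Rightarrow$ {\rm (ii)}, and this last phrasing is the one I would write.

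Next, under the additional hypothesis that $\rC^*(\A)/\fK(\H)$ is commutative, I would prove {\rm (ii)} $\Rightarrow$ {\rm (i)}. Suppose the identity representation of $\rC^*(\A)$ is not a boundary representation for $\A$. By Theorem \ref{T:bdrythm}, the quotient map $q\colon\rC^*(\A)\to \rC^*(\A)/\fK(\H)$ is completely isometric on $\A$. Now take $\fJ = \fK(\H)$ in condition {\rm (iii)} of Theorem \ref{T:GTC*env}: it is a closed two-sided ideal of $\rC^*(\A)$, the quotient $\rC^*(\A)/\fK(\H)$ is commutative by assumption, and $q$ is completely isometric on $\A$. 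Hence condition {\rm (i)} of Theorem \ref{T:GTC*env} holds, i.e., the Gelfand transform of $\A$ is completely isometric. This establishes {\rm (i)} $\Leftrightarrow$ {\rm (ii)} in this case.

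The main (and really only) obstacle is the first direction, where one must make sure the dimension hypothesis $\dim\H>1$ is used correctly: it is exactly what prevents $\rC^*(\A)$ from being commutative (once it contains $\fK(\H)$), and hence what makes "$\rC^*_e(\A)$ commutative" incompatible with "$\rC^*_e(\A)\cong\rC^*(\A)$." Everything else is a bookkeeping matter of invoking Theorem \ref{T:GTC*env} (equivalence of {\rm (i)} with commutativity of the $\rC^*$-envelope) and Theorem \ref{T:bdrythm} (Arveson's boundary theorem) and noting that when the identity representation is a boundary representation one has $\rC^*_e(\A)\cong\rC^*(\A)$. I would keep the write-up to a short paragraph, essentially: {\rm (i)} $\Rightarrow$ {\rm (ii)} because otherwise $\rC^*_e(\A)\cong\rC^*(\A)\supset\fK(\H)$ would be non-commutative, contradicting Theorem \ref{T:GTC*env}; and conversely, when $\rC^*(\A)/\fK(\H)$ is commutative, {\rm (ii)} gives via Theorem \ref{T:bdrythm} that $q$ is completely isometric on $\A$, so Theorem \ref{T:GTC*env}{\rm (iii)} applies with $\fJ=\fK(\H)$.
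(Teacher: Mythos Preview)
Your proposal is correct and follows essentially the same route as the paper. The ``clean'' contrapositive you settle on for {\rm (i)} $\Rightarrow$ {\rm (ii)} (if the identity representation were a boundary representation then $\rC^*_e(\A)\cong\rC^*(\A)$, which is non-commutative since $\fK(\H)\subset\rC^*(\A)$ and $\dim\H>1$, contradicting Theorem \ref{T:GTC*env}) is exactly the paper's argument, and your {\rm (ii)} $\Rightarrow$ {\rm (i)} via Theorem \ref{T:bdrythm} and Theorem \ref{T:GTC*env}{\rm (iii)} with $\fJ=\fK(\H)$ matches as well; the only cosmetic difference is that the paper also remarks that $\rC^*(\A)$ is irreducible, which is not logically needed for the implication as you phrase it.
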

\begin{proof}
	{\rm (i)} $\Rightarrow$ {\rm (ii)}: We note that $\rC^*(\A)$ is irreducible and not commutative since it contains the compact operators and $\H$ has dimension greater than one. If the identity representation of $\rC^*(\A)$  is a boundary representation for $\A$, then $\rC^*_e(\A)\cong \rC^*(\A)$, which is not commutative. Therefore, the Gelfand transform of $\A$ is not completely isometric by Theorem \ref{T:GTC*env}.
  
	Assume now that  $\rC^*(\A)/\fK(\H)$ is commutative and that the identity representation of $\rC^*(\A)$ is not a boundary representation for $\A$. By Theorem \ref{T:bdrythm}, we see that the quotient map $q:\rC^*(\A)\to \rC^*(\A)/\fK(\H)$ is completely isometric on $\A$. Since $\rC^*(\A)/\fK$ is assumed to be commutative, it follows from Theorem \ref{T:GTC*env} that the Gelfand transform of $\A$ is completely isometric. Thus {\rm (i)} $\Leftrightarrow$ {\rm (ii)} in this case.
\end{proof}


\section{Unitarily invariant kernels and their multiplier algebras}\label{S:kernels}

\subsection{Unitarily invariant kernels with the complete Nevanlinna--Pick property}\label{SS:UENP}

Although the theory of reproducing kernels makes sense in great generality, our focus in this paper will be fairly concrete, so we choose to only recall the relevant definitions in the more specialized setting that we will work in. More details on the general theory can be found in \cite{agler2002} or \cite{PR2016}.

Throughout, the underlying domain will be the open unit ball $\bB_d\subset \bC^d$, for some integer $d\geq 1$. A \emph{kernel} is a function $k:\bB_d\times \bB_d\to \bC$  such that the complex matrix $[k(\lambda_i,\lambda_j)]_{i,j}$ is non-negative definite for any finite subset $\{\lambda_1,\ldots,\lambda_n\}\in \bB_d$. We will always assume that $k$ is a \emph{unitarily invariant} kernel, in the sense that it satisfies the following additional conditions.

\begin{enumerate}[{\rm (a)}]
\item The kernel is \emph{normalized} at $0$, in the sense that
\[
 k(0,w)=1, \quad w\in \bB_d.
\]

 \item The function $k$ is analytic in the first variable and co-analytic in the second.
 \item We have that 
 \[
  k(Uz,Uw)=k(z,w), \quad z,w\in \bB_d
  \]
  for every unitary operator $U:\bC^d\to \bC^d$.
\end{enumerate}
As shown in \cite[Lemma 2.2]{hartz2017isom}, these properties force the kernel to be of the form
\[
 k(z,w)=\sum_{n=0}^\infty a_n \langle z,w \rangle_{\bC^d}^n, \quad z,w\in \bB_d
\]
for some sequence $(a_n)$ of non-negative real numbers such that $a_0=1$. 
Associated to the kernel $k$ is a Hilbert space $\H_k$ of analytic functions on $\bB_d$ which is spanned by the vectors $\{k_w:w\in \bB_d\}$, where
\[
 k_w(z)=k(z,w),\quad z,w\in \bB_d.
\]
Furthermore, these distinguished vectors have the usual reproducing property
\[
 \langle f,k_w \rangle_{\H_k}=f(w)
\]
for every $f\in \H_k$ and every $w\in \bB_d$. 

We say that $k$ is a \emph{regular unitarily invariant kernel} if in addition we have that $a_n>0$ for every $n\geq 0$ and
\[
 \lim_{n\to\infty} \frac{a_n}{a_{n+1}}=1.
\]
As explained in \cite[Section 2]{CH2018}, this last condition insures in particular that the natural domain of definition of the functions in $\H_k$ is $\bB_d$, and not some larger set.

It follows from \cite[Proposition 4.1]{GHX04} that the monomials $\{z^\alpha:\alpha\in \bN_0^d\}$ form an orthogonal basis for $\H$, where
\[
 \bN_0=\{0,1,2,\ldots\}.
\]
Here, we use the standard multi-index notation, so that if $\alpha=(\alpha_1,\ldots,\alpha_d)\in \bN^d_0$ then
\[
 \alpha!=\alpha_1!\alpha_2!\cdots \alpha_d! \qand |\alpha|=\alpha_1+\alpha_2+\cdots+\alpha_d.
\]
Moreover, given a $d$-tuple $r=(r_1,\ldots,r_d)$ of elements in some unital commutative ring we set
\[
 r^\alpha=r_1^{\alpha_1} r_2^{\alpha_2}\cdots r_d^{\alpha_d}.
\]

Among the regular unitarily invariant kernels, we will focus on those that have the \emph{complete Nevanlinna--Pick property}, in the sense that there is a sequence $(b_n)$ of non-negative numbers with the property that
\[
 1-\frac{1}{k(z,w)}=\sum_{n=1}^\infty b_n \langle z,w\rangle^n, \quad z,w\in \bB_d.
\]
Classically, the complete Nevanlinna--Pick property is defined in terms of the solvability of some interpolation problem, but this is equivalent to the previous definition in our context \cite[Lemma 2.3]{hartz2017isom}. A calculation reveals that our standing assumption that $a_0=1$ implies $a_1=b_1>0$. Moreover, it is readily seen that $\sum_{n=1}^\infty b_n\leq 1$.  We will say that the kernel is \emph{maximal} if $\sum_{n=1}^\infty b_n=1$. This is seen to be equivalent to the fact that the series $\sum_{n=1}^\infty a_n$ diverges. The vast majority of our results will require this maximality condition.

Since the kernel $k$ and the Hilbert space $\H_k$ uniquely determine one another, we will often speak of maximal, regular, unitarily invariant complete Nevanlinna--Pick \emph{spaces}, rather than kernels. The following example shows that there are many meaningful instances of such spaces.

\begin{example}\label{E:spaces}
 A standard family of examples of maximal, regular, unitarily invariant kernels on $\bB_d$ with the complete Nevanlinna--Pick property is given by $\{k_s:-1\leq s\}$, where
 \[
  k_s(z,w)=\sum_{n=0}^\infty (n+1)^{s}\langle z,w\rangle^n,\quad z,w\in\bB_d.
 \]
 See \cite[Section 2.4]{CH2018} for details. 
 We note that choosing $s=0$ yields the Drury-Arveson kernel mentioned in the introduction. Moreover, when $d=1$ and $s=-1$, we obtain the famous Dirichlet kernel \cite{EKMR2014}.
 \qed
\end{example}

\subsection{Multiplier algebras and representations}\label{SS:multrep}

Let $\H$ be a regular unitarily invariant complete Nevanlinna--Pick space. Recall that a function $\psi:\bB_d\to\bC$ is a \emph{multiplier} of $\H$ if $\psi f\in \H$ for every $f\in \H$. In this case, the associated multiplication operator $M_\psi:\H\to\H$ is bounded. Throughout, we identify the function $\psi$ with $M_\psi$ whenever convenient. In particular, we may view the algebra $\M(\H)$ of all multipliers of $\H$ as an operator algebra on $\H$, and we define
\[
 \|\psi\|_{\M(\H)}=\|M_\psi\|_{B(\H)}, \quad \psi\in \M(\H).
\]
It is a standard fact that $\M(\H)'=\M(\H)$ in our context, so in particular $\M(\H)$ is weak-$*$ closed. The vectors $\{k_w:w\in \bB_d\}$ span a dense subset of $\H$ and they satisfy
\[
 M_\psi^* k_w=\ol{\psi(w)} k_w
\]
for every $w\in \bB_d$ and every $\psi\in \M(\H)$. 
One consequence of this is that
\begin{equation}\label{Eq:supnorm}
 \|[\psi_{ij}]\|_{\bM_n(\M(\H))}\geq \sup_{z\in \bB_d}\|[\psi_{ij}(z)]\|_{\bM_n}
\end{equation}
for every $n\geq 1$ and every $[\psi_{ij}]\in \bM_n(\M(\H))$.
Another useful consequence is that a bounded net $(\psi_i)_i$ of multipliers converges to $\psi$ in the weak-$*$ topology of $\M(\H)$ if and only if it converges to $\psi$ pointwise on $\bB_d$. Combining \cite[Lemma 2.3 and Proposition 6.4]{hartz2017isom}, we see that the algebra  $\M(\H)$ contains the polynomials. Denoting the standard coordinate functions by $z_1,\ldots,z_d$, we thus have $M_{z_1},\ldots,M_{z_d}\in B(\H)$ and we put 
\[
 M_z=(M_{z_1},\ldots,M_{z_d}).
\]
The following lemma gathers some additional facts that we require throughout.

\begin{lemma}\label{L:projC}
Let $\H$ be a  regular unitarily invariant complete Nevanlinna--Pick space with kernel $k$. Let $(b_n)$ be the sequence of non-negative real numbers such that
\[
1-\frac{1}{k(z,w)}=\sum_{n=1}^\infty b_n \langle z,w \rangle^n, \quad z,w\in \bB_d.
\]
The following statements hold.
\begin{enumerate}[{\rm (i)}]

\item  The row operator 
\[
 \left(b_{|\alpha|}^{1/2}\left(\frac{|\alpha|!}{\alpha!}\right)^{1/2}M_z^\alpha\right)_{|\alpha|\geq 1}
 \]
 is contractive. In fact, the operator 
\[
I-\sum_{n=1}^\infty b_n \sum_{|\alpha|=n}\frac{n!}{\alpha!}M_{z}^\alpha M_z^{\alpha*}
\]
is the orthogonal projection onto $\bC 1$. 

 \item  The function $1-1/k_{\lambda}$ is a contractive multiplier of $\H$ for every $\lambda\in \bB_d$.

\end{enumerate}

\end{lemma}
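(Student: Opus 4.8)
The two statements concern the reproducing kernel $k$ of a regular unitarily invariant complete Nevanlinna--Pick space, and the plan is to extract them both from the defining identity
\[
 1-\frac{1}{k(z,w)}=\sum_{n=1}^\infty b_n \langle z,w \rangle^n
\]
together with the polynomial expansion $\langle z,w\rangle^n=\sum_{|\alpha|=n}\tfrac{n!}{\alpha!}z^\alpha\overline{w}^\alpha$ (the multinomial theorem). For part (i), the key observation is that the reproducing kernel of $\H$ can be written as $k(z,w)=\bigl(1-h(z,w)\bigr)^{-1}$ where $h(z,w)=\sum_{n\geq 1}b_n\langle z,w\rangle^n$, and that $h$ is itself a positive kernel with $h(0,w)=0$. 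The function $h$ therefore has its own reproducing kernel Hilbert space, say $\H_h$, and the fact that $k=(1-h)^{-1}$ with $h$ a positive kernel vanishing at $0$ is exactly the statement that the "symbol" row $\bigl(b_{|\alpha|}^{1/2}(|\alpha|!/\alpha!)^{1/2}z^\alpha\bigr)_{|\alpha|\geq 1}$ is a contractive column multiplier from $\H\otimes\ell^2(\{\alpha:|\alpha|\geq 1\})$ into $\H$; this is the standard realization of a complete Nevanlinna--Pick kernel. Concretely, I would compute, using $M_{z_j}^*k_w=\overline{w_j}k_w$, that
\[
 \Bigl(I-\sum_{n=1}^\infty b_n\sum_{|\alpha|=n}\tfrac{n!}{\alpha!}M_z^\alpha M_z^{\alpha*}\Bigr)k_w = \bigl(1-h(\cdot,w)\bigr)\,k_w
\]
in a suitable weak sense, and then check that $(1-h(z,w))k(z,w)=1$ forces the displayed operator, applied to $k_w$, to return the constant function $1$ paired appropriately; more cleanly, I would verify directly on the orthogonal monomial basis that the positive operator $T=\sum_n b_n\sum_{|\alpha|=n}\tfrac{n!}{\alpha!}M_z^\alpha M_z^{\alpha*}$ satisfies $\langle (I-T)z^\beta, z^\gamma\rangle_\H = 0$ unless $\beta=\gamma=0$, using the formula for $\|z^\alpha\|_\H^2$ in terms of the coefficients $(a_n)$ and the relation between $(a_n)$ and $(b_n)$ coming from $(1-\sum b_n t^n)(\sum a_n t^n)=1$. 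Contractivity of the row is then immediate from $I-T\geq 0$.

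For part (ii), the point is that $1-1/k_\lambda$ is, up to the reproducing kernel normalization, a special case of the general principle that for a complete Nevanlinna--Pick kernel $k$ the function $z\mapsto 1-\dfrac{k(z,\mu)k(\lambda,\lambda)}{k(z,\lambda)k(\mu,\lambda)}$-type expressions are contractive multipliers; here I simply want $\psi_\lambda(z)=1-\dfrac{1}{k(z,\lambda)}=h(z,\lambda)=\sum_{n\geq 1}b_n\langle z,\lambda\rangle^n$. The cleanest argument: $\psi_\lambda$ is a limit (pointwise on $\bB_d$, hence weak-$*$ by the remark preceding the lemma) of the partial sums $\sum_{|\alpha|\leq N} b_{|\alpha|}\tfrac{|\alpha|!}{\alpha!}\lambda^\alpha z^\alpha$, and each partial sum is the image of the finite column $(\overline{\lambda}^\alpha b_{|\alpha|}^{1/2}(|\alpha|!/\alpha!)^{1/2})_\alpha$ under the contractive row operator from part (i); since $\sum_\alpha |\lambda^\alpha|^2 b_{|\alpha|}\tfrac{|\alpha|!}{\alpha!}=\sum_n b_n\|\lambda\|^{2n}\leq\sum_n b_n\leq 1$ for $\lambda\in\bB_d$, that column vector is contractive, so each partial sum is a contractive multiplier, and contractivity passes to the weak-$*$ limit because the unit ball of $\M(\H)$ is weak-$*$ closed. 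Alternatively one can verify directly that the $(d+1)\times(d+1)$ (or $2\times 2$) Pick matrix built from $\psi_\lambda$ and $k$ is positive, which is the Nevanlinna--Pick criterion; but the column-multiplier route reuses part (i) and is shorter.

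I expect part (i) to be the main obstacle, and within it the delicate point is the identity $I - T = P_{\bC 1}$ rather than merely $I - T \geq 0$. Proving $I-T\geq 0$ (equivalently, row contractivity) follows softly from the Nevanlinna--Pick structure, but pinning down that the defect is \emph{exactly} rank one and equals the projection onto the constants requires the precise bookkeeping with the coefficients: one needs $\|z^\alpha\|_\H^{-2}=a_{|\alpha|}\tfrac{\alpha!}{|\alpha|!}$ together with the convolution relation $a_n=\sum_{j=1}^{n}b_j a_{n-j}$ for $n\geq 1$ (and $a_0=1$), which exactly cancels the diagonal entries of $T$ against $I$ on all monomials of positive degree while leaving the constant untouched. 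I would set this up by computing $\langle T z^\beta, z^\beta\rangle_\H$ explicitly, noting $M_z^{\alpha*}z^\beta$ is a scalar multiple of $z^{\beta-\alpha}$ when $\alpha\leq\beta$ and $0$ otherwise, summing over $\alpha$ with $|\alpha|=n\leq|\beta|$, and recognizing the resulting sum as the recursion for $(a_n)$; off-diagonal terms vanish since $T$ commutes with the gauge action (it is a sum of products $M_z^\alpha M_z^{\alpha*}$, each of which preserves the monomial grading and, more to the point, each monomial is an eigenvector-type object under these operators up to lower-degree shifts that get summed away). The rest is routine.
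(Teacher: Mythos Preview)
Your proposal is correct. For part (i), you actually mention \emph{both} viable routes: the kernel-vector computation and the monomial-basis computation. The paper chooses the former, which is considerably shorter: one simply checks that
\[
\Bigl\langle \Bigl(I-\sum_{n\geq 1} b_n \sum_{|\alpha|=n}\tfrac{n!}{\alpha!}M_z^\alpha M_z^{\alpha*}\Bigr)k_\lambda,\,k_\mu\Bigr\rangle
=\Bigl(1-\sum_{n\geq 1}b_n\langle\mu,\lambda\rangle^n\Bigr)k(\mu,\lambda)=1=\langle P_{\bC 1}k_\lambda,k_\mu\rangle
\]
and concludes by density of the kernel vectors. Your preferred monomial approach also works (the key identity is the multinomial Vandermonde $\sum_{\alpha\leq\beta,\,|\alpha|=n}\binom{\beta}{\alpha}=\binom{|\beta|}{n}$, which collapses the diagonal entry of $T$ on $z^\beta$ to $a_{|\beta|}^{-1}\sum_{n=1}^{|\beta|}b_n a_{|\beta|-n}=1$), but it involves more bookkeeping than you may anticipate, so ``more cleanly'' is perhaps optimistic. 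For part (ii) your argument is essentially the paper's: write $1-1/k_\lambda$ as the product of the contractive row from (i) with a contractive scalar column, the only difference being that the paper handles the infinite row directly rather than passing through partial sums and a weak-$*$ limit.
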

\begin{proof}

(i) Consider the row operator  $R=\left(b_{|\alpha|}^{1/2}\left(\frac{|\alpha|!}{\alpha!}\right)^{1/2}M_z^\alpha\right)_{|\alpha|\geq 1}$. Then $I-RR^*$ is positive and bounded by \cite[Lemma 5.2]{CH2018}. Thus, the row operator $R$ is indeed contractive. Next, let $\lambda,\mu\in \bB_d$. We compute
\begin{align*}
 \left\langle \left(I-\sum_{n=1}^\infty b_n \sum_{|\alpha|=n}\frac{n!}{\alpha!}M_{z}^\alpha M_z^{\alpha*} \right)k_\lambda,k_\mu \right\rangle &=\left(1-\sum_{n=1}^\infty b_n \sum_{|\alpha|=n} \frac{n!}{\alpha!}\ol{\lambda}^\alpha \mu^\alpha \right)\left\langle k_\lambda,k_\mu \right\rangle\\
 &=\left(1-\sum_{n=1}^\infty b_n \langle \mu,\lambda \rangle^n \right)k(\mu,\lambda)=1.
\end{align*}
On the other hand, if we denote by $E\in B(\H)$ the orthogonal projection onto $\bC 1$, then we find
\[
 \langle Ek_\lambda,k_\mu \rangle=\langle k_\lambda, 1\rangle \langle 1,k_\mu\rangle=1.
\]
Since the vectors $\{k_\lambda:\lambda\in \bB_d\}$ span a dense subset of $\H$, we conclude that
\[
 E=I-\sum_{n=1}^\infty b_n \sum_{|\alpha|=n}\frac{n!}{\alpha!}M_{z}^\alpha M_z^{\alpha*}.
\]

(ii) Consider the row operator
\[
 \Lambda=\left(b_{|\alpha|}^{1/2}\left(\frac{|\alpha|!}{\alpha!}\right)^{1/2}\lambda ^\alpha I\right)_{|\alpha|\geq 1}.
 \]
We find
\begin{align*}
 \Lambda \Lambda^*=\left(\sum_{n=1}^\infty b_n \|\lambda\|^{2n}\right) I\leq \left(\sum_{n=1}^\infty b_n\right) I\leq I.
\end{align*}
Recall now that we proved above that $RR^*\leq I$, so
\[
 R\Lambda^*=\sum_{n=1}^\infty b_n \sum_{|\alpha|=n}\frac{n!}{\alpha!}\ol{\lambda}^\alpha M_z^\alpha
\]
is a contractive multiplier. A straightforward verification shows that
\[
 \Lambda R^* k_\mu=\left(1-\frac{1}{k(\lambda,\mu)} \right)k_\mu
\]
for every $\mu\in \bB_d$, which implies that $ M_{1-1/k_\lambda}=R\Lambda^*$ is a contractive multiplier.

\end{proof}

We let $\A(\H)$ denote the norm-closed subalgebra of $\M(\H)$ generated by the polynomials. Further, we denote by $\fT(\H)$ the C$^*$-subalgebra of $B(\H)$ generated by $\A(\H)$.  We now identify the character spaces of these algebras.

Because of (\ref{Eq:supnorm}), it is readily seen that the functions in $\A(\H)$ extend to be continuous on $\ol{\bB_d}$. In particular, for each $\lambda\in \ol{\bB_d}$ there is a character $\eps_\lambda:\A(\H)\to \bC$ defined as
\[
 \eps_\lambda(\phi)=\phi(\lambda), \quad \phi\in\A(\H).
\]
Conversely, if $\chi:\A(\H)\to\bC$ is a character and if we put
 \[
  \lambda=(\chi(M_{z_1}),\ldots,\chi(M_{z_d})),
 \]
then $\lambda\in \ol{\bB_d}$; this can be seen as in the proof of \cite[Lemma 5.3]{CH2018} using part (i) of Lemma \ref{L:projC}. Since the polynomials are dense in $\A(\H)$, we conclude that $\chi=\eps_\lambda$. Thus, if we let $\Delta(\A(\H))$ denote the character space of $\A(\H)$, then we find
\begin{equation}\label{Eq:charspaceAH}
 \Delta(\A(\H))=\{\eps_\lambda:\lambda\in \ol{\bB_d}\}.
 \end{equation}
We now turn to the character space of $\fT(\H)$. We denote by $\bS_d$ the unit sphere in $\bC^d$, i.e. the topological boundary of $\bB_d$. 

\begin{theorem}\label{T:Toeplitz}
 Let $\H$ be a  regular unitarily invariant complete Nevanlinna--Pick space. Then, the operator $I-\sum_{j=1}^d M_{z_j}M_{z_j}^*$ is compact and the $\rC^*$-algebra $\fT(\H)$ contains the ideal $\fK(\H)$ of compact operators on $\H$. Furthermore, there is a $*$-isomorphism $\Phi:\fT(\H)/\fK(\H)\to C(\bS_d)$ such that
 \[
	 \Phi(M_\phi+\fK(\H))=\phi|_{\bS_d}, \quad \phi\in \A(\H).
 \]
\end{theorem}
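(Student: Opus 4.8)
The plan is to first establish that $I-\sum_{j=1}^d M_{z_j}M_{z_j}^*$ is compact, and then deduce the remaining assertions from this fact together with standard properties of multivariable Toeplitz/Cuntz-type algebras. For the compactness claim, I would start from part (i) of Lemma \ref{L:projC}, which gives
\[
 E = I-\sum_{n=1}^\infty b_n \sum_{|\alpha|=n}\frac{n!}{\alpha!}M_z^\alpha M_z^{\alpha *},
\]
where $E$ is the rank-one projection onto $\bC 1$ and $\sum_{n=1}^\infty b_n = 1$ by maximality. One extracts the $n=1$ term (recall $b_1 = a_1 > 0$), writing $\sum_{j=1}^d M_{z_j}M_{z_j}^* = b_1^{-1}\bigl(I - E - \sum_{n\geq 2} b_n \sum_{|\alpha|=n}\frac{n!}{\alpha!}M_z^\alpha M_z^{\alpha*}\bigr)$, so it suffices to see that $\sum_{n\geq 2} b_n \sum_{|\alpha|=n}\frac{n!}{\alpha!}M_z^\alpha M_z^{\alpha*}$ differs from a scalar multiple of $I$ by a compact operator. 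Here the computation in the orthogonal monomial basis is the natural tool: on the basis vector $z^\beta$ the operator $\sum_{|\alpha|=n}\frac{n!}{\alpha!}M_z^\alpha M_z^{\alpha*}$ acts diagonally with eigenvalue a ratio of the coefficients $a_{|\beta|}$, and the regularity condition $a_n/a_{n+1}\to 1$ forces these eigenvalues to converge as $|\beta|\to\infty$; summing against $b_n$ and using dominated convergence for the tail $n\geq 2$, one gets that the whole operator is diagonal with eigenvalues converging to $(1-b_1)/b_1$ times a suitable constant, hence scalar-plus-compact. (I expect the bookkeeping here is essentially the content of \cite[Section 2]{CH2018} or of the Arveson--Douglas circle of ideas, and this diagonal estimate is the main obstacle — everything else is soft.) Consequently $I - \sum_{j=1}^d M_{z_j}M_{z_j}^*$ is compact.

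Given compactness, I would next argue that $\fK(\H)\subset \fT(\H)$. Since $\A(\H)$ contains the polynomials, $\fT(\H)$ contains $M_{z_j}$ and $M_{z_j}^*$ for all $j$, hence contains $I - \sum_j M_{z_j}M_{z_j}^*$, which is a nonzero compact operator. A nonzero ideal of compacts inside $\fT(\H)$: because $\fT(\H)$ acts irreducibly on $\H$ (the only closed subspaces invariant for all $M_{z_j}$ and $M_{z_j}^*$ are $0$ and $\H$, since $M_{z_j}^* k_w = \overline{w_j} k_w$ and the $k_w$ span $\H$), and any $\rC^*$-algebra acting irreducibly and containing a single nonzero compact operator must contain all of $\fK(\H)$ (a standard fact, e.g. \cite[Chapter 15]{paulsen2002} or Arveson's work), we conclude $\fK(\H)\subseteq \fT(\H)$.

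Finally, for the identification $\fT(\H)/\fK(\H)\cong C(\bS_d)$, I would observe that modulo $\fK(\H)$ the images $\dot M_{z_1},\ldots,\dot M_{z_d}$ are commuting normal elements (commutativity is clear; normality follows from $[M_{z_i},M_{z_j}^*]$ being compact, which itself follows from $\sum_j M_{z_j}M_{z_j}^* = I + (\text{compact})$ together with the fact that each $M_{z_j}$ is subnormal-like — more carefully, one shows $M_{z_i}^* M_{z_j} - M_{z_j}M_{z_i}^*$ is compact by a diagonal estimate analogous to the one above). Moreover $\sum_j \dot M_{z_j}\dot M_{z_j}^* = \dot I$, so the joint spectrum of the commuting normal tuple $(\dot M_{z_1},\ldots,\dot M_{z_d})$ is contained in $\bS_d$; conversely every point of $\bS_d$ is in the spectrum because the characters $\eps_\lambda$ on $\A(\H)$ for $\lambda\in\bS_d$ do not see $\fK(\H)$ (they are weak-$*$ limits of $\eps_{r\lambda}$ as $r\to 1^-$, and point evaluations at interior points kill no polynomial relations but the corresponding states annihilate compacts in the limit), so the joint spectrum is exactly $\bS_d$. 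By the spectral theorem for commuting normal tuples, the $\rC^*$-algebra they generate — which is all of $\fT(\H)/\fK(\H)$ since the polynomials in $\dot M_{z_j}$ are dense — is $*$-isomorphic to $C(\bS_d)$ via an isomorphism $\Phi$ sending $\dot M_{z_j}$ to the coordinate function, hence $\Phi(M_\phi + \fK(\H)) = \phi|_{\bS_d}$ for every $\phi\in\A(\H)$ by density of polynomials and continuity. I expect the one delicate point in this last paragraph is verifying that the joint essential spectrum is all of $\bS_d$ (the "conversely" direction), which I would handle by exhibiting, for each $\lambda\in\bS_d$, an approximate eigenvector sequence: normalized kernels $k_{r\lambda}/\|k_{r\lambda}\|$ as $r\to 1^-$ converge weakly to $0$ (by maximality, since $\|k_{r\lambda}\|^2 = k(r\lambda,r\lambda)\to\infty$) and satisfy $M_{z_j}^*\bigl(k_{r\lambda}/\|k_{r\lambda}\|\bigr) = \overline{r\lambda_j}\,k_{r\lambda}/\|k_{r\lambda}\| \to \overline{\lambda_j}$, showing $\overline\lambda$ — equivalently $\lambda$ — lies in the essential spectrum.
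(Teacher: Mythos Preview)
The paper's proof of this theorem is a bare citation to \cite[Proposition 4.3 and Theorem 4.6]{GHX04}, so there is no in-house argument to compare against; your sketch is an attempt to reproduce directly what that reference supplies. The outline --- compactness of $I-\sum_j M_{z_j}M_{z_j}^*$, then irreducibility plus a nonzero compact gives $\fK(\H)\subset\fT(\H)$, then essential normality plus joint spectrum equals $\bS_d$ gives the quotient --- is the right shape, but two steps are genuinely incomplete.

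First, you invoke maximality twice, yet the theorem is stated without it. In your compactness argument this is harmless: the eigenvalue of $\sum_{|\alpha|=n}\tfrac{n!}{\alpha!}M_z^\alpha M_z^{\alpha*}$ on the degree-$m$ component is exactly $a_{m-n}/a_m$ (so in particular $\sum_j M_{z_j}M_{z_j}^*$ has eigenvalue $a_{m-1}/a_m\to 1$, and $I-\sum_j M_{z_j}M_{z_j}^*$ is compact directly, without ever passing through $\sum b_n=1$). But in your essential-spectrum step you use $\|k_{r\lambda}\|\to\infty$ to get weak convergence of normalized kernels to $0$, and this \emph{fails} when $\H$ is not maximal, since then $\sum_n a_n<\infty$ and the kernel norms stay bounded. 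A clean fix that works in all cases: once you know $\fT(\H)/\fK(\H)$ is a commutative unital $\rC^*$-algebra generated by $\dot M_{z_1},\ldots,\dot M_{z_d}$ with $\sum_j \dot M_{z_j}\dot M_{z_j}^*=I$, its character space is a nonempty closed subset of $\bS_d$ that is invariant under the unitary group of $\bC^d$ (by unitary invariance of $\H$), and since that action is transitive on $\bS_d$ the spectrum must be all of $\bS_d$.

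Second, and more seriously, you assert essential normality of $M_z$ --- compactness of $[M_{z_i},M_{z_j}^*]$ --- by appeal to ``a diagonal estimate analogous to the one above.'' This is the substantive technical content of the GHX04 reference, and it does not reduce to the one-line eigenvalue computation that handled $I-\sum_j M_{z_j}M_{z_j}^*$. For instance, the self-commutator $[M_{z_j},M_{z_j}^*]$ on the monomial $z^{me_j}$ has eigenvalue $\tfrac{a_{m-1}}{a_m}-\tfrac{a_m}{a_{m+1}}$, and getting this to tend to $0$ is a second-order statement that is not an immediate consequence of $a_m/a_{m+1}\to 1$ alone; one needs to use more of the structure (the CNP condition, or a finer asymptotic argument as in \cite{GHX04}). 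Your sketch does not supply this, so the passage from ``$\sum_j M_{z_j}M_{z_j}^*-I$ compact'' to ``$\fT(\H)/\fK(\H)$ commutative'' has a real gap. Everything downstream of essential normality in your argument is fine once that is in hand.
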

\begin{proof}
This follows from \cite[Proposition 4.3 and Theorem 4.6]{GHX04}.
\end{proof}

\subsection{Ideals and quotients}\label{SS:ideals}
In this subsection we introduce the main operator algebras that we study in the paper, and prove some relevant preliminary facts about them. We first point out a very useful correspondence between weak-$*$ closed ideals and invariant subspaces of $\M(\H)$, which can be found in \cite[Theorem 2.2]{CT2019spectrum}.

\begin{theorem}\label{T:lattice}
Let $\H$ be a unitarily invariant complete Nevanlinna--Pick space. Given a weak-$*$ closed ideal $\fa\subset \M(\H)$, the set
\[
 \R(\fa)=\ol{\spn\{M_\phi f:\phi\in \fa, f\in \H\}}
\]
is a closed invariant subspace for $\M(\H)$. Given a closed subspace $\N\subset \H$ that is invariant for $\M(\H)$, the set
\[
 \iota(\N)=\{\psi\in \M(\H): \ran M_\psi\subset \N\}
\]
is a weak-$*$ closed ideal of $\M(\H)$. Moreover, we have that
\[
 \iota(\R(\fa))=\fa, \quad \R(\iota(\N))=\N
\]
for every weak-$*$ closed ideal $\fa\subset \M(\H)$ and every closed subspace $\N\subset \H$ that is invariant for $\M(\H)$.
\end{theorem}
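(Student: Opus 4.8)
The plan is to split the statement into the routine structural assertions and the two substantive identities $\R(\iota(\N))=\N$ and $\iota(\R(\fa))=\fa$; only the latter two really use the complete Nevanlinna--Pick hypothesis. First I would handle the soft parts. That $\R(\fa)$ is closed is built into its definition, and it is $\M(\H)$-invariant because $M_\psi(M_\phi f)=M_{\psi\phi}f$ and $\psi\phi\in\fa$ whenever $\psi\in\M(\H)$ and $\phi\in\fa$. The set $\iota(\N)$ is clearly a subspace, and it is an ideal because $\ran M_{\psi\phi}=M_\psi(\ran M_\phi)\subseteq M_\psi\N\subseteq\N$ for $\phi\in\iota(\N)$ and $\psi\in\M(\H)$. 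To see that $\iota(\N)$ is weak-$*$ closed I would use the Krein--Smulian theorem and verify that $\iota(\N)$ meets each ball of $\M(\H)$ in a weak-$*$ closed set: if $(\psi_i)$ is a bounded net in $\iota(\N)$ with $\psi_i\to\psi$ weak-$*$, then $\psi_i\to\psi$ pointwise on $\bB_d$, so $M_{\psi_i}^*k_w=\ol{\psi_i(w)}k_w\to\ol{\psi(w)}k_w=M_\psi^*k_w$, and, the net being bounded and $\{k_w\}$ total, $M_{\psi_i}f\to M_\psi f$ weakly in $\H$ for each $f$; since the closed subspace $\N$ is weakly closed, $M_\psi f\in\N$ and hence $\psi\in\iota(\N)$. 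Lastly, $\fa\subseteq\iota(\R(\fa))$ because every generator $M_\phi f$ of $\R(\fa)$ with $\phi\in\fa$ has range inside $\R(\fa)$, and $\R(\iota(\N))\subseteq\N$ because every generator $M_\psi f$ of $\R(\iota(\N))$ already lies in $\N$.

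For $\N\subseteq\R(\iota(\N))$ I would invoke the Beurling-type theorem for spaces with a complete Nevanlinna--Pick kernel (McCullough--Trent): there is a family $(\phi_i)_{i\in I}$ of multipliers of $\H$ for which the row operator $(M_{\phi_i})_{i\in I}\colon\bigoplus_{i\in I}\H\to\H$ is a partial isometry with range $\N$, so that $\sum_i M_{\phi_i}M_{\phi_i}^*=P_\N$ in the strong operator topology and $\ran M_{\phi_i}\subseteq\N$ for every $i$, whence $\phi_i\in\iota(\N)$. Then for $n\in\N$ we have $n=P_\N n=\sum_i M_{\phi_i}(M_{\phi_i}^*n)$, a norm-convergent series each of whose terms lies in $\R(\iota(\N))$; thus $n\in\R(\iota(\N))$ and $\R(\iota(\N))=\N$.

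For $\iota(\R(\fa))\subseteq\fa$ I would first note that, $\fa$ being an ideal and the polynomials being dense in $\H$, the subspace $\R(\fa)$ equals the closure of $\fa$ in the norm of $\H$, and consequently $\iota(\R(\fa))=\M(\H)\cap\R(\fa)$. Given $\psi\in\M(\H)$ with $\ran M_\psi\subseteq\R(\fa)$, I would apply McCullough--Trent to $\N=\R(\fa)$ together with the crucial point that, because $\R(\fa)$ is the invariant subspace generated by the ideal $\fa$, its Beurling symbols $(\phi_i)_{i\in I}$ may be taken inside $\fa$. Then $\sum_i M_{\phi_i}M_{\phi_i}^*=P_{\R(\fa)}$, so $M_\psi M_\psi^*\le\|M_\psi\|^2 P_{\R(\fa)}$, and Leech's factorization theorem for the multiplier algebra of a complete Nevanlinna--Pick space yields a column multiplier $(c_i)_{i\in I}$ with $M_\psi=\sum_i M_{\phi_i}M_{c_i}$, the partial sums uniformly bounded and strongly convergent. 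Each $\phi_i c_i$ lies in $\fa$, so the finite partial sums $\sum_{i\in F}M_{\phi_i c_i}$ form a bounded net converging weak-$*$ to $\psi$; since $\fa$ is weak-$*$ closed, $\psi\in\fa$. The hard part is precisely this last ingredient: that the Beurling parametrization of $\R(\fa)$ can be chosen with symbols inside $\fa$, not merely inside $\M(\H)\cap\ol{\fa}$ --- which would render the argument circular. Equivalently, one must show that a weak-$*$ closed ideal of $\M(\H)$ is automatically closed under those Hilbert-space norm limits that land back in $\M(\H)$; given that, all four assertions follow as above.
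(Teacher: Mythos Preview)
Your routine parts are fine, and your argument for $\R(\iota(\N))=\N$ via McCullough--Trent is correct.  The problem is exactly where you flag it: the circularity in the proof of $\iota(\R(\fa))\subseteq\fa$ is real, not merely apparent.  The Beurling symbols $(\phi_i)$ produced by McCullough--Trent for $\R(\fa)$ satisfy $\phi_i\in\iota(\R(\fa))$, and the reformulation you give---that a weak-$*$ closed ideal be closed under $\H$-norm limits landing back in $\M(\H)$---is precisely the equality $\M(\H)\cap\ol{\fa}^{\H}=\fa$, i.e.\ the very identity $\iota(\R(\fa))=\fa$ you are trying to establish.  Your Leech factorization $M_\psi=\sum_i M_{\phi_i}M_{c_i}$ therefore only yields $\psi\in\iota(\R(\fa))$, which you already knew.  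Replacing the McCullough--Trent family by a weighted row built directly from elements of $\fa$ does not help either: such a row has dense range in $\R(\fa)$ but need not dominate $P_{\R(\fa)}$, so Leech does not apply.

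The paper does not prove this theorem here; it is quoted from \cite{CT2019spectrum}.  The missing ingredient is genuinely non-trivial and is typically supplied by reducing to the Drury--Arveson space via the Agler--McCarthy universality of complete Nevanlinna--Pick kernels, and then invoking the corresponding lattice isomorphism for $\M_d$, which in turn rests on the Davidson--Pitts analysis of weak-$*$ closed ideals in the non-commutative analytic Toeplitz algebra on Fock space.  In that free setting the analogue of $\iota(\R(\fa))=\fa$ has a direct proof (using wandering subspaces and the explicit Beurling theorem for the shift on Fock space, where the inner symbols visibly lie in the ideal), and the commutative statement follows by passing to the symmetric quotient.  If you want to keep your argument self-contained, the step you must add is a proof that the McCullough--Trent inner family for an invariant subspace of the form $[\fa\H]$ can be chosen inside $\fa$; this does \emph{not} follow from the abstract kernel-factorization proof of McCullough--Trent and requires the extra input just described.
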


Let $\H$ be a unitarily invariant complete Nevanlinna--Pick space and let $\fa\subset \M(\H)$ be a weak-$*$ closed ideal. Let
\[
 [\fa \H]=\ol{\spn\{M_\phi f:\phi\in \fa, f\in \H\}}
\]
and $\H_\fa=\H\ominus [\fa \H]$. If $\fa$ is a proper ideal of $\M(\H)$, then $\H_\fa$ is necessarily non-zero, as we show next.

\begin{lemma}\label{L:properideal}
 Let $\H$ be a unitarily invariant complete Nevanlinna--Pick space and let $\fa\subset \M(\H)$ be a weak-$*$ closed ideal. The following statements hold.
 \begin{enumerate}[{\rm (i)}]
  \item The ideal $\fa$ is proper if and only if $P_{\H_\fa}1\neq 0$.
  
  \item The ideal $\fa$ is maximal if and only if $\H_\fa$ is one-dimensional. 
 \end{enumerate}
\end{lemma}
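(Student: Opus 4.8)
The plan is to establish both equivalences by combining the lattice correspondence of Theorem~\ref{T:lattice} with the reproducing kernel structure of $\H$, exploiting repeatedly that $1$ is a cyclic vector that generates $\H$ under the multiplier algebra and that $\M(\H)' = \M(\H)$.

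For part~(i): First I would observe that if $\fa = \M(\H)$, then $[\fa\H] = \H$ and so $\H_\fa = 0$, whence $P_{\H_\fa}1 = 0$. Conversely, suppose $P_{\H_\fa}1 = 0$, i.e. $1 \in [\fa\H]$. The key point is that $[\fa\H]$ is invariant under $\M(\H)$, so $\M(\H)1 \subset [\fa\H]$; but since the polynomials lie in $\M(\H)$ and are dense in $\H$ (or more directly, since $\psi = \psi\cdot 1$ for every multiplier $\psi$), the orbit $\M(\H)1$ is dense in $\H$, forcing $[\fa\H] = \H$. Then by Theorem~\ref{T:lattice}, $\fa = \iota(\R(\fa)) = \iota([\fa\H]) = \iota(\H) = \M(\H)$, so $\fa$ is not proper. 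I expect this direction to be essentially routine given the machinery already set up; the only mild care needed is the observation that $1$ is cyclic, which follows because $M_\psi 1 = \psi$ exhausts all of $\M(\H)$, a dense subset of $\H$.

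For part~(ii): If $\H_\fa$ is one-dimensional, I would argue it must be spanned by a reproducing kernel $k_\lambda$ (since $\H_\fa$ is coinvariant and hence spanned by a joint eigenvector of $M_z^*$, which must be some $k_\lambda$), so $[\fa\H] = \H\ominus \bC k_\lambda = \{f : f(\lambda)=0\}$, and then $\fa = \iota([\fa\H])$ consists of exactly the multipliers vanishing at $\lambda$, which is the (maximal) kernel of the character $\eps_\lambda$. Conversely, if $\fa$ is maximal, then since $\fa$ is proper, part~(i) gives $\H_\fa \neq 0$; I would show $\H_\fa$ cannot have dimension $\geq 2$ by a "squeezing" argument: if it did, one could produce a weak-$*$ closed ideal strictly between $\fa$ and $\M(\H)$ by picking a proper nonzero coinvariant subspace of $\H_\fa$ inside $\H$ containing $[\fa\H]$ (for instance using that $M_z$ acting on $\H_\fa$, being a row contraction on a space of dimension $>1$, has a nontrivial coinvariant subspace, or by intersecting with $\{f : f(\mu) = 0\}$ for a suitable $\mu$), and applying $\iota$ to contradict maximality.

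The main obstacle I anticipate is the converse direction of~(ii): showing that maximality of $\fa$ forces $\dim \H_\fa = 1$, since this requires producing an intermediate ideal, which in turn requires a genuinely nontrivial invariant-subspace fact about $\M(\H)$ acting on a space where $\dim \H_\fa > 1$. The cleanest route is probably to fix any $\lambda \in \bB_d$ with $k_\lambda \not\perp \H_\fa$ (such $\lambda$ exists since the $k_\lambda$ span $\H$ and $\H_\fa \neq 0$), note $\eps_\lambda$ restricts to a character of $\M(\H)$ whose kernel $\mathfrak{m}_\lambda$ contains $\fa$ (because every $\phi \in \fa$ has $M_\phi^* k_\lambda = \ol{\phi(\lambda)}k_\lambda$ and one checks $\phi(\lambda)=0$ using $k_\lambda \perp [\fa\H]$ — wait, this needs $k_\lambda \in [\fa\H]^\perp = \H_\fa$, so actually one should pick $\lambda$ with $k_\lambda \in \H_\fa$, which is possible iff the joint point spectrum condition holds), so by maximality $\fa = \mathfrak{m}_\lambda$, and then directly compute $\H_\fa = \H\ominus[\mathfrak{m}_\lambda \H] = \bC k_\lambda$. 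Getting the selection of $\lambda$ and the identification $[\mathfrak{m}_\lambda\H] = \{f:f(\lambda)=0\}$ exactly right, using Lemma~\ref{L:projC} to control the relevant multipliers $1 - 1/k_\lambda$, is where the real work lies.
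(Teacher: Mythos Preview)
Your treatment of part (i) is essentially the paper's argument: both hinge on the fact (via Theorem~\ref{T:lattice}) that a multiplier $\psi$ lies in $\fa$ precisely when the function $\psi = M_\psi 1$ lies in $[\fa\H]$, combined with $1$ being cyclic.

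For part (ii), however, your route is quite different from the paper's and considerably more involved. The paper avoids kernel vectors and point spectra entirely and argues purely algebraically: if $\fa$ is maximal then $\M(\H) = \bC 1 + \fa$, so $\bC 1 + \fa$ is dense in $\H$, and projecting onto $\H_\fa$ shows that $\bC\, P_{\H_\fa}1$ is dense in $\H_\fa$, forcing $\dim\H_\fa = 1$. For the converse, if $\fa$ is proper but not maximal, pick $\phi$ with $1+\fa$ and $\phi+\fa$ linearly independent in $\M(\H)/\fa$; then $c\cdot 1 + d\phi \notin \fa$ for $(c,d)\neq(0,0)$, hence (again by Theorem~\ref{T:lattice}) $c\cdot 1 + d\phi \notin [\fa\H]$, so $P_{\H_\fa}1$ and $P_{\H_\fa}\phi$ are linearly independent. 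That is the whole proof.

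Your geometric approach can be made to work, but the step you flag as ``where the real work lies'' is genuinely delicate. To find $\lambda\in\bB_d$ with $k_\lambda\in\H_\fa$ when $\fa$ is maximal, you need that the character annihilating $\fa$ is weak-$*$ continuous (since its kernel is weak-$*$ closed) and that weak-$*$ continuous characters of $\M(\H)$ are point evaluations --- the latter is true but lies deeper than anything else in this lemma. You would then still need to verify $[\mathfrak m_\lambda \H] = \{f:f(\lambda)=0\}$ directly. (Incidentally, the multiplier $1-1/k_\lambda$ does \emph{not} vanish at $\lambda$ unless $\lambda=0$, so that particular tool will not do what you suggest.) None of this is wrong, but the paper's two-line density argument sidesteps all of it.
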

\begin{proof}
(i) If $\fa=\M(\H)$, then clearly $\H_\fa=\{0\}$ and $P_{\H_\fa}1=0$. Conversely, if $\fa$ is proper then $1\notin \fa$, whence $1\notin [\fa \H]$ by Theorem \ref{T:lattice}. This is equivalent to $P_{\H_\fa}1\neq 0$.

(ii) If $\fa$ is proper but not maximal, then $\M(\H)/\fa$ has dimension at least two. Hence, there is $\phi\in \fa$ such that the cosets $1+\fa$ and $\phi+\fa$ are linearly independent in $\M(\H)/\fa$. Thus, if $c,d\in \bC$ are not both zero, then $c1+d\phi\notin \fa$, which in turn implies by Theorem \ref{T:lattice} that $c1+d\phi\notin [\fa \H]$. We conclude that $P_{\H_\fa}1$ and $P_{\H_\fa}\phi$ are linearly independent in $\H_\fa$, so that $\H_\fa$ has dimension greater than one. Conversely, if $\fa$ is maximal then we have $\M(\H)=\bC 1+\fa$. Using that $\M(\H)$ is dense in $\H$, we infer that $\bC 1+\fa$ is dense in $\H$ as well. This implies that $\bC P_{\H_\fa} 1=P_{\H_\fa}(\bC 1+\fa)$ is dense in $\H_\fa$, so that $\H_\fa$ is one-dimensional.
\end{proof}

Observe that $\H_\fa$ is a co-invariant subspace for $\M(\H)$.
Let $Z_\fa =(\fz_1,\ldots,\fz_d)$ denote the $d$-tuple of commuting operators on $\H_\fa$ where
\[
	 \fz_j=P_{\H_\fa}M_{z_j}|_{\H_\fa}, \quad 1\leq j\leq d.
\]
Consider now the map $\Gamma:\M(\H)\to B(\H_\fa)$ defined as
\[
 \Gamma(\psi)=P_{\H_\fa}M_{\psi}|_{\H_\fa}, \quad \psi\in \M(\H).
\]
It is readily verified that $\Gamma$ is unital completely contractive homomorphism that is also weak-$*$ continuous. It follows from Theorem \ref{T:lattice} that $\ker \Gamma=\fa$. The map $\Gamma$ should be thought of as the ``$\M(\H)$-functional calculus'' in the sense of \cite{BHM2018}, and thus we use the notation
\[
 \psi(Z_\fa)=\Gamma(\psi), \quad \psi\in \M(\H).
\]
Throughout the paper, we will use the notation $\M_\fa=\Gamma(\M(\H)).$ Thus, 
\[
 \M_\fa=\{\psi(Z_\fa):\psi\in \M(\H)\}.
\]
This algebra will be one of our main objects of study.
It follows from the Commutant Lifting Theorem  \cite[Theorem 5.1]{BTV2001} that $\M_\fa'=\M_\fa$. In particular, $\M_\fa$ is closed in the weak-$*$ topology of $B(\H_\fa)$. Invoking \cite[Theorem 2.3]{CT2019spectrum}, we see that $\Gamma$ induces a unital, completely isometric and weak-$*$ homeomorphic isomorphism $\Gamma_\fa:\M(\H)/\fa\to \M_\fa$ such that
\[
 \Gamma_\fa(M_\psi+\fa)=\psi(Z_\fa), \quad \psi\in \M(\H).
\]
We denote the character space of $\M_\fa$ by $\Delta(\M_\fa)$. Following \cite[Theorem 3.8]{CT2019spectrum}, we define the \emph{support} of $\fa$ to be the set
\[
 \supp \fa=\{(\chi(\fz_1),\ldots,\chi(\fz_d)):\chi\in \Delta(\M_\fa)\}.
\]
If $\M(\H)$ is assumed to satisfy the Corona property, then a more concrete function theoretic description of the support can be given, see \cite[Corollary 3.12]{CT2019spectrum}. In general, it can be shown that $\supp \fa\cap \bB_d$ coincides with the zero set
\[
\{\lambda\in\bB_d:\psi(\lambda)=0 \text{ for every }\psi\in \fa\},
\]
see \cite[Theorem 3.4]{CT2019spectrum} for details. 

Next, we define the second operator algebra on which we focus throughout. We put
\[
 \A_\fa=\ol{\Gamma(\A(\H))}^{\textrm{norm}}
\]
so that
\[
	\A_\fa=\ol{\{\phi(Z_\fa):\phi\in \A(\H)\}}^{\textrm{norm}}.
\]
We also set  $\fT_\fa=\rC^*(\A_\fa)$. We show below that $\fT_\fa$ contains the ideal $\fK(\H_\fa)$ of compact operators, and we denote the corresponding quotient by $\O_\fa=\fT_\fa/\fK(\H_\fa)$. 
The following generalizes \cite[Lemma 7.1]{CH2018}.

\begin{lemma}\label{L:irred}
Let $\H$ be a  regular unitarily invariant complete Nevanlinna--Pick space with kernel $k$. Let $(b_n)$ be the sequence of non-negative real numbers such that
\[
1-\frac{1}{k(z,w)}=\sum_{n=1}^\infty b_n \langle z,w \rangle^n, \quad z,w\in \bB_d.
\]
Let $\fa\subset \M(\H)$ be a proper weak-$*$ closed ideal. Then, the following statements hold.
\begin{enumerate}[{\rm (i)}]
\item The operator  $I-\sum_{j=1}^d \fz_j \fz_j^*$ is compact.

\item We have that 
\[
P_{\H_\fa} P_{\bC 1} |_{\H_\fa} =I-\sum_{n=1}^\infty b_n \sum_{|\alpha|=n} \frac{n!}{\alpha!}Z_\fa^\alpha Z_\fa^{\alpha*}
\] 
and this operator is compact and non-zero. 

\item The $\rC^*$-algebra $\fT_{\fa}$ is irreducible and contains the ideal of compact operators on $\H_{\fa}$. 
\end{enumerate}
\end{lemma}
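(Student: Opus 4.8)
The common mechanism behind all three parts is the co-invariance of $\H_\fa$ for $\M(\H)$. The plan is to record first that for every multi-index $\alpha$ one has $Z_\fa^{\alpha*}h=M_z^{\alpha*}h$ for $h\in\H_\fa$, and $Z_\fa^\alpha g=P_{\H_\fa}M_z^\alpha g$ for $g\in\H_\fa$ (the latter by an easy induction on $|\alpha|$, using that $[\fa\H]$ is invariant for $M_{z_1},\dots,M_{z_d}$), so that $Z_\fa^\alpha Z_\fa^{\alpha*}=P_{\H_\fa}M_z^\alpha M_z^{\alpha*}|_{\H_\fa}$. In other words, each of these operators on $\H_\fa$ is the compression to $\H_\fa$ of the corresponding operator on $\H$, and everything that follows is bookkeeping with these compressions.

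Given this, part (i) is immediate: $I_{\H_\fa}-\sum_{j=1}^d\fz_j\fz_j^*=P_{\H_\fa}\bigl(I_\H-\sum_{j=1}^dM_{z_j}M_{z_j}^*\bigr)|_{\H_\fa}$ is a compression of an operator that is compact by Theorem~\ref{T:Toeplitz}. For part (ii), I would push the identity of Lemma~\ref{L:projC}(i), namely $P_{\bC 1}=I_\H-\sum_{n\ge1}b_n\sum_{|\alpha|=n}\tfrac{n!}{\alpha!}M_z^\alpha M_z^{\alpha*}$ with the series strong-operator convergent, through the strong-operator-continuous compression map $X\mapsto P_{\H_\fa}X|_{\H_\fa}$; the displayed formula then drops out from the opening observation. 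Since $P_{\bC 1}$ has rank one, so does its compression, and evaluating the latter at $h\in\H_\fa$ gives $\langle h,1\rangle P_{\H_\fa}1=\langle h,P_{\H_\fa}1\rangle P_{\H_\fa}1$; thus it is the rank-one operator $h\mapsto\langle h,w\rangle w$ with $w:=P_{\H_\fa}1$, which is non-zero precisely because $\fa$ is proper (Lemma~\ref{L:properideal}(i)).

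For part (iii) I would treat irreducibility first. If a projection $P$ commutes with $\fT_\fa$, then it commutes with every $Z_\fa^\alpha Z_\fa^{\alpha*}$, hence with every partial sum of the series in (ii), hence with its strong-operator limit, and therefore with the rank-one operator $R\colon h\mapsto\langle h,w\rangle w$ of (ii); writing out $PR=RP$ and evaluating at $w$ forces $Pw\in\{0,w\}$. Next I would observe that $w=P_{\H_\fa}1$ is cyclic for $\fT_\fa$: indeed $\phi(Z_\fa)w=P_{\H_\fa}\phi$ for every polynomial $\phi$ (again by the $\M(\H)$-invariance of $[\fa\H]$), and the vectors $P_{\H_\fa}\phi$ are dense in $\H_\fa$. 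Hence whichever of $\ran P$ and $\ran(I-P)$ contains $w$ is a closed $\fT_\fa$-reducing subspace containing a cyclic vector, so it equals $\H_\fa$; thus $P\in\{0,I\}$ and $\fT_\fa$ is irreducible.

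Finally, to obtain $\fK(\H_\fa)\subseteq\fT_\fa$, I would invoke the standard dichotomy (cf.\ \cite{arveson1976inv}) that an irreducible $\rC^*$-algebra either contains all of $\fK(\H_\fa)$ or meets it only in $\{0\}$, and rule out the second alternative. In that case each operator $C_n:=I_{\H_\fa}-\sum_{|\alpha|=n}\tfrac{n!}{\alpha!}Z_\fa^\alpha Z_\fa^{\alpha*}$, being by the opening observation the compression of $I_\H-\sum_{|\alpha|=n}\tfrac{n!}{\alpha!}M_z^\alpha M_z^{\alpha*}$, must vanish, once I check that this operator on $\H$ is compact; and it is, because under the $*$-isomorphism $\fT(\H)/\fK(\H)\cong C(\bS_d)$ of Theorem~\ref{T:Toeplitz} the coset of $\sum_{|\alpha|=n}\tfrac{n!}{\alpha!}M_z^\alpha M_z^{\alpha*}$ equals $\sum_{|\alpha|=n}\tfrac{n!}{\alpha!}|z^\alpha|^2=\|z\|^{2n}$, which is identically $1$ on $\bS_d$. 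So $\sum_{|\alpha|=n}\tfrac{n!}{\alpha!}Z_\fa^\alpha Z_\fa^{\alpha*}=I_{\H_\fa}$ for all $n\ge1$; weighting by $b_n$, summing over $n$, and using (ii) gives $P_{\H_\fa}P_{\bC 1}|_{\H_\fa}=\bigl(1-\sum_{n\ge1}b_n\bigr)I_{\H_\fa}$, which forces $\sum_{n\ge1}b_n<1$ and $\dim\H_\fa=1$ since the left side is non-zero of rank one; but then $I_{\H_\fa}$ is itself a non-zero compact operator in $\fT_\fa$, a contradiction. I expect this last step to be the only real obstacle: the obvious non-zero compact operator in sight, $P_{\H_\fa}P_{\bC 1}|_{\H_\fa}$, is a priori only a strong-operator limit of elements of the norm-closed algebra $\fT_\fa$ and so cannot be fed to the dichotomy directly; the fix is to work instead with the genuine polynomial expressions $C_n\in\fT_\fa$ and exploit that $\sum_{|\alpha|=n}\tfrac{n!}{\alpha!}|z^\alpha|^2\equiv1$ on $\bS_d$.
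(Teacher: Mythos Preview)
Your argument is correct throughout. Parts (i) and (ii) coincide with the paper's proof. Part (iii), however, proceeds differently in both halves.

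For irreducibility, the paper argues as follows: a projection $P\neq I$ commuting with $\A_\fa$ also commutes with $\M_\fa$, so by the Commutant Lifting Theorem $P=\psi(Z_\fa)$ for some $\psi\in\M(\H)$ with $\|\psi\|\leq1$ and $\psi\neq1$; the maximum modulus principle then forces $\psi^n\to0$ pointwise on $\bB_d$, hence weak-$*$, whence $P=P^n=\psi^n(Z_\fa)\to0$. Your route---showing that any reducing projection must fix or kill the rank-one operator from (ii), hence fix or kill the cyclic vector $P_{\H_\fa}1$---is more elementary and avoids the Commutant Lifting machinery entirely.

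For the containment $\fK(\H_\fa)\subset\fT_\fa$, the paper simply asserts that $\fT_\fa$ ``contains a non-zero compact operator (by (ii))'', apparently taking the rank-one operator $Q=P_{\H_\fa}P_{\bC1}|_{\H_\fa}$ to lie in $\fT_\fa$. You are right to flag that $Q$ is a priori only a \emph{strong} limit of elements of the norm-closed algebra $\fT_\fa$, so its membership is not automatic (and indeed the individual terms $T_n=\sum_{|\alpha|=n}\tfrac{n!}{\alpha!}Z_\fa^\alpha Z_\fa^{\alpha*}$ need not be uniformly bounded in norm across all spaces covered by the hypotheses). Your fix---showing each $C_n=I-T_n$ is a genuine element of $\fT_\fa$ that is compact via Theorem~\ref{T:Toeplitz} and the multinomial identity $\sum_{|\alpha|=n}\tfrac{n!}{\alpha!}|z^\alpha|^2=\|z\|^{2n}$, then deriving a contradiction if all $C_n$ vanish---closes this gap cleanly.
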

\begin{proof}
(i) Observe that
\begin{align*}
I-Z_{\fa}Z_{\fa}^*&=P_{\H_\fa}\left(I-\sum_{k=1}^d M_{z_k}M_{z_k}^* \right)|_{\H_\fa}
\end{align*}
which is compact by Theorem \ref{T:Toeplitz}.

(ii) Put 
\[
 Q= I-\sum_{n=1}^\infty b_n \sum_{|\alpha|=n} \frac{n!}{\alpha!}Z_\fa^\alpha Z_\fa^{\alpha*}.
 \]
 Then $Q=P_{\H_\fa}P_{\bC 1}|_{\H_\fa}$ by Lemma \ref{L:projC}(i), so that $Q$ has rank at most one. Moreover, if $Q=0$ then $P_{\H_\fa}P_{\bC 1}=0$, and so $P_{\H_\fa}1=0$, which is impossible by Lemma \ref{L:properideal} since $\fa$ is a proper ideal.

(iii) Let $P\in B(\H_\fa)$ be a self-adjoint projection with $P\neq I$ that commutes with $\A_\fa$, and hence with $\M_\fa$. By the Commutant Lifting Theorem \cite[Theorem 5.1]{BTV2001}, there is $\psi\in \M(\H)$ with $\psi\neq 1$ and $\|\psi\|\leq 1$ such that $P=\psi(Z_\fa)$. By the maximum modulus principle, we see that the sequence $(\psi^n)_n$ converges to $0$ pointwise on $\bB_d$, and thus also in the weak-$*$ topology of $\M(\H)$. In particular, we infer that $(\Gamma(\psi^n))_n$ converges to $0$ in the weak-$*$ topology of $B(\H_\fa)$. Since
\[
P=P^n=\Gamma(\psi^n), \quad n\geq 1
\]
we conclude that $P=0$. This shows that $\A_{\fa}$ is irreducible, and hence so is $\fT_\fa$. Therefore, $\fT_\fa$ is an irreducible $\rC^*$-algebra containing a  non-zero compact operator (by (ii)), and hence it must contain all compact operators on $\H_\fa$ \cite[Corollary 2 page 18]{arveson1976inv}.
\end{proof}

In light of the identication of $\M(\H)/\fa$ with $\M_\fa$, it is natural to wonder if $\A_\fa$ can be identified in an analogous fashion with some quotient of $\A(\H)$. This is possible in some special situations. For instance, \cite[Corollary 8.3]{CH2018} can be used to show that if $\fa$ is a so-called \emph{homogeneous} ideal, then $\A_\fa$ can be identified completely isometrically with $\A(\H)/(\fa\cap \A(\H))$. But this identification does not hold for general ideals. Indeed, even in the classical case of the Hardy space on the unit disc there are non-trivial weak-$*$ closed ideals $\fa$ such that $\fa\cap \A(\H)$ is zero, yet the natural map from $\A(\H)$ onto $\A_\fa$ is not isometric \cite[Corollary 1 page 291]{arveson1972}.

We end this section with an important fact. Recall that a commuting $d$-tuple $U=(U_1,\ldots,U_d)$ of operators is said to be a \emph{spherical unitary} if $U_1,\ldots,U_d$ are all normal and $\sum_{j=1}^d U_j U_j^*=I$.

\begin{lemma}\label{L:sphunitUEPk}
Let $\H$ be a  maximal regular unitarily invariant complete Nevanlinna--Pick space. 
Let $\fa\subset \M(\H)$ be a proper weak-$*$ closed ideal. Let $\rho:\A_\fa\to B(\E)$ be a unital completely contractive representation with the property that
$
(\rho(\fz_1),\ldots,\rho(\fz_d))
$
is a spherical unitary. Then, there is a unital $*$-homomorphism $\pi:\fT_\fa \to B(\E)$ with the unique extension property with respect to $\A_\fa$ such that $\pi|_{\A_\fa}=\rho$.
\end{lemma}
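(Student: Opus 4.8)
The plan is to reduce this to a direct application of Theorem \ref{T:sphunitUEP}. That theorem takes as input a countable family $\F$ of commuting operators with $\sum_{T\in\F}TT^*\leq I$ generating (as a unital norm-closed algebra) the algebra in question, together with a unital completely contractive homomorphism $\rho$ that sends each $T$ to a normal operator and satisfies $\sum_{T\in\F}\rho(T)\rho(T)^*=I$; it then produces a $*$-homomorphism on the generated $\rC^*$-algebra with the unique extension property restricting to $\rho$. So the first thing I would do is identify the right family $\F$ on $\H_\fa$.

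The natural choice is dictated by Lemma \ref{L:projC}(i) and Lemma \ref{L:irred}(ii): take
\[
 \F=\left\{ b_{|\alpha|}^{1/2}\left(\frac{|\alpha|!}{\alpha!}\right)^{1/2} Z_\fa^\alpha : \alpha\in\bN_0^d,\ |\alpha|\geq 1\right\}.
\]
This is a countable collection of commuting operators on $\H_\fa$, and by Lemma \ref{L:irred}(ii) (which is itself the compression to $\H_\fa$ of Lemma \ref{L:projC}(i)) we have
\[
 \sum_{n=1}^\infty b_n\sum_{|\alpha|=n}\frac{n!}{\alpha!}Z_\fa^\alpha Z_\fa^{\alpha*}=I-P_{\H_\fa}P_{\bC 1}|_{\H_\fa}\leq I,
\]
so $\sum_{T\in\F}TT^*\leq I$ holds (the strong-operator convergence is part of the content of Lemma \ref{L:projC}(i)). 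Next I would check that the unital norm-closed algebra generated by $\F$ is exactly $\A_\fa$: it contains each $\fz_j$ (the case $\alpha=e_j$, using $b_1>0$) and is contained in $\A_\fa$ since every $Z_\fa^\alpha=\fz^\alpha$ lies in $\A_\fa$; since $\A_\fa$ is by definition the unital norm-closed algebra generated by $\fz_1,\dots,\fz_d$, the two coincide. Then, given the hypothesis that $(\rho(\fz_1),\dots,\rho(\fz_d))$ is a spherical unitary, I need: (a) $\rho(T)$ is normal for each $T\in\F$, and (b) $\sum_{T\in\F}\rho(T)\rho(T)^*=I$. For (a), since $\rho$ is a homomorphism, $\rho(Z_\fa^\alpha)=\rho(\fz_1)^{\alpha_1}\cdots\rho(\fz_d)^{\alpha_d}$; as the $\rho(\fz_j)$ are commuting normals they generate a commutative $\rC^*$-algebra on which $\rho(\fz_j)\mapsto$ its adjoint extends, so any monomial in them is normal. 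For (b), applying the bounded homomorphism $\rho$ to the (SOT-convergent, bounded-partial-sum) identity $\sum_n b_n\sum_{|\alpha|=n}\frac{n!}{\alpha!}Z_\fa^\alpha Z_\fa^{\alpha*}=I-P_{\H_\fa}P_{\bC 1}|_{\H_\fa}$ — or rather, working on the level of the commutative $\rC^*$-algebra generated by the $\rho(\fz_j)$ and using the spherical unitary relation $\sum_j\rho(\fz_j)\rho(\fz_j)^*=I$ together with maximality $\sum_{n\geq 1}b_n=1$ and the multinomial expansion $\sum_{|\alpha|=n}\frac{n!}{\alpha!}(\rho(\fz_j)\rho(\fz_j)^*)^\alpha=\big(\sum_j\rho(\fz_j)\rho(\fz_j)^*\big)^n=I$ — gives $\sum_{T\in\F}\rho(T)\rho(T)^*=\sum_{n\geq 1}b_n\cdot I=I$.

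Once (a) and (b) are verified, Theorem \ref{T:sphunitUEP} applies verbatim with $\A=\A_\fa$ and yields a unital $*$-homomorphism $\pi:\rC^*(\A_\fa)=\fT_\fa\to B(\E)$ with the unique extension property with respect to $\A_\fa$ and $\pi|_{\A_\fa}=\rho$, which is exactly the claim. I expect the main obstacle to be purely bookkeeping around the interchange of the infinite sum with the homomorphism $\rho$ in step (b): the partial sums $\sum_{n=1}^N b_n\sum_{|\alpha|=n}\frac{n!}{\alpha!}Z_\fa^\alpha Z_\fa^{\alpha*}$ are an increasing bounded sequence of positive operators converging SOT to $I-P_{\H_\fa}P_{\bC 1}|_{\H_\fa}$, but $\rho$ need only be norm-continuous, not SOT-continuous, so the cleanest route is to avoid transporting the operator identity and instead argue entirely inside the commutative $\rC^*$-algebra $\rC^*(\rho(\fz_1),\dots,\rho(\fz_d))$, where by Gelfand theory the relation $\sum_j|\hat{\rho(\fz_j)}|^2=1$ on the spectrum, combined with $\sum_{n\geq 1}b_n=1$ and the multinomial theorem, makes the series $\sum_{n\geq 1}b_n\sum_{|\alpha|=n}\frac{n!}{\alpha!}|\hat{Z_\fa^\alpha}|^2$ a (Weierstrass $M$-test) uniformly convergent series of continuous functions summing to the constant function $1$. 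This sidesteps any subtlety about the topology in which the series converges.
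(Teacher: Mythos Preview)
Your proposal is correct and follows essentially the same approach as the paper: define $T_\alpha=b_{|\alpha|}^{1/2}\bigl(\tfrac{|\alpha|!}{\alpha!}\bigr)^{1/2}Z_\fa^\alpha$, invoke Lemma~\ref{L:irred}(ii) for $\sum T_\alpha T_\alpha^*\leq I$, use the multinomial identity together with the spherical unitary relation and maximality $\sum_n b_n=1$ to get $\sum \rho(T_\alpha)\rho(T_\alpha)^*=I$, note normality of each $\rho(T_\alpha)$ (the paper cites Fuglede--Putnam, you use the equivalent fact that commuting normals generate a commutative $\rC^*$-algebra), and apply Theorem~\ref{T:sphunitUEP}. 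Your additional care about convergence is more than the paper provides but ultimately unnecessary, since the computation of $\sum \rho(T_\alpha)\rho(T_\alpha)^*$ is done directly via the multinomial expansion rather than by transporting the operator identity on $\H_\fa$.
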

\begin{proof}
The proof follows that of \cite[Proposition 6.1]{CH2018}.
For each multi-index $\alpha$ with $|\alpha|\geq 1$, we let $T_\alpha=b_{|\alpha|}^{1/2}\left( \frac{|\alpha|!}{\alpha!}\right)^{1/2}Z_\fa^\alpha$. Then, we find
\[
\sum_{|\alpha|\geq 1}T_\alpha T_\alpha^*\leq I
\]
by Lemma \ref{L:irred}(ii). On other hand, we see that
\begin{align*}
\sum_{|\alpha|\geq 1}\rho(T_\alpha)\rho(T_\alpha)^*&= \sum_{n=1}^\infty b_n\sum_{|\alpha|=n} \frac{n!}{\alpha !}\rho(Z_\fa)^\alpha \rho(Z_\fa)^{*\alpha}\\
&= \sum_{n=1}^\infty b_n \left(\sum_{j=1}^n \rho(\fz_j)\rho(\fz_j)^* \right)^n=\sum_{n=1}^\infty b_n I=I.
\end{align*}
Note also that each $\rho(T_\alpha)$ is normal by virtue of the Fuglede--Putnam theorem.
Because $\A_\fa$ is the norm-closed unital algebra generated by $\{T_\alpha:|\alpha|\geq 1\}$, we may apply Theorem \ref{T:sphunitUEP} to finish the proof.
\end{proof}

We note here that the maximality condition in the previous theorem cannot be removed; see \cite[Theorem 6.2(b)]{CH2018}.


\section{The connection between Gelfand transforms and boundary representations}\label{S:Gelfbdry}
The main goal of this section is to establish a link between the operator algebras $\A_\fa$ and $\M_\fa$ having a completely isometric Gelfand transform, and the identity representation being a boundary representation for them. As announced in the introduction, we will deal with the two algebras separately.

\subsection{The norm-closed algebra $\A_\fa$}\label{SS:Aa}

We first focus on $\A_\fa$.  For this purpose, we heavily draw from \cite{kennedyshalit2015} and adapt some of the results therein. 
Before proceeding, we introduce some notation. Given a compact subset $K\subset \bC^d$, we denote its \emph{polynomially convex hull} by $\widehat K$, so that $\lambda\in \widehat K$ if and only if
\[
 |p(\lambda)|\leq \max_{w\in K}|p(w)|
\]
for every polynomial $p$. We say that $K$ is \emph{polynomially convex} when $\widehat K=K$. It is well-known that if  $K\subset \ol{\bB_d}$, then $\widehat{K}\cap \bS_d=K\cap \bS_d$; this can readily be verified upon considering the polynomial
\[
 p_\zeta=\frac{1}{2}(1+\langle z,\zeta \rangle)
\]
which peaks at $\zeta\in \bS_d$.

We now identify the character spaces of $\A_\fa$ and $\fT_\fa$, thereby obtaining versions \cite[Proposition 2.1]{kennedyshalit2015} and \cite[Theorem 7.3]{CH2018} applicable to our context. Given a subset $\S\subset \A(\H)$, we write
\[
 \Z_{\ol{\bB_d}}(\S)=\{\lambda\in \ol{\bB_d}:\phi(\lambda)=0 \text{ for every }  \phi\in \S\}.
\]
Moreover, if $\S\subset \M(\H)$ we write
\[
 \Z_{\bB_d}(\S)=\{\lambda\in \bB_d:\psi(\lambda)=0 \text{ for every }  \psi\in \S\}.
\]

\begin{theorem}\label{T:specAaTa}
Let $\H$ be a  regular unitarily invariant complete Nevanlinna--Pick space. Let $\fa\subset \M(\H)$ be a proper weak-$*$ closed ideal. Let $\Delta(\A_\fa)$ and $\Delta(\fT_\fa)$ denote the character spaces of $\A_\fa$ and $\fT_\fa$ respectively.  Then, the following statements hold.
\begin{enumerate}[{\rm(i)}]
 \item For each $\lambda\in \widehat{\supp \fa}$, there is a unique character $\tau_\lambda\in \Delta(\A_\fa)$ such that 
 \[
  \tau_\lambda(\phi(Z_\fa))=\phi(\lambda), \quad \phi\in \A(\H).
 \]
 Moreover, we have that
 \[
  \Delta(\A_\fa)=\{\tau_\lambda:\lambda\in \widehat{\supp \fa}\}.
 \]

\item We have that
\[
 \{(\chi(\fz_1),\ldots,\chi(\fz_d)):\chi\in \Delta(\A_\fa)\}\subset \Z_{\ol{\bB_d}}(\fa\cap \A(\H)).
\]

\item Assume that $\H$ is maximal. Let $\chi\in\Delta(\A_\fa)$ and set $\zeta=(\chi(\fz_1),\ldots,\chi(\fz_d))$.
	If $\zeta\in\bS_d$, then there exists a unique $\pi_\zeta\in\Delta(\fT_\fa)$ such that 
	\[ \pi_\zeta(\phi(Z_\fa))=\chi(\phi(Z_\fa))=\phi(\zeta), \quad \phi\in\A(\H). \]

\item Assume that $\H$ is maximal.   If $\H_\fa$ is one-dimensional, then $\Delta(\A_\fa)=\Delta(\fT_\fa)$. If $\H_\fa$ has dimension greater than one, then 
\[
 \Delta(\fT_\fa)=\{\pi_\zeta:\zeta\in \supp \fa\cap \bS_d\}.
\]
\end{enumerate}
\end{theorem}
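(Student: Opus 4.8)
The plan is to prove the four assertions of Theorem~\ref{T:specAaTa} in order, using the $\rC^*$-algebra structure established in Lemma~\ref{L:irred} together with the identification of $\Delta(\A(\H))$ in \eqref{Eq:charspaceAH}.

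\textbf{Part (i).} Since $\A_\fa$ is generated as a norm-closed algebra by $\fz_1,\dots,\fz_d$, any character $\chi\in\Delta(\A_\fa)$ is determined by the tuple $\zeta=(\chi(\fz_1),\dots,\chi(\fz_d))\in\bC^d$: indeed $\chi(p(Z_\fa))=p(\zeta)$ for every polynomial $p$, and polynomials are dense. Thus characters correspond exactly to those $\zeta$ for which $p\mapsto p(\zeta)$ extends to a bounded (equivalently, contractive) character of $\A_\fa$, i.e. for which $|p(\zeta)|\le\|p(Z_\fa)\|$ for all polynomials $p$. The obvious candidate characters are $\tau_\lambda$ for $\lambda\in\supp\fa$, obtained by composing the Gelfand transform of $\M_\fa$ with $\Gamma_\fa$; these restrict to give points of $\Delta(\A_\fa)$. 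Since the character norm on polynomials is $\sup_{\chi\in\Delta(\A_\fa)}|p(\chi)|$, and since $\|p(Z_\fa)\|\ge\sup_{\lambda\in\supp\fa}|p(\lambda)|$, the set of $\zeta$ arising from $\Delta(\A_\fa)$ is contained in $\widehat{\supp\fa}$. Conversely, for $\lambda\in\widehat{\supp\fa}$ the estimate $|p(\lambda)|\le\max_{w\in\supp\fa}|p(w)|\le\|p(Z_\fa)\|$ shows $p\mapsto p(\lambda)$ is a contractive, hence bounded, character; uniqueness and the density of polynomials give a well-defined $\tau_\lambda\in\Delta(\A_\fa)$. This proves $\Delta(\A_\fa)=\{\tau_\lambda:\lambda\in\widehat{\supp\fa}\}$.

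\textbf{Part (ii).} If $\chi\in\Delta(\A_\fa)$ and $\psi\in\fa\cap\A(\H)$, then $\psi(Z_\fa)=\Gamma(\psi)=0$, so $\chi(\psi(Z_\fa))=0$; since $\psi\in\A(\H)$ extends continuously to $\ol{\bB_d}$ and $\chi(\psi(Z_\fa))=\psi(\zeta)$ by the density argument, we get $\zeta\in\Z_{\ol{\bB_d}}(\fa\cap\A(\H))$, after first noting $\zeta\in\ol{\bB_d}$ (which follows exactly as for $\Delta(\A(\H))$, using Lemma~\ref{L:projC}(i) as referenced in the text).

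\textbf{Part (iii).} This is where spherical unitaries enter and is the main technical point. Given $\chi\in\Delta(\A_\fa)$ with $\zeta\in\bS_d$, the plan is to realize $\chi$ (as a one-dimensional representation of $\A_\fa$) as the restriction of a one-dimensional $*$-representation of $\fT_\fa$. The tuple $(\chi(\fz_1),\dots,\chi(\fz_d))=\zeta$ has $\sum_j\zeta_j\overline{\zeta_j}=\|\zeta\|^2=1$, and on a one-dimensional space each $\chi(\fz_j)$ is trivially normal, so $(\chi(\fz_1),\dots,\chi(\fz_d))$ is a spherical unitary. By Lemma~\ref{L:sphunitUEPk} (using maximality of $\H$), $\chi$ extends to a unital $*$-homomorphism $\pi_\zeta:\fT_\fa\to\bC$ with the unique extension property; in particular $\pi_\zeta\in\Delta(\fT_\fa)$ and $\pi_\zeta(\phi(Z_\fa))=\phi(\zeta)$ for $\phi\in\A(\H)$. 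Uniqueness of $\pi_\zeta$ among characters of $\fT_\fa$ follows because a character of $\fT_\fa$ is determined by its values on the generators $\fz_j$ of $\A_\fa$ (their adjoints being sent to the conjugates), hence by $\zeta$.

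\textbf{Part (iv).} If $\H_\fa$ is one-dimensional, then $\fz_j$ are scalars and $\fT_\fa=\bC=\A_\fa$, so trivially $\Delta(\A_\fa)=\Delta(\fT_\fa)$. If $\dim\H_\fa>1$, then by Lemma~\ref{L:irred}(iii) $\fT_\fa$ is irreducible and contains $\fK(\H_\fa)$, which is non-commutative; hence the identity representation is not a character, and every character $\pi$ of $\fT_\fa$ must annihilate some non-zero compact operator, forcing $\fK(\H_\fa)\subset\ker\pi$ (as $\fK(\H_\fa)$ is a simple $\rC^*$-algebra — or: the only ideals of an irreducible $\rC^*$-algebra containing the compacts are $\{0\}$ and the whole algebra, and $\ker\pi\neq\fT_\fa$, $\ker\pi\supsetneq\{0\}$). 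Thus $\pi$ factors through $\O_\fa=\fT_\fa/\fK(\H_\fa)$. Composing the inclusion $\A_\fa\hookrightarrow\fT_\fa$ with the quotient $\fT_\fa\to\O_\fa$ and using Theorem~\ref{T:Toeplitz}-type boundary behaviour (the image of $\phi(Z_\fa)$ in $\O_\fa$ corresponds to $\phi|_{\supp\fa\cap\bS_d}$, which should be extracted from the interplay of Theorem~\ref{T:Toeplitz}, Theorem~\ref{T:lattice}, and the support description), one identifies $\Delta(\O_\fa)$ with $\supp\fa\cap\bS_d$. Pulling back gives $\Delta(\fT_\fa)=\{\pi_\zeta:\zeta\in\supp\fa\cap\bS_d\}$, where $\pi_\zeta$ is the character from part (iii) (every such $\zeta$ does arise from some $\chi\in\Delta(\A_\fa)$ since $\supp\fa\cap\bS_d\subset\widehat{\supp\fa}$). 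The containment $\Delta(\fT_\fa)\subset\{\pi_\zeta\}$ comes from the factorization through $\O_\fa$; the reverse containment is part (iii) applied to each $\zeta\in\supp\fa\cap\bS_d$.

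\textbf{Main obstacle.} The delicate point is part~(iv): showing that the characters of $\fT_\fa$ are in bijection with $\supp\fa\cap\bS_d$ (rather than with the larger polynomially convex hull appearing for $\A_\fa$) requires knowing precisely what $\O_\fa=\fT_\fa/\fK(\H_\fa)$ looks like, and in particular that passing to the Calkin quotient collapses the interior of $\supp\fa$. I expect this to rest on a lifting/compactness argument: $I-\sum_j\fz_j\fz_j^*$ is compact (Lemma~\ref{L:irred}(i)), so in $\O_\fa$ the images of the $\fz_j$ form a spherical-type tuple, forcing any character of $\fT_\fa$ to send $\zeta$ into $\bS_d$; combined with part~(i) this pins down $\supp\fa\cap\bS_d=\widehat{\supp\fa}\cap\bS_d$ as the character space of $\fT_\fa$. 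Part~(iii)'s invocation of Lemma~\ref{L:sphunitUEPk} is the other essential ingredient and is where maximality of $\H$ is genuinely used.
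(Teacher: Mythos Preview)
Your proof is correct and follows essentially the same route as the paper. Parts (i)--(iii) match the paper's argument closely (your (i) is a direct unpacking of what the paper obtains by citing \cite{muller2007}, and your (ii) is in fact slightly more direct than the paper's version).

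For part (iv), note that the clean argument is the one you placed in your ``Main obstacle'' paragraph, and that is exactly what the paper does: once a character $\chi$ of $\fT_\fa$ annihilates $\fK(\H_\fa)$, compactness of $I-\sum_j\fz_j\fz_j^*$ forces $\|\zeta\|=1$, so $\zeta\in\widehat{\supp\fa}\cap\bS_d=\supp\fa\cap\bS_d$, and then $\chi=\pi_\zeta$ by uniqueness. The detour in the body of your part (iv)---trying to identify $\Delta(\O_\fa)$ via a ``Theorem~\ref{T:Toeplitz}-type boundary behaviour'' that ``should be extracted''---is unnecessary and left vague; simply lead with the compactness argument and drop that passage.
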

\begin{proof}
	(i) By \cite[Theorem I.3.10 (vii)]{muller2007} and by our definition of the support, we see that
\[
 \{(\chi(\fz_1),\ldots,\chi(\fz_d)):\chi\in \Delta(\A_\fa)\}=\widehat{\supp \fa}.
\]
Furthermore, by density of the polynomials in $\A(\H)$, a character $\chi\in \Delta(\A_\fa)$ is uniquely determined by its values on $\fz_1,\ldots,\fz_d$. Both statements follow from this.

(ii) We first make a preliminary observation. Let $w\in \supp \fa$. By (i), we have that $\tau_w\in \Delta(\A_\fa)$. Hence
\[
 \phi(w)=\tau_w(\phi(Z_\fa))=0, \quad \phi\in \fa\cap \A(\H).
\]
This shows that $\supp \fa\subset \Z_{\ol{\bB_d}}(\fa\cap\A(\H))$. Next, let $\chi\in \Delta(\A_\fa)$.
By (i), there is $\lambda\in \widehat{\supp \fa}$ such that $\chi=\tau_\lambda$.
It follows that 
\[
 |\phi(\lambda)|\leq \max_{w\in\supp\fa}|\phi(w)|, \quad \phi\in \A(\H)
 \]
 whence $\lambda\in \Z_{\ol{\bB_d}}(\fa\cap \A(\H))$ by the previous observation. We conclude that
\[
 \{(\chi(\fz_1),\ldots,\chi(\fz_d)):\chi\in \Delta(\A_\fa)\}\subset \Z_{\ol{\bB_d}}(\fa\cap \A(\H)).
\]

 (iii) Note that
	\[
		\sum_{j=1}^d |\chi(\fz_j)|^2=\|\zeta\|^2_{\bC^d}=1,
	\]
	so we may invoke Lemma \ref{L:sphunitUEPk} to see that there is a unique $\pi_\zeta\in\Delta(\fT_\fa)$ such that 
	\[ \pi_\zeta(\phi(Z_\fa))=\chi(\phi(Z_\fa))=\phi(\zeta), \quad \phi\in\A(\H). \]

(iv) If $\H_\fa$ is one-dimensional, then clearly $\A_\fa=\fT_\fa$. Suppose therefore that $\H_\fa$ has dimension greater than one. We wish to show that 
\[
 \Delta(\fT_\fa)=\{\pi_\zeta:\zeta\in \supp \fa\cap \bS_d\}.
\]
One inclusion follows from (iii). To see the reverse inclusion, let $\chi\in \Delta(\fT_\fa)$ and put 
\[
\zeta=(\chi(\fz_1),\ldots,\chi(\fz_d)).
\]
Then, $\chi|_{\A_\fa}\in \Delta(\A_\fa)$ so that $\zeta\in \widehat{\supp\fa}$ and $\chi|_{\A_\fa}=\tau_\zeta$ by (i). 
By Lemma \ref{L:irred}, we see that $\fT_\fa$ contains the ideal $\fK(\H_\fa)$ of compact operators and that $I-\sum_{j=1}^d \fz_j \fz_j^*$ is compact. Since $\H_\fa$ has dimension greater than one, it follows from Lemma \ref{L:compactsplit} that $\fK(\H_\fa)\subset \ker \chi$, whence
\[
\|\zeta\|_{\bC^d}^2= \sum_{j=1}^d |\chi(\fz_j)|^2=1
\]
and $\zeta\in \bS_d$. Thus
\[
 \zeta\in \widehat{\supp \fa}\cap \bS_d=\supp \fa\cap \bS_d
\]
in light of the remark preceding the theorem. Finally, the equality $\chi|_{\A_\fa}=\tau_\zeta$ implies $\chi=\pi_\zeta$ since $\fT_\fa=\rC^*(\A_\fa)$.

\end{proof}

We now aim to calculate the essential norm of an element of $\A_\fa$, using the same argument as in \cite[Theorem 3.3]{kennedyshalit2015}. 
To this end, recall that $\fT_\fa$ always contains the ideal $\fK(\H_\fa)$ of compact operators by Lemma \ref{L:irred}, and that we write $\O_\fa=\fT_\fa/\fK(\H_\fa)$. We use the symbols $g_{\fB}$ to denote the Gelfand transform of a commutative unital Banach algebra $\fB$.

\begin{theorem}\label{T:Aaessnorm}
	Let $\H$ be a  maximal regular unitarily invariant complete Nevanlinna--Pick space. Let $\fa\subset \M(\H)$ be a proper weak-$*$ closed ideal and let $q:\fT_\fa\to \O_\fa$ denote the quotient map. Let $n\geq 1$ and let $A=[\phi_{ij}(Z_\fa)]_{ij}\in \bM_n(\A_\fa)$.  Then, we have that
	\begin{equation}\label{Eq:QltTltA}
		\|q^{(n)}(A)\|\leq \|g^{(n)}_{\fT_\fa}(A)\| \leq \|g^{(n)}_{\A_\fa}(A)\|.
	\end{equation}
	If we assume in addition that $\H_\fa$ has dimension greater than one, then we have
	\[
		\|q^{(n)}(A)\|= \|g_{\fT_\fa}^{(n)}(A)\| = \|g_{\O_\fa}^{(n)}(q^{(n)}(A))\| = \|g_{q(\A_\fa)}^{(n)}(q^{(n)}(A))\| .
	\]
	In particular, the Gelfand transform of $q(\A_\fa)$ is completely isometric in this case.
\end{theorem}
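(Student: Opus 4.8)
The second inequality in \eqref{Eq:QltTltA} is immediate: each character of $\fT_\fa$ restricts to a unital, hence non-zero, character of $\A_\fa$, so that $\|g_{\fT_\fa}^{(n)}(A)\|=\sup_{\chi\in\Delta(\fT_\fa)}\|\chi^{(n)}(A)\|\le\sup_{\psi\in\Delta(\A_\fa)}\|\psi^{(n)}(A)\|=\|g_{\A_\fa}^{(n)}(A)\|$. My plan is to reduce everything else to the single inequality $\|q^{(n)}(A)\|\le\|g_{\fT_\fa}^{(n)}(A)\|$. Indeed, suppose this holds and $\dim\H_\fa>1$. By Theorem \ref{T:specAaTa}(iv) and the proof of that result (through Lemma \ref{L:compactsplit}), every character of $\fT_\fa$ is a $\pi_\zeta$ with $\zeta\in\supp\fa\cap\bS_d$ and annihilates $\fK(\H_\fa)$, so it factors through $q$ as a character of $\O_\fa$; and every character of $\O_\fa$ arises this way by composing with $q$. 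This bijection gives $\|g_{\fT_\fa}^{(n)}(A)\|=\|g_{\O_\fa}^{(n)}(q^{(n)}(A))\|$, while restricting the induced characters of $\O_\fa$ to $q(\A_\fa)$ gives $\|g_{q(\A_\fa)}^{(n)}(q^{(n)}(A))\|\ge\|g_{\fT_\fa}^{(n)}(A)\|$; since the Gelfand transforms of $\O_\fa$ and of $q(\A_\fa)$ are completely contractive, all three of these quantities are $\le\|q^{(n)}(A)\|$, so, together with the assumed inequality, they all coincide with $\|q^{(n)}(A)\|$. As $q|_{\A_\fa}$ maps onto $q(\A_\fa)$, this also proves that the Gelfand transform of $q(\A_\fa)$ is completely isometric.

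It therefore remains to establish $\|q^{(n)}(A)\|\le\|g_{\fT_\fa}^{(n)}(A)\|$, and here I would follow the argument of \cite[Theorem 3.3]{kennedyshalit2015}. If $\H_\fa$ is one-dimensional the inequality is trivial ($q$ is the identity and $\fT_\fa=\A_\fa$), and if $\H_\fa$ is finite-dimensional of dimension $>1$ then $\fK(\H_\fa)=B(\H_\fa)\subseteq\fT_\fa$ forces $\O_\fa=\{0\}$ and there is nothing to do; so assume $\H_\fa$ is infinite-dimensional. Fix a faithful unital $*$-representation $\sigma\colon\fT_\fa\to B(\E)$ annihilating $\fK(\H_\fa)$ (for instance a faithful representation of $\O_\fa$ composed with $q$), so that $\|q^{(n)}(A)\|=\|\sigma^{(n)}(A)\|$, and set $T_j=\sigma(\fz_j)$. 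By Lemma \ref{L:irred}(i), $I-\sum_{j=1}^d\fz_j\fz_j^*$ is compact, so $\sum_{j=1}^d T_jT_j^*=I$; that is, $(T_1^*,\dots,T_d^*)$ is a spherical isometry. By a theorem of Athavale it is jointly subnormal, and its minimal normal extension $(N_1,\dots,N_d)$ on some $\hat\E\supseteq\E$ is a spherical unitary with $\E$ invariant for each $N_j$ and $T_j=P_\E N_j^*|_\E$.

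Next I would exploit this model. Since $\E$ is invariant for each $N_j$, it is co-invariant, hence semi-invariant, for the unital norm-closed algebra generated by $N_1^*,\dots,N_d^*$, so compression to $\E$ is multiplicative there (Sarason). Approximating the $\phi_{ij}$ by polynomials and using \eqref{Eq:supnorm} together with the fact that the joint spectrum of $(N_1,\dots,N_d)$ lies in $\bS_d$, one obtains normal operators $\phi_{ij}(N^*)$ in $\rC^*(N_1^*,\dots,N_d^*)$ with $\sigma(\phi_{ij}(Z_\fa))=P_\E\phi_{ij}(N^*)|_\E$; consequently $\sigma^{(n)}(A)=P_{\E^{(n)}}[\phi_{ij}(N^*)]|_{\E^{(n)}}$, and hence
\[
 \|\sigma^{(n)}(A)\|\le\bigl\|[\phi_{ij}(N^*)]\bigr\|_{\bM_n(\rC^*(N_1^*,\dots,N_d^*))}=\sup_{v\in\overline Y}\bigl\|[\phi_{ij}(v)]\bigr\|,
\]
where $Y$ is the joint spectrum of $(N_1,\dots,N_d)$, $\overline Y$ its coordinatewise conjugate, and the last equality uses $\rC^*(N_1^*,\dots,N_d^*)\cong C(\overline Y)$ under which $\phi_{ij}(N^*)\mapsto\bigl(v\mapsto\phi_{ij}(v)\bigr)$.

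The step I expect to be the main obstacle is showing $\overline Y\subseteq\supp\fa\cap\bS_d$; this is precisely where the potential non-commutativity of $\O_\fa$ gets absorbed. Because $(N_1,\dots,N_d)$ is the minimal normal extension of the subnormal tuple $(T_1^*,\dots,T_d^*)$, its Taylor joint spectrum $Y$ is contained in $\Spec(T_1^*,\dots,T_d^*)=\overline{\Spec(T_1,\dots,T_d)}$, so $\overline Y\subseteq\Spec(T_1,\dots,T_d)=\Spec(\sigma(\fz_1),\dots,\sigma(\fz_d))$. Since $\sigma|_{\A_\fa}$ is a unital homomorphism and the Taylor spectrum does not grow under such maps, this last set is contained in the joint spectrum of $(\fz_1,\dots,\fz_d)$ relative to $\A_\fa$, which equals $\widehat{\supp\fa}$ by Theorem \ref{T:specAaTa}(i). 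As $\overline Y\subseteq\bS_d$ and $\widehat{\supp\fa}\cap\bS_d=\supp\fa\cap\bS_d$ (the remark preceding Theorem \ref{T:specAaTa}), we conclude $\overline Y\subseteq\supp\fa\cap\bS_d$. Feeding this into the displayed estimate and using $\Delta(\fT_\fa)=\{\pi_\zeta:\zeta\in\supp\fa\cap\bS_d\}$ from Theorem \ref{T:specAaTa}(iv), we obtain $\|q^{(n)}(A)\|=\|\sigma^{(n)}(A)\|\le\sup_{v\in\supp\fa\cap\bS_d}\|[\phi_{ij}(v)]\|=\|g_{\fT_\fa}^{(n)}(A)\|$, completing the argument. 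The difficulty is genuinely in this localization: a crude bound would only give $\sup_{\bS_d}\|[\phi_{ij}]\|$, which is too large, and it is the spectral inclusion above that makes $q(\A_\fa)$ behave like a uniform algebra even though $\O_\fa$ need not be commutative.
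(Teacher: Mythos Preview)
Your proof is correct and follows essentially the same route as the paper's: dilate $q(Z_\fa)$ (equivalently $\sigma(Z_\fa)$) to a spherical unitary via Athavale, invoke Putinar's spectral inclusion for the minimal normal extension, pass to the adjoint tuple, and then bound the Taylor spectrum by the algebraic joint spectrum in the commutative algebra generated by the image to land in $\widehat{\supp\fa}\cap\bS_d=\supp\fa\cap\bS_d$; the identification of $\Delta(\fT_\fa)$ with $\supp\fa\cap\bS_d$ then finishes. The only cosmetic differences are that you introduce an explicit faithful representation $\sigma$ of $\O_\fa$ (the paper simply writes $\O_\fa\subset B(\E)$) and that you separate the finite-dimensional cases up front. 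One phrasing to tighten: ``the Taylor spectrum does not grow under such maps'' is not quite a standard statement---what you are really using is that $\sigma_{\Ta}(T)$ is contained in the algebraic joint spectrum of $T$ in the commutative Banach algebra $\overline{\sigma(\A_\fa)}$ (this is the Müller result the paper cites), together with the trivial fact that characters of $\sigma(\A_\fa)$ pull back along $\sigma$ to characters of $\A_\fa$.
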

\begin{proof}
	In what follows, when $\chi$ is a character of an algebra containing $\fz_1,\ldots,\fz_d$, we put $\chi(Z_\fa)=(\chi(\fz_1),\ldots,\chi(\fz_d))$, with similar natural abbreviations being used where appropriate.
	
	We may assume that $\O_\fa\subset B(\E)$ for some Hilbert space $\E$.
	It follows from Lemma \ref{L:irred}(i) that
	\[
 		I=\sum_{j=1}^d q(\fz_j) q(\fz_j)^*.
	\]
	By \cite[Proposition 2]{athavale1990}, we see that there is a Hilbert space $\E'$ containing $\E$ along with a spherical unitary $U=(U_1,\ldots,U_d)$ acting on $\E'$ and leaving $\E$ co-invariant, with the property that $U^*_j|_{\E}=q(\fz_j)^*$ for every $1\leq j\leq d$. 
	Requiring that $U^*$ be minimal among the normal dilations of $q(Z_{\fa})^*$, we can arrange that
	\[
		\sigma_{\Ta}(U^*)\subset \sigma_{\Ta}(q(Z_\fa)^*)
	\]
	by virtue of \cite{putinar1984}. In turn, as explained in \cite[Section 3]{richter2010} this implies
	\[
		\sigma_{\Ta}(U)\subset \sigma_{\Ta}(q(Z_\fa)).
	\]
	On the other hand, we always have that
	\[
		\sigma_{\Ta}(q(Z_\fa))\subset \{\chi(q(Z_\fa)):\chi\in \Delta(q(\A_\fa))\}
	\]
	by \cite[Proposition IV.25.3]{muller2007}.
	Now, given $\chi\in \Delta(q(\A_\fa))$, we see that $\chi\circ q\in \Delta(\A_\fa)$ and so
	\[
		\sigma_{\Ta}(q(Z_\fa))\subset \{\chi(Z_\fa):\chi\in \Delta(\A_\fa)\}.
	\]
	In particular, we obtain
	\[
		\sigma_{\Ta}(q(Z_\fa)) \cap \bS_d \subset \{\chi(Z_\fa):\chi\in \Delta(\A_\fa)\}\cap \bS_d \subset \{\chi(Z_\fa):\chi\in \Delta(\fT_\fa)\}
	\]
	where the last equality follows Theorem \ref{T:specAaTa}(iii).
	Since
	\[
		\sigma_{\Ta}(U)=\{\chi(U):\chi\in \Delta(\rC^*(U))\}\subset \bS_d
	\]
	by \cite[Proposition 7.2]{curto1988}, we find
	\begin{equation}\label{Eq:TSpecU}
		\sigma_{\Ta}(U)\subset\sigma_{\Ta}(q(Z_\fa)) \cap \bS_d \subset  \{\chi(Z_\fa):\chi\in \Delta(\fT_\fa)\} \subset \{\chi(Z_\fa):\chi\in\Delta(\A_\fa)\}.
	\end{equation}
	Observe now that
	\[
		q^{(n)}(A) =[q(\phi_{ij}(Z_\fa))]_{ij}=[(\phi_{ij}(q(Z_\fa))]_{ij}=P_{\E^{(n)}}[\phi_{ij}(U)]_{ij}|_{\E^{(n)}}
	\]
	so we find
	\[
		 \| q^{(n)}(A)\|\leq \|[\phi_{ij}(U)]_{ij}\|.
	\]
	Next, we know that $\bM_n(\rC^*(U))$ is $*$-isomorphic to $C(\Delta(\rC^*(U)),\bM_n)$ and
	\begin{align*}
		\|[\phi_{ij}(U)]_{ij}\|_{\bM_n(\rC^*(U))} &= \max\{ \|[\phi_{ij}(\chi(U))]_{ij}\|_{\bM_n}:\chi\in \Delta(\rC^*(U)) \} \\
		&=\max\{ \|[\phi_{ij}(\zeta)]_{ij}\|_{\bM_n}:\zeta\in\sigma_{\Ta}(U) \}.
	\end{align*}
	Combining these facts with \eqref{Eq:TSpecU}, we have
	\begin{align*}
		\|q^{(n)}(A)\| &\leq  \max\{\|[\phi_{ij}(\chi(Z_\fa))]_{ij}\|:\chi\in \Delta(\fT_\fa)\} \\
									 &\leq \max \{\|[\phi_{ij}(\chi(Z_\fa))]_{ij}\|:\chi\in\Delta(\A_\fa)\},
	\end{align*}
	which is \eqref{Eq:QltTltA}.

	If $\H_\fa$ has dimension greater than one, then every character of $\fT_\fa$ must annihilate $\fK(\H_\fa)$ by Lemma \ref{L:compactsplit}. Thus, given $\chi\in \Delta(\fT_\fa)$, we can find a character $\widehat\chi\in \Delta(\O_\fa)$ such that $\chi=\widehat\chi\circ q$.
	This implies that
	\[
		\|g^{(n)}_{\fT_\fa}(A)\| = \|(g_{\O_\fa}\of q)^{(n)}(A)\| \leq \|(g_{q(\A_\fa)}\of q)^{(n)}(A)\| \leq \|q^{(n)}(A)\|. 
	\]
	Applying \eqref{Eq:QltTltA}, we conclude that
	\[ 
		\|q^{(n)}(A)\|=\|g^{(n)}_{\fT_\fa}(A)\| = \|(g_{\O_\fa}\of q)^{(n)}(A)\| = \|(g_{q(\A_\fa)}\of q)^{(n)}(A)\|
	\]
	in this case.
\end{proof}

We note that the second (stronger) assertion of the previous theorem fails when $\H_\fa$ is one-dimensional. Indeed, in that case we have $q(I)=0$ while the identity map is a character of $\fT_\fa$. Furthermore, we mention that the method of proof above does not typically adapt to apply to $\M_\fa$, in view of the following example.

\begin{example}\label{E:cpactquotientFX}
Let $d\geq 2$ and let $\H$ be the Drury--Arveson space on $\bB_d$. Let $\fa=\{0\}$ so that $\H=\H_\fa$ and $\M_\fa=\M(\H)$. Let $q:\rC^*(\M(\H))\to \rC^*(\M(\H))/\fK(\H)$ be the quotient map. By \cite[Theorem 1.1]{FX2011}, there is $\psi\in \M(\H)$ such that
\[
 \|q(M_\psi)\|>\sup_{z\in \bB_d}|\psi(z)|.
\]
Next, let $\chi:q(\M(\H))\to \bC$ be a character.
Then, $\chi\circ q|_{\M(\H)}$ is a character of $\M(\H)$, so by the Corona theorem \cite{costea2011}, we see that 
\[
 |\chi(q(M_\psi))|\leq \sup_{z\in \bB_d}|\psi(z)|.
\]
We conclude that 
\[
	\|q(M_\psi)\|>\max\{ |\chi(q(M_\psi))|:\chi\in \Delta(q(\M(\H))\}.
\]
\qed
\end{example}

Before we give some consequences of Theorem \ref{T:Aaessnorm}, we require a maximum modulus principle for $\M_\fa$ that is of independent interest.

\begin{theorem}\label{T:maximummodulus}
Let $\H$ be a  regular unitarily invariant complete Nevanlinna--Pick space. Let $\fa\subset \M(\H)$ be a proper weak-$*$ closed ideal and let $\lambda\in \Z_{\bB_d}(\fa)$. Let $M=[\psi_{ij}(Z_\fa)]_{ij}\in \bM_n(\M_\fa)$ be such that $\|M\|=\| [\psi_{ij}(\lambda)]_{ij}\|_{\bM_n}$. Then, there are unit vectors $v=(v_1,\ldots,v_n)$ and $w=(w_1,\ldots,w_n)$ in $\bC^n$ with the property that
 \[
 \sum_{j=1}^n v_j\psi_{ij}(Z_\fa)   =\|M\| w_i I_{\H_\fa}, \quad 1\leq i\leq n.
\]
\end{theorem}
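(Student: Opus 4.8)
The plan is to reduce this maximum-modulus principle for $\M_\fa$ to a statement about the reproducing kernel vector at $\lambda$, exploiting the fact that $\lambda \in \Z_{\bB_d}(\fa)$ forces $k_\lambda$ (suitably normalized) to lie in $\H_\fa$. First I would recall from the discussion after Theorem \ref{T:lattice} that $\supp\fa\cap\bB_d = \Z_{\bB_d}(\fa)$, and since $[\fa\H]$ is the closed span of ranges $\ran M_\phi$ with $\phi\in\fa$, every $\phi\in\fa$ vanishes at $\lambda$; hence $\langle M_\phi f, k_\lambda\rangle = (M_\phi f)(\lambda) = 0$ for all $f\in\H$, so $k_\lambda \perp [\fa\H]$, i.e.\ $k_\lambda\in\H_\fa$. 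Write $e = k_\lambda/\|k_\lambda\|$ for the unit vector in $\H_\fa$. The key computational fact is then that $e$ is a common eigenvector for the adjoints: $M_\psi^* k_\lambda = \overline{\psi(\lambda)} k_\lambda$ for every $\psi\in\M(\H)$, and since $\H_\fa$ is co-invariant, $\psi(Z_\fa)^* e = \Gamma(\psi)^* e = P_{\H_\fa} M_\psi^* e = \overline{\psi(\lambda)}\, e$. Equivalently $\langle \psi(Z_\fa) x, e\rangle = \psi(\lambda) \langle x, e\rangle$ for all $x\in\H_\fa$.

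Next I would set up the extremality argument at the level of $\bM_n$. Write $c = \|M\| = \|[\psi_{ij}(\lambda)]_{ij}\|_{\bM_n}$. Since the matrix $[\psi_{ij}(\lambda)]$ has operator norm exactly $c$, there are unit vectors $v = (v_1,\dots,v_n)$ and $w = (w_1,\dots,w_n)$ in $\bC^n$ achieving it: $[\psi_{ij}(\lambda)]\, \bar v = c\, w$ (I will choose the conjugation conventions so the final formula comes out as stated), equivalently $\sum_j v_j \psi_{ij}(\lambda) = c\, w_i$ for each $i$. Now consider the vector $\xi = \bar v \otimes e = (v_1 e, \dots, v_n e) \in \H_\fa^{(n)}$, which has norm $1$. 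I would compute $M^{(n)*}$ acting on the vector $w\otimes e = (w_1 e,\dots,w_n e)$: using the eigenvalue relation $\psi_{ij}(Z_\fa)^* e = \overline{\psi_{ij}(\lambda)} e$, one gets $M^* (w\otimes e) = \big(\sum_i \overline{\psi_{ij}(\lambda)} w_i\big)_j \otimes e$, and by the extremality of $v,w$ (taking adjoints of the matrix relation) this equals $\bar c\, \bar v \otimes e = \bar c\, \xi$. Hence $\|M^*(w\otimes e)\| = c = \|M\|\cdot\|w\otimes e\|$, so $w\otimes e$ is a norm-attaining vector for $M^*$, and $M\xi = M(\text{suitably})$... more precisely $M^* M (w\otimes e) = c^2 (w\otimes e)$, so $M(w\otimes e)$ has norm $c$ and $M^* M(w\otimes e) = \|M\|^2 (w\otimes e)$.

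The crucial step — and the one I expect to be the main obstacle — is passing from "$w\otimes e$ is a maximal vector for $M$" to the rigid conclusion that $\sum_j v_j \psi_{ij}(Z_\fa) = c\, w_i I_{\H_\fa}$ as operators, not merely on a single vector. This is exactly a multiplier-algebra rigidity phenomenon: the point is that $\sum_j v_j \psi_{ij}(Z_\fa)$, call it $\eta_i(Z_\fa)$ where $\eta_i = \sum_j v_j \psi_{ij}\in\M(\H)$, is a multiplier with $\|[\eta_i(Z_\fa)]_i^{\text{col}}\| \le \|M\| = c$ (as a column operator $\H_\fa\to\H_\fa^{(n)}$) yet $\sum_i |\eta_i(\lambda)|^2 = c^2$; combined with $\sum_i |w_i|^2 = 1$ and the relation $\eta_i(\lambda) = c\, w_i$. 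I would argue as follows: the column operator $C = (\eta_1(Z_\fa),\dots,\eta_n(Z_\fa))^T$ satisfies $\|C\|\le c$, and $C^* C = \sum_i \eta_i(Z_\fa)^* \eta_i(Z_\fa) \le c^2 I$. Applying to $e$: $\langle C^*C e, e\rangle = \sum_i \|\eta_i(Z_\fa) e\|^2 \ge \sum_i |\langle \eta_i(Z_\fa) e, e\rangle|^2 = \sum_i |\eta_i(\lambda)|^2 = c^2$. So $e$ is a maximal vector for $C$, forcing $\eta_i(Z_\fa) e = c\, w_i e$ for each $i$ (equality in Cauchy–Schwarz, using $\|Ce\| = c = \|C\|\|e\|$ and that the coordinates of $Ce$ must be proportional to those forced by the inner products). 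But now I invoke the key rigidity: a multiplier $\phi$ with $\|\phi(Z_\fa)\|\le c$ whose adjoint fixes $e$ up to the scalar forced by $\|\phi(Z_\fa) e\| = c\|e\|$ must be a scalar multiple of the identity — this is where one needs either an inner–outer type argument or the observation that $\phi(Z_\fa) - c w_i I$ has $e$ in its kernel and, being the compression of a multiplier of norm $\le c$ around the extremal point $\lambda$, is forced to vanish identically. Concretely: $\|\eta_i(Z_\fa)\| \le c$ and $\eta_i(Z_\fa) e = c w_i e$ with $|w_i|\le 1$; if $|w_i| = 1$ this says $e$ is a maximal vector so $\eta_i(Z_\fa)^* e = \overline{cw_i} e$ too, but $\eta_i(Z_\fa)^* e = \overline{\eta_i(\lambda)} e = \overline{c w_i} e$ automatically — the content is that $\eta_i(Z_\fa)$ restricted to the cyclic subspace generated by $e$ is the scalar $c w_i$, and then one must upgrade this to all of $\H_\fa$. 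I would handle the upgrade by considering, for arbitrary $\psi\in\M(\H)$, the product $\eta_i(Z_\fa)\psi(Z_\fa) e = \eta_i(Z_\fa) \big(P_{\H_\fa}(\psi e)\big)$ and comparing norms: since $\eta_i\psi$ is still a multiplier with $\|(\eta_i\psi)(Z_\fa)\| \le c\|\psi(Z_\fa)\|$ and $(\eta_i\psi)(\lambda) = c w_i \psi(\lambda)$, the same extremality argument gives $\eta_i(Z_\fa)\psi(Z_\fa) e = c w_i \psi(Z_\fa) e$; since vectors of the form $\psi(Z_\fa) e = P_{\H_\fa}(\psi k_\lambda)$ are dense in $\H_\fa$ (as $\M(\H) k_\lambda$ is dense in $\H$ — indeed $\M(\H)$ is dense in $\H$ and $k_\lambda$ is a cyclic-type vector, or more simply $\M(\H)\cdot 1$ is dense and one can transfer), we conclude $\eta_i(Z_\fa) = c w_i I_{\H_\fa}$. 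The one gap to fill carefully is the density of $\{\psi(Z_\fa) e : \psi\in\M(\H)\}$ in $\H_\fa$, for which I would note $\{\psi(Z_\fa) P_{\H_\fa} 1 : \psi\}$ — and $P_{\H_\fa} 1 \ne 0$ by Lemma \ref{L:properideal}(i) since $\fa$ is proper — is the image under $\Gamma$ of a dense subalgebra acting on a non-zero vector, hence dense by the standard argument that $\M_\fa$ is WOT-dense-generated and $\H_\fa$ has no proper reducing subspace containing $P_{\H_\fa}1$; alternatively use that $k_\lambda = $ const $\cdot\, \lim \psi_n(Z_\fa)^* (\cdots)$, whichever is cleanest, and this should follow from cyclicity properties already implicit in the setup. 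Assuming that density, the proof closes.
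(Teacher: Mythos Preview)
Your reduction to the eigenvector identity $\eta_i(Z_\fa)\,e = c\,w_i\,e$ (with $\eta_i=\sum_j v_j\psi_{ij}$, $e=k_\lambda/\|k_\lambda\|$, $c=\|M\|$) is correct and matches the paper's first step. The gap is in the ``upgrade'' to all of $\H_\fa$. Your claim that ``the same extremality argument'' applies to $\eta_i\psi$ does not work: for the column $C_\psi=(\eta_1\psi,\ldots,\eta_n\psi)(Z_\fa)$ you only obtain $\|C_\psi e\|^2\ge c^2|\psi(\lambda)|^2$ while $\|C_\psi\|\le c\|\psi(Z_\fa)\|$, and these bounds do not meet unless $|\psi(\lambda)|=\|\psi(Z_\fa)\|$. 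There is a trivial repair you overlooked---$\M_\fa$ is commutative, so $\eta_i(Z_\fa)\psi(Z_\fa)e=\psi(Z_\fa)\eta_i(Z_\fa)e=c\,w_i\,\psi(Z_\fa)e$ immediately---but then you still need cyclicity of $e$ for $\M_\fa$, and your sketch for that confuses $e=k_\lambda/\|k_\lambda\|$ with $P_{\H_\fa}1$: the latter is indeed cyclic for the reason you give, but it is the former you need.

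The paper's argument bypasses both issues with one stroke. Having established $(\eta_i-c\,w_i)k_\lambda\in[\fa\H]$, it invokes Lemma~\ref{L:projC}(ii): the function $1/k_\lambda$ is a multiplier, and since $[\fa\H]$ is $\M(\H)$-invariant this yields $\eta_i-c\,w_i\in[\fa\H]$. Theorem~\ref{T:lattice} then gives $\eta_i-c\,w_i\in\fa$, hence $\eta_i(Z_\fa)=c\,w_i\,I_{\H_\fa}$. Note that the same fact $1/k_\lambda\in\M(\H)$ is precisely what you would need to establish cyclicity of $k_\lambda$ cleanly (since then $\M(\H)k_\lambda=\M(\H)$ is dense in $\H$), so Lemma~\ref{L:projC}(ii) is the missing ingredient in either approach; the paper simply uses it more directly.
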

\begin{proof}
Put $M(\lambda)=[\psi_{ij}(\lambda)]_{ij}$. By assumption, we may choose unit vectors $v,w\in \bC^n$ with the property that
\[
 \langle  M(\lambda)v,w\rangle_{\bC^n}=\|M\|.
\]
Because $\lambda\in \Z_{\bB_d}(\fa)$, we have $k_\lambda\in \H_\fa$.
Define unit vectors $\xi,\eta\in\H_\fa^{(n)}$ by
\[
 \xi=\left( v_1 \frac{k_\lambda}{\|k_\lambda\|}, \ldots,  v_n \frac{k_\lambda}{\|k_\lambda\|}\right), \quad  \eta=\left( w_1 \frac{k_\lambda}{\|k_\lambda\|}, \ldots,  w_n \frac{k_\lambda}{\|k_\lambda\|}\right).
\]
We compute
\begin{align*}
 \langle M\xi,\eta \rangle_{\H_\fa^{(n)}}&=\sum_{i,j=1}^n \frac{1}{\|k_\lambda\|^2}v_j \ol{w_i} \langle \psi_{ij}(Z_\fa)k_\lambda,k_\lambda\rangle_{\H_\fa}\\
 &=\sum_{i,j=1}^n v_j \ol{w_i}\psi_{ij}(\lambda)\\
 &=\langle M(\lambda)v,w\rangle_{\bC^n}=\|M\|.
\end{align*}
By the Cauchy-Schwarz inequality, we infer that $M\xi=\|M\| \eta$.
In other words, for each $1\leq i\leq n$ we find
\[
 \sum_{j=1}^n \psi_{ij}(Z_\fa) v_j k_\lambda=\|M\| w_i k_\lambda
\]
which is equivalent to
\[
 \left(\sum_{j=1}^n v_j \psi_{ij}-\|M\| w_i\right)k_\lambda\in [\fa\H].
\]
From Lemma \ref{L:projC}(ii), we know that $1/k_\lambda$ is a multiplier so in fact
\[
 \sum_{j=1}^n v_j \psi_{ij}-\|M\| w_i\in [\fa \H]
\]
for every $1\leq i\leq n$. In turn, by Theorem \ref{T:lattice}, this implies that
\[
 \sum_{j=1}^n v_j \psi_{ij}-\|M\| w_i\in \fa
\]
 whence
\[
 \sum_{j=1}^n  \psi_{ij}(Z_\fa)v_j=\|M\| w_i I_{\H_\fa}
\]
for every $1\leq i\leq n$. 
\end{proof}

We now illustrate the importance in the previous theorem that $\lambda$ be an interior zero of the ideal;  the conclusion fails if $\lambda$ is simply an interior point of the ball.

\begin{example}\label{E:maxprinciple}
Let $\H$ be the Hardy space on the unit disc. Let $\theta\in \M(\H)$ be an inner function such that if $\fa=\theta \M(\H)$ then there is $\psi\in \M(\H)$ with $\psi\notin \fa+\bC 1$. Choose  distinct points $w_1,w_2\in\bD$ such that $\psi(w_1),\psi(w_2),\theta(w_1),\theta(w_2)$ are all non-zero. Moreover, choose $c_1,c_2\in \bC$ such that
\[
 |\psi(w_1)+c_1\theta(w_1)|>\sup_{z\in \bD}|\psi(z)|=\|M_\psi\|
\]
and
\[
 \psi(w_2)+c_2\theta(w_2)=0.
\]
Next, let $\phi\in \A(\H)$ be such that $\phi(w_1)=c_1,\phi(w_2)=c_2$. We find
\[
 |(\psi+\theta\phi)(w_1)|>\|M_\psi\|\geq \|\psi(Z_\fa)\|
\]
and $(\psi+\theta\phi)(w_2)=0$. Let $r=\max\{|w_1|,|w_2|\}$, so that $0<r<1$. Since the function $|\psi+\theta\phi|$ is continuous on the closed disc of radius $r$ centred at the origin, we conclude that there is $w\in \bD$ such that
\[
 |(\psi+\theta\phi)(w)|=\|\psi(Z_\fa)\|.
\]
Nevertheless, we see that $\psi(Z_\fa)$ is not a constant multiple of the identity, since $\psi\notin \fa+\bC 1$.
 \qed
\end{example}

As a first application of Theorem \ref{T:Aaessnorm}, we compare the norm of the Gelfand transform of an element of $\A_\fa$ to its essential norm in some extremal cases.

\begin{corollary}\label{C:uniformquotient}
	Let $\H$ be a  maximal regular unitarily invariant complete Nevanlinna--Pick space.
	Let $\fa\subset \M(\H)$ be a proper weak-$*$ closed ideal with the property that $\H_\fa$ is infinite-dimensional.	Let $q:\fT_\fa\to \O_\fa$ be the quotient map.
	Let $A=[\phi_{ij}(Z_\fa)]_{ij}\in \bM_n(\A(\H))$. Then, we have that
	\[
 	\|A\|=\max\{\|\chi^{(n)}(A) \|_{\bM_n}:\chi\in \Delta(\A_\fa) \}
	\]
	if and only if $\|A\|=\|q^{(n)}(A)\|$.
\end{corollary}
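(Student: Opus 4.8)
Write $M=\|A\|$, $\gamma=\max\{\|\chi^{(n)}(A)\|_{\bM_n}:\chi\in\Delta(\A_\fa)\}$ and $Q=\|q^{(n)}(A)\|$, so the claim is that $M=\gamma$ if and only if $M=Q$. The plan is to first record the chain
\[
Q\le\gamma\le M ,
\]
in which the left inequality is exactly \eqref{Eq:QltTltA} of Theorem \ref{T:Aaessnorm} (together with $\gamma=\|g^{(n)}_{\A_\fa}(A)\|$), and the right inequality is the complete contractivity of the Gelfand transform of $\A_\fa$ (each character of $\A_\fa$ is a contraction, hence completely contractive). Granting this, the implication $(M=Q)\Rightarrow(M=\gamma)$ is immediate. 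All the work therefore goes into the reverse implication: assuming $M=\gamma$, one must produce $Q\ge M$.

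\textbf{Reduction to $\supp\fa$.} By Theorem \ref{T:specAaTa}(i) one has $\Delta(\A_\fa)=\{\tau_\lambda:\lambda\in\widehat{\supp\fa}\}$ with $\tau_\lambda^{(n)}(A)=[\phi_{ij}(\lambda)]_{ij}$, so $\gamma=\max\{\|[\phi_{ij}(\lambda)]\|_{\bM_n}:\lambda\in\widehat{\supp\fa}\}$. The first real step is to bring this maximum down from the polynomially convex hull to $\supp\fa$ itself. Since each $\phi_{ij}$ lies in $\A(\H)$, it is a uniform limit on $\ol{\bB_d}$ of polynomials by \eqref{Eq:supnorm}; hence for every pair of unit vectors $v,w\in\bC^n$ the scalar function $\lambda\mapsto\langle[\phi_{ij}(\lambda)]v,w\rangle$, being a fixed linear combination of the $\phi_{ij}$, is also a uniform limit of polynomials on $\ol{\bB_d}\supseteq\widehat{\supp\fa}$. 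By the definition of the polynomial hull its maximum over $\widehat{\supp\fa}$ equals its maximum over $\supp\fa$; taking the supremum over $v,w$ gives $\gamma=\max\{\|[\phi_{ij}(\lambda)]\|_{\bM_n}:\lambda\in\supp\fa\}$. Now fix $\lambda_0\in\supp\fa$ realizing this value, so that $\|[\phi_{ij}(\lambda_0)]\|_{\bM_n}=M$.

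\textbf{The dichotomy.} If $\lambda_0\in\supp\fa\cap\bS_d$, then by Theorem \ref{T:specAaTa}(iii) there is a character $\pi_{\lambda_0}$ of $\fT_\fa$ with $\pi_{\lambda_0}(\phi(Z_\fa))=\phi(\lambda_0)$; since $\H_\fa$ is infinite-dimensional, Lemma \ref{L:compactsplit} forces $\pi_{\lambda_0}$ to annihilate $\fK(\H_\fa)$, so it descends to a character $\widehat{\pi}_{\lambda_0}$ of $\O_\fa$ with $\widehat{\pi}_{\lambda_0}\circ q=\pi_{\lambda_0}$, whence $Q\ge\|\widehat{\pi}_{\lambda_0}^{(n)}(q^{(n)}(A))\|=\|[\phi_{ij}(\lambda_0)]\|_{\bM_n}=M$. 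If instead $\lambda_0\in\bB_d$, then $\lambda_0\in\Z_{\bB_d}(\fa)$, because $\supp\fa\cap\bB_d$ coincides with that zero set; this is precisely the hypothesis required to apply the maximum modulus principle of Theorem \ref{T:maximummodulus} to $A$ at $\lambda_0$, producing unit vectors $v,w\in\bC^n$ with $\sum_{j=1}^n v_j\phi_{ij}(Z_\fa)=M\,w_i\,I_{\H_\fa}$ for $1\le i\le n$. Applying the unital $*$-homomorphism $q$ yields $\sum_{j=1}^n v_j\,q(\phi_{ij}(Z_\fa))=M\,w_i\,q(I_{\H_\fa})$, and since $\H_\fa$ is infinite-dimensional $q(I_{\H_\fa})\neq 0$; realizing $\O_\fa$ faithfully and unitally on a Hilbert space $\E$ and testing $q^{(n)}(A)$ against the unit vector $(v_1\eta,\ldots,v_n\eta)$ for any unit $\eta\in\E$ gives $\|q^{(n)}(A)\|\ge M$. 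In either case $Q=M$, completing the proof.

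\textbf{Main obstacle.} The boundary case above is in essence the Calkin-quotient argument underlying the boundary theorem, and is routine. The genuine difficulty is the interior case: a character $\tau_{\lambda_0}$ of $\A_\fa$ attached to an interior point of the ball does \emph{not} factor through $q$ (its kernel misses $\fK(\H_\fa)$), so one cannot push a norm equality through $\O_\fa$ directly. The device that rescues the argument is Theorem \ref{T:maximummodulus}, whose conclusion is rigid enough---an honest operator identity $\sum_j v_j\phi_{ij}(Z_\fa)=M w_i I_{\H_\fa}$, not merely a numerical coincidence of norms---to survive passage to $\O_\fa$. It is exactly here that the two standing hypotheses are used: maximality is what makes Theorem \ref{T:Aaessnorm} and the description of $\Delta(\fT_\fa)$ in Theorem \ref{T:specAaTa} available, while infinite-dimensionality of $\H_\fa$ is what guarantees $\fK(\H_\fa)\subsetneq\fT_\fa$, i.e. $q(I_{\H_\fa})\neq 0$, and permits the use of Lemma \ref{L:compactsplit}. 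A minor but essential technical point is the hull-to-support reduction: without it the norm-attaining point could lie in $\widehat{\supp\fa}\cap\bB_d$ without being a zero of $\fa$, and Theorem \ref{T:maximummodulus} would not apply.
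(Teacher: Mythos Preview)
Your proof is correct and follows essentially the same route as the paper: the chain $Q\le\gamma\le M$ from Theorem \ref{T:Aaessnorm}, the hull-to-support reduction via polynomial approximation, and then the boundary/interior dichotomy handled respectively through a character of $\fT_\fa$ and through Theorem \ref{T:maximummodulus}. The only cosmetic differences are that in the boundary case the paper appeals to the identity $\|q^{(n)}(A)\|=\|g_{\fT_\fa}^{(n)}(A)\|$ from Theorem \ref{T:Aaessnorm} rather than descending $\pi_{\lambda_0}$ explicitly via Lemma \ref{L:compactsplit}, and in the interior case the paper evaluates at a character of $q(\A_\fa)$ rather than at a unit vector in a faithful realization of $\O_\fa$.
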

\begin{proof}
If $\|A\|=\|q^{(n)}(A)\|$, then Theorem \ref{T:Aaessnorm} implies that 
\[
 \|A\|=\max\{\|\chi^{(n)}(A) \|_{\bM_n}:\chi\in \Delta(\A_\fa) \}.
\]
Conversely, assume that the previous equality holds. By Theorem \ref{T:specAaTa}(i), we see that
\[
 \|A\|=\max\{\|[\phi_{ij}(\lambda)]\|_{\bM_n}:\lambda\in \widehat{\supp \fa} \}.
\]
Let $\lambda\in  \widehat{\supp \fa}$ and choose unit vectors $v,w\in \bC^n$ such that
\[
 \|[\phi_{ij}(\lambda)]\|_{\bM_n}=|\langle [\phi_{ij}(\lambda)]v,w \rangle|.
\]
Since the polynomials are dense in $\A(\H)$, by definition of the polynomial convex hull we conclude that
\[
 \|[\phi_{ij}(\lambda)]\|_{\bM_n}\leq \max_{\mu\in \supp \fa}|\langle [\phi_{ij}(\mu)]v,w \rangle|\leq \max_{\mu\in \supp\fa}\|[\phi_{ij}(\mu)]\|_{\bM_n}.
\]
This shows that
\[
 \|A\|=\max\{\|[\phi_{ij}(\lambda)]\|_{\bM_n}:\lambda\in \supp \fa \}.
\]
Choose $\lambda\in \supp \fa$ such that 
\[ \|A\|=\|[\phi_{ij}(\lambda)]_{ij}\|_{\bM_n}. \]
Assume first that $\lambda\in \supp\fa\cap \bS_d$.
By Theorem \ref{T:specAaTa}(iii), there exists  $\pi\in\Delta(\fT_\fa)$ such that $\pi(Z_\fa)=\lambda$ and
\[
	\|A\|=\|[\phi_{ij}(\lambda)]_{ij}\|_{\bM_n} = \|\pi^{(n)}(A)\|_{\bM_n} \leq \|g_{\fT_\fa}^{(n)}(A)\| \leq \|A\|.
\]
That is, $\|A\|=\|g_{\fT_\fa}^{(n)}(A)\|$.
Invoking Theorem \ref{T:Aaessnorm}, we conclude that $\|A\|=\|q^{(n)}(A)\|$ as desired.
Thus, it remains only to deal with the case where $\lambda\in  \supp \fa\cap \bB_d=\Z_{\bB_d}(\fa)$. In this case, it follows from Theorem \ref{T:maximummodulus} that there are unit vectors $v,w\in \bC^n$ with the property that
\[
\sum_{j=1}^n v_j \phi_{ij}(Z_\fa)= \|A\| w_i I_{\H_\fa}, \quad 1\leq i\leq n.
\]
Therefore, for each $\chi\in \Delta(q(\A_\fa))$ we find
\[
 \sum_{j=1}^n v_j (\chi\circ q)(\phi_{ij}(Z_\fa))= \|A\| w_i , \quad 1\leq i\leq n
\]
or
\[
 \langle (\chi\circ q)^{(n)}(A)v,w\rangle=\|A\|.
\]
Since $v$ and $w$ are unit vectors, this forces $\| (\chi\circ q)^{(n)}(A)\|_{\bM_n}=\|A\|$ whenever $\chi\in \Delta(q(\A_\fa))$.
Finally, because $\H_\fa$ is infinite-dimensional, the algebra $q(\A_\fa)$ is non-zero and it has at least one character, so that we necessarily have $\|A\|=\|q^{(n)}(A)\|.$
\end{proof}

It seems relevant given the previous result to mention that we generally do not have that
\[
 \|q(b)\|=\max\{|\chi(b)|:\chi\in \Delta(\A_\fa) \}
\]
for every $b\in \A_\fa$. Indeed, counter-examples can easily be constructed even on the classical Hardy space on the disc using ideals with support equal to $\{0,1\}$.

Our second application of Theorem \ref{T:Aaessnorm}  is one of our main results. It is a generalization of part of \cite[Proposition 4.10 and Theorem 4.12]{kennedyshalit2015} and uses essentially the same scheme of proof.

\begin{corollary}\label{C:idbdryessnorm}
Let $\H$ be a maximal regular unitarily invariant complete Nevanlinna--Pick space. Let $\fa\subset \M(\H)$ be a proper weak-$*$ closed ideal. The following statements are equivalent.
\begin{enumerate}[{\rm (i)}]
 \item The algebra $\A_\fa$ is hyperrigid in $\fT_\fa$.
 
 \item The $d$-tuple $Z_\fa$ is essentially normal and the identity representation of $\fT_\fa$ is a boundary representation for $\A_{\fa}$.
\end{enumerate}
\end{corollary}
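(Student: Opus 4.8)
The plan is to prove the two implications separately, in both cases leveraging Theorem~\ref{T:sphunitUEP} (via Lemma~\ref{L:sphunitUEPk}), Arveson's boundary theorem (Theorem~\ref{T:bdrythm}), and the essential-norm computation of Theorem~\ref{T:Aaessnorm}. For the implication {\rm (ii)} $\Rightarrow$ {\rm (i)}, assume $Z_\fa$ is essentially normal and that the identity representation of $\fT_\fa$ is a boundary representation for $\A_\fa$. Let $\pi:\fT_\fa\to B(\H_\pi)$ be an arbitrary unital $*$-homomorphism; I want to show $\pi$ has the unique extension property with respect to $\A_\fa$. Apply Lemma~\ref{L:compactsplit} to split $\pi$ as an ampliation of the identity representation (which has the UEP by hypothesis) direct summed with $\sigma\circ q$, where $q:\fT_\fa\to\O_\fa$ is the quotient by the compacts. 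An ampliation of a representation with the UEP still has the UEP, so it suffices to treat the summand $\sigma\circ q$. Here essential normality of $Z_\fa$ enters: since $I-\sum_j\fz_j\fz_j^*$ is compact (Lemma~\ref{L:irred}(i)) and the $\fz_j$ commute modulo compacts with their adjoints, the images $\sigma(q(\fz_j))$ form a spherical unitary on $\H_\sigma$, with $\sum_j \sigma(q(\fz_j))\sigma(q(\fz_j))^*=I$. Then Lemma~\ref{L:sphunitUEPk} applied to the representation $\rho=(\sigma\circ q)|_{\A_\fa}$ produces a $*$-homomorphism with the UEP extending $\rho$; by uniqueness of the $*$-homomorphic extension of a completely contractive representation this must be $\sigma\circ q$ itself, so $\sigma\circ q$ has the UEP. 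Assembling the summands shows $\pi$ has the UEP, i.e.\ $\A_\fa$ is hyperrigid.

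For {\rm (i)} $\Rightarrow$ {\rm (ii)}, assume $\A_\fa$ is hyperrigid in $\fT_\fa$. Applying the UEP to the identity representation of $\fT_\fa$ immediately gives that the identity representation is a boundary representation for $\A_\fa$ (in the irreducible case, which holds by Lemma~\ref{L:irred}(iii)). It remains to prove essential normality of $Z_\fa$. The idea is to use hyperrigidity applied to a representation constructed from a spherical unitary dilation of $q(Z_\fa)^*$, as in the proof of Theorem~\ref{T:Aaessnorm}: let $\O_\fa\subset B(\E)$, note $\sum_j q(\fz_j)q(\fz_j)^*=I$ by Lemma~\ref{L:irred}(i), and dilate $q(Z_\fa)^*$ to a (minimal) spherical unitary $U^*$ on some $\E'\supseteq\E$ with $U^*_j|_\E=q(\fz_j)^*$. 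Let $\pi_0:\fT_\fa\to B(\E')$ be the unital $*$-homomorphism obtained from Lemma~\ref{L:sphunitUEPk} applied to the spherical-unitary representation $\fz_j\mapsto U_j$; by hyperrigidity $\pi_0$ has the UEP with respect to $\A_\fa$. Now compress: $P_\E\,\pi_0(\cdot)|_\E$ is a unital completely contractive extension of $(q|_{\A_\fa})$-composed-with-dilation to $\fT_\fa$, and one checks it agrees with $q$ on $\A_\fa$. The UEP of the composite (or a careful direct argument) then forces the compression of $\pi_0$ to be multiplicative on all of $\fT_\fa$, which yields that $\E$ is reducing for $\pi_0(\fT_\fa)$; since $\pi_0(M_{z_j})=U_j$ is normal and $U_j$ reduces to $q(\fz_j)$ on $\E$, we conclude $q(\fz_j)$ is normal, i.e.\ $Z_\fa$ is essentially normal.

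The main obstacle I expect is the last step of {\rm (i)} $\Rightarrow$ {\rm (ii)}: extracting essential normality of $Z_\fa$ from hyperrigidity. The subtlety is that hyperrigidity gives the UEP for every representation of $\fT_\fa$, but one must engineer a \emph{specific} representation whose failure to be multiplicative after compression would contradict the UEP unless $q(Z_\fa)$ is already normal. The clean way to do this is to follow \cite[Proposition~4.10]{kennedyshalit2015}: use the characterization that hyperrigidity is equivalent to the statement that for every unital $*$-homomorphism $\pi$ of $\fT_\fa$ and every net in the unit ball of $\A_\fa$, if $\pi(a_i)\to\pi(a)$ weakly then $\pi(a_i^*a_i)\to\pi(a^*a)$ weakly (Arveson's reformulation). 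Applied to the dilation representation $\pi_0$ above, with $a_i$ a sequence of polynomials approximating the identity in the relevant sense, this \emph{forces} $P_\E\pi_0(\fz_j^*\fz_j)|_\E = (P_\E\pi_0(\fz_j)|_\E)^*(P_\E\pi_0(\fz_j)|_\E)$, i.e.\ $q(\fz_j^*\fz_j)=q(\fz_j)^*q(\fz_j)$; combined with $\sum_j q(\fz_j)q(\fz_j)^*=I$ and $\sum_j q(\fz_j^*\fz_j)=$ (the image of) $\sum_j M_{z_j}^*M_{z_j}$, which differs from $\sum_j M_{z_j}M_{z_j}^*$ by a compact, one gets $\sum_j q(\fz_j)^*q(\fz_j)=\sum_j q(\fz_j)q(\fz_j)^*=I$, and a standard trace/positivity argument then gives $q(\fz_j)^*q(\fz_j)=q(\fz_j)q(\fz_j)^*$ for each $j$, which is precisely essential normality of $Z_\fa$. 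Writing this step out carefully, keeping track of which topology the nets converge in and justifying the manipulation of the row/column operators, is where the real work lies; everything else is bookkeeping with Lemmas~\ref{L:compactsplit}, \ref{L:irred}, and \ref{L:sphunitUEPk}.
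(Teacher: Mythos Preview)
Your direction (ii) $\Rightarrow$ (i) matches the paper's proof essentially verbatim: split an arbitrary representation via Lemma~\ref{L:compactsplit}, handle the ampliation summand using the assumed boundary property, and handle the $\sigma\circ q$ summand via Lemma~\ref{L:sphunitUEPk}, using essential normality to verify the spherical-unitary hypothesis.

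For (i) $\Rightarrow$ (ii) your route diverges from the paper's, and there are issues. The paper does \emph{not} build a dilation of $q(Z_\fa)$ and analyze compressions. Instead it argues abstractly: hyperrigidity of $\A_\fa$ in $\fT_\fa$ passes to the quotient, so $q(\A_\fa)$ is hyperrigid in $\O_\fa$ (by \cite[Corollary~2.2]{arveson2011}); hence the identity representation of $\O_\fa$ has the unique extension property with respect to $q(\A_\fa)$ and $\rC^*_e(q(\A_\fa))\cong\O_\fa$. Now Theorem~\ref{T:Aaessnorm} (which you cited in your plan but never used for this direction) says the Gelfand transform of $q(\A_\fa)$ is completely isometric, so Theorem~\ref{T:GTC*env} forces $\rC^*_e(q(\A_\fa))$, and therefore $\O_\fa$, to be commutative. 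That is essential normality. This is short and avoids any dilation machinery.

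Your dilation approach can in fact be salvaged, but your writeup invokes the wrong unique extension property. You write ``the UEP of the composite'' and of $\pi_0$; what you actually need is the UEP of $q$ itself (which hyperrigidity also gives). The compression $X\mapsto P_\E\pi_0(X)|_\E$ is a unital completely positive map on $\fT_\fa$ agreeing with $q$ on $\A_\fa$, so by the UEP of $q$ it equals $q$ on all of $\fT_\fa$. Applying this to $\fz_j^*\fz_j$ and $\fz_j\fz_j^*$ and using that $U_j$ is normal immediately gives $q(\fz_j)^*q(\fz_j)=q(\fz_j)q(\fz_j)^*$. Once phrased this way, no detour through ``$\E$ is reducing'' is needed.

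Your proposed ``clean way'' in the obstacle paragraph, however, contains a genuine gap. The final step claims that $\sum_j q(\fz_j)^*q(\fz_j)=\sum_j q(\fz_j)q(\fz_j)^*$ together with a ``standard trace/positivity argument'' yields normality of each $q(\fz_j)$. This is false in a general $\rC^*$-algebra: equality of the sums $\sum a_j^*a_j=\sum a_ja_j^*$ does not force each $a_j$ to be normal (take for instance $a_1$ any non-normal operator and $a_2=a_1^*$). There is no faithful trace available on $\O_\fa$ to rescue this. The ``Arveson reformulation'' of hyperrigidity you invoke is also not a standard statement, and the intermediate claim that $\sum_j M_{z_j}^*M_{z_j}$ differs from $\sum_j M_{z_j}M_{z_j}^*$ by a compact is not established in the paper for general maximal regular unitarily invariant complete Nevanlinna--Pick spaces. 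Drop this line of argument entirely; either fix the dilation approach as above, or adopt the paper's route through Theorems~\ref{T:Aaessnorm} and~\ref{T:GTC*env}.
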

\begin{proof}
The statement is trivial when $\H_\fa$ is one-dimensional, so we assume without loss of generality that $\H_\fa$ has dimension greater than one. Let $q:\fT_\fa\to\O_\fa$ be the quotient map.

(i) $\Rightarrow$ (ii): Assume that $\A_\fa$ is hyperrigid in $\fT_\fa$. In particular,  the identity representation of $\fT_\fa$ is a boundary representation for $\A_{\fa}$; we recall that it is irreducible by Lemma \ref{L:irred}(iii). Moreover, $q(\A_\fa)$ is hyperrigid in $\O_\fa$ by \cite[Corollary 2.2]{arveson2011}. This means that the identity representation of $\O_\fa$ has the unique extension property with respect to $q(\A_\fa)$, whence $\rC^*_e(q(\A_\fa))\cong \O_\fa$ as explained in Section \ref{S:OA}. On the other hand, it follows from Theorem \ref{T:Aaessnorm} that the Gelfand transform of $q(\A_\fa)$ is completely isometric. We conclude that $\rC^*_e(q(\A_\fa))$ is  commutative by Theorem \ref{T:GTC*env}, and thus so is $\O_\fa$. This says precisely that $Z_\fa$ is essentially normal. 

 (ii) $\Rightarrow$ (i): Assume that the $d$-tuple $Z_\fa$ is essentially normal and the identity representation of $\fT_\fa$ is a boundary representation for $\A_{\fa}$. Now, the unique extension property is preserved upon taking direct sums \cite[Proposition 4.4]{arveson2011}. Thus, by virtue of Lemma \ref{L:compactsplit}, to establish the hyperrigidity of $\A_\fa$ it suffices to check that if $\sigma$ is a unital $*$-representation of $\O_\fa$, then $\sigma\circ q$  has the unique extension property with respect to $\A_\fa$. To see this, we notice that $I=\sum_{j=1}^d q(\fz_j) q(\fz_j)^*$ by Lemma \ref{L:irred}(i). Therefore, we must also have that 
\[
I=\sum_{j=1}^d (\sigma \circ q)(\fz_j) (\sigma \circ q)(\fz_j)^*.
\]
Moreover, because $Z_\fa$ is essentially normal by assumption,  $(\sigma\circ q)(\fz_j)$ must be normal for each $1\leq j\leq d$. The fact that $\sigma \circ q$ has the unique extension property with respect to $\A_\fa$ now follows immediately from Lemma \ref{L:sphunitUEPk}.
\end{proof}

In light of the previous theorem, we see that it is of great interest to be able to  determine when the  identity representation of $\fT_\fa$ is a boundary representation for $\A_{\fa}$. The following is one of our main results, and it addresses this question.

\begin{theorem}\label{T:unifquotientAa}
Let $\H$ be a maximal regular unitarily invariant complete Nevanlinna--Pick space. Let $\fa\subset \M(\H)$ be a proper weak-$*$ closed ideal. If $\H_\fa$ has dimension greater than one, then the following statements are equivalent.
\begin{enumerate}[{\rm(i)}]

\item The Gelfand transform of $\A_\fa$ is completely isometric.

 \item The $\rC^*$-envelope of $\A_\fa$ is commutative.

\item The identity representation of $\fT_\fa$ is not a boundary representation for $\A_\fa$.
\end{enumerate}
If $\H_\fa$ is one-dimensional, then $\A_\fa=\fT_\fa\cong \bC$ and every irreducible $*$-representation of $\fT_\fa$ is a boundary representation for $\A_\fa$.
\end{theorem}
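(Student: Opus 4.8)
The plan is to read off the three equivalences (when $\dim\H_\fa>1$) from machinery already in place, with only a short chain of norm estimates added. Throughout I would use that, by Lemma~\ref{L:irred}(iii), $\fT_\fa=\rC^*(\A_\fa)$ is irreducible and contains the compact operators $\fK(\H_\fa)$, and that $\dim\H_\fa>1$ then forces $\fT_\fa$ to be non-commutative. The equivalence {\rm(i)}~$\Leftrightarrow$~{\rm(ii)} is immediate: it is exactly Theorem~\ref{T:GTC*env} applied to the norm-closed unital commutative operator algebra $\A=\A_\fa$ (which is commutative since $\A_\fa\subset\M_\fa$ and $\M_\fa=\Gamma(\M(\H))$). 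For {\rm(i)}~$\Rightarrow$~{\rm(iii)} I would invoke Corollary~\ref{C:GTC*env} with $\H=\H_\fa$ of dimension greater than one and $\fK(\H_\fa)\subset\rC^*(\A_\fa)=\fT_\fa$: the implication {\rm(i)}~$\Rightarrow$~{\rm(ii)} there says precisely that a completely isometric Gelfand transform for $\A_\fa$ prevents the identity representation of $\fT_\fa$ from being a boundary representation for $\A_\fa$.

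It remains to prove {\rm(iii)}~$\Rightarrow$~{\rm(i)}. Because $\fT_\fa$ contains $\fK(\H_\fa)$, Arveson's boundary theorem (Theorem~\ref{T:bdrythm}) translates the hypothesis that the identity representation of $\fT_\fa$ is \emph{not} a boundary representation for $\A_\fa$ into the statement that the quotient map $q\colon\fT_\fa\to\O_\fa$ is completely isometric on $\A_\fa$. I would then feed this into Theorem~\ref{T:Aaessnorm}, which applies since $\dim\H_\fa>1$: for every $n\geq1$ and every $A\in\bM_n(\A_\fa)$ we get
\[
\|A\|=\|q^{(n)}(A)\|=\|g^{(n)}_{\fT_\fa}(A)\|\leq\|g^{(n)}_{\A_\fa}(A)\|\leq\|A\|,
\]
where the middle equality is the sharp assertion of Theorem~\ref{T:Aaessnorm}, the third term--to--fourth inequality is part of \eqref{Eq:QltTltA}, and the last is the (complete) contractivity of the Gelfand transform. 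Equality throughout forces $\|g^{(n)}_{\A_\fa}(A)\|=\|A\|$ for all $n$ and all $A$, i.e. the Gelfand transform of $\A_\fa$ is completely isometric, which is {\rm(i)}.

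For the degenerate case $\dim\H_\fa=1$ I would simply note that $B(\H_\fa)\cong\bC$, so, as $\A_\fa$ is unital (it contains $\Gamma(1)=I_{\H_\fa}$), we have $\A_\fa=\bC I_{\H_\fa}=\rC^*(\A_\fa)=\fT_\fa\cong\bC$. Since $\A_\fa=\rC^*(\A_\fa)$, any unital $*$-representation $\pi$ of $\fT_\fa$ satisfies $\pi|_{\A_\fa}=\pi$, so it trivially has the unique extension property with respect to $\A_\fa$; hence every irreducible $*$-representation of $\fT_\fa$ is a boundary representation for $\A_\fa$.

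I do not expect a genuine obstacle here: the substance is already carried by Theorem~\ref{T:Aaessnorm} and Corollary~\ref{C:GTC*env}, and this statement is essentially their repackaging. The two points that need care are (a) confirming that $\fT_\fa$ is truly non-commutative when $\dim\H_\fa>1$ (clear from $\fK(\H_\fa)\subset\fT_\fa$), which is what makes conditions {\rm(ii)} and {\rm(iii)} consistent with $\rC^*_e(\A_\fa)\cong\fT_\fa$ failing; and (b) using the sharp equality $\|q^{(n)}(A)\|=\|g^{(n)}_{\fT_\fa}(A)\|$ rather than a mere inequality, as this is exactly the bridge linking the essential norm to the Gelfand transform of $\A_\fa$ itself.
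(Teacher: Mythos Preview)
Your proof is correct and follows essentially the same approach as the paper: (i)~$\Leftrightarrow$~(ii) from Theorem~\ref{T:GTC*env}, (i)~$\Rightarrow$~(iii) from Corollary~\ref{C:GTC*env}, and (iii)~$\Rightarrow$~(i) via Theorem~\ref{T:bdrythm} and Theorem~\ref{T:Aaessnorm}. Your chain of (in)equalities for (iii)~$\Rightarrow$~(i) makes explicit what the paper states tersely as ``Theorem~\ref{T:Aaessnorm} then implies that the Gelfand transform of $\A_\fa$ is completely isometric.''
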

\begin{proof}
When $\H_\fa$ is one-dimensional, the statement is trivial. We suppose henceforth that $\H_\fa$ has dimension greater than one. 

(i) $\Leftrightarrow$ (ii): This is immediate from Theorem \ref{T:GTC*env}.

(i) $\Rightarrow$ (iii): Since $\H_\fa$ has dimension greater than one and $\fT_\fa$ contains the compact operators by Lemma \ref{L:irred}(iii), this follows immediately from Corollary \ref{C:GTC*env}.

(iii) $\Rightarrow$ (i): It follows from Theorem \ref{T:bdrythm} that the quotient map $q:\fT_\fa\to \O_\fa$ is completely isometric on $\A_\fa$. Theorem \ref{T:Aaessnorm} then implies that the Gelfand transform of $\A_\fa$ is completely isometric.
\end{proof}

We now look at some concrete examples. First, we address the classical situation of the Hardy space on the unit disc, which was completely elucidated by Arveson in \cite{arveson1969,arveson1972}.

\begin{example}\label{E:HardyAaGelf}
 Let $\H=H^2_1$ be the Hardy space on the unit disc. In this special case, the algebras $\A(\H)$ and $\M(\H)$ can be identified completely isometrically with classical algebras of functions. We briefly hint at the well-known details.
 
 Recall that $H^\infty(\bD)$ is the algebra of all bounded holomorphic functions on $\bD$. It is a Banach algebra when equipped with the norm
 \[
  \|\psi\|=\sup_{\lambda\in \bD}|\psi(\lambda)|, \quad \psi\in H^\infty(\bD).
 \]
The disc algebra $A(\bD)$ is the closed subalgebra of $H^\infty(\bD)$ consisting of those continuous functions on $\ol{\bD}$ that are holomorphic on $\bD$.  By Inequality (\ref{Eq:supnorm}), we see that there is a unital, injective, contractive homomorphism $\Theta:\M(\H)\to H^\infty(\bD)$ such that
\[
 \Theta(M_\psi)=\psi, \quad \psi\in \M(\H).
\]
In fact, it is known that this embedding is isometric and surjective. As we show next, more is true.

  Let $\bT\subset \bC$ denote the unit circle, let $m$ denote Lebesgue measure on $\bT$ and let $\E=L^2(\bT,m)$. Identifying a function in $H^\infty(\bD)$ with its $[m]$-almost everywhere defined boundary values on $\bT$, we see that $A(\bD)\subset H^\infty(\bD)\subset L^\infty(\bT,m)$, so that in particular $A(\bD)$ and $H^\infty(\bD)$  are operator algebras. 
Up to unitary equivalence we have $\H\subset \E$ and
 \[
  M_\psi=\psi(U)|_\H, \quad \psi\in \M(\H)
 \]
where $U\in B(\E)$ denotes the unitary operator of multiplication by the coordinate $\zeta$. In particular, given $[\psi_{ij}]_{ij}\in \bM_n(\M(\H))$, we see that
\begin{align*}
 \|[M_{\psi_{ij}}]_{ij}\|_{\bM_n(\M(\H))}
 &\leq \|[\psi_{ij}]_{ij}\|_{\bM_n(L^\infty(\bT,m))}\\
 &=\sup_{\lambda\in \bD} \|[\psi_{ij}(\lambda)]_{ij}\|_{\bM_n}.
\end{align*}
The converse inequality always holds, by Inequality (\ref{Eq:supnorm}). Thus, the map $\Theta$ is a unital completely isometric isomorphism and $\Theta(\A(\H))=A(\bD)$. 
Now, $A(\bD)\subset C(\ol{\bD})$ and $H^\infty(\bD)\subset L^\infty(\bT,m)$,  and $C(\ol{\bD})$ and $L^\infty(\bT,m)$ are unital commutative $\rC^*$-algebras. It is then well known  that the Gelfand transforms of $\A(\H)$ and $\M(\H)$ must be isometric, and hence completely isometric by \cite[Theorem 3.9]{paulsen2002}. This corresponds to the situation where $\fa=\{0\}$.

We now consider non-trivial quotients. It follows from Beurling's theorem and Theorem \ref{T:lattice} that every non-zero weak-$*$ closed ideal of $\M(\H)$ is a principal ideal generated by an inner function.  Let $\theta\in \M(\H)$ be such an inner function and let $\fa=\theta\M(\H)$. Assume that $\H_\fa$ has dimension greater than one.  Let $K\subset \ol{\bD}$ be the closure of the zero set of $\theta$, along with the points on the unit circle $\bT$ contained in an arc across which $\theta$ cannot be continued holomorphically. Then, $K=\supp \fa$ in this case \cite[Theorem 2.4.11]{bercovici1988}. It follows from \cite[Corollary 1 page 291]{arveson1972} that the identity representation of $\fT_\fa$ is a boundary representation for $\A_\fa$ if and only if $K\cap \bT$ is a proper subset of the circle. By Theorem \ref{T:unifquotientAa}, we conclude that the Gelfand transform of $\A_\fa$ is completely isometric if and only if $K\cap \bT=\bT$.
\qed
\end{example}

For the Drury--Arveson space in several variables, the situation is more complicated. 

\begin{example}\label{E:DAAaGelf}
Let $\H=H^2_2$ be the Drury--Arveson space on $\bB_2$. Let $\fa=\{0\}$. Then, $\H_\fa=\H$ and $\A_\fa=\A(\H)$. It is known (see \cite[Theorem 3.3]{arveson1998}) that the quotient map $q:\fT_\fa\to \O_\fa$ is not completely isometric on $\A_\fa$ in this case, whence the identity representation of $\fT_\fa$ is a boundary representation for $\A_\fa$ by Theorem \ref{T:bdrythm}. By Theorem \ref{T:unifquotientAa}, we conclude that the Gelfand transform of $\A_\fa$ is not completely isometric.

Next, let $\fb\subset \M(\H)$ be the weak-$*$ closed ideal generated by $z_2$.  In this case, $\A_\fb$ is unitarily equivalent to $\A(H^2_1)$, which has a completely isometric Gelfand transform as seen in Example \ref{E:HardyAaGelf}.
\qed
\end{example}

In Section \ref{S:polyapprox}, we will explore further concrete conditions on $\fa$ that guarantee that the Gelfand transform of $\A_\fa$ is \emph{not} completely isometric. In light of Corollary \ref{C:idbdryessnorm} and Theorem \ref{T:unifquotientAa}, this will complement \cite[Theorem 4.12]{kennedyshalit2015} (see \cite{kennedyshalit2015corr} for updated details).

\subsection{The full algebra $\M_\fa$}\label{SS:Ma}

Up to now, we devoted our attention to the norm-closed unital algebra $\A_\fa$ generated by $\fz_1,\ldots,\fz_d$ inside of $\M_\fa$. We now turn our focus to the ``full'' algebra $\M_\fa$, and examine when its Gelfand transform is completely isometric. The first step we take in our analysis of $\M_\fa$ is the following, which provides an analogous tool to Theorem \ref{T:unifquotientAa}. Note that $\rC^*(\M_\fa)$ trivially contains $\fT_\fa=\rC^*(\A_\fa)$, and in particular it contains the ideal of compact operators on $\H_\fa$ by Lemma \ref{L:irred}(iii).

\begin{corollary}\label{C:Maunifalg}
 Let $\H$ be a  regular unitarily invariant complete Nevanlinna--Pick space. Let $\fa\subset \M(\H)$ be a proper weak-$*$ closed ideal such that $\H_\fa$ has dimension greater than one. Consider the following statements.
   \begin{enumerate}[{\rm (i)}]
   \item The Gelfand transform of $\M_\fa$ is completely isometric.
   
   \item The $\rC^*$-envelope of $\M_\fa$ is commutative.
   
   \item There is a closed two-sided ideal $\fJ\subset \rC^*(\M_\fa)$ such that $\rC^*(\M_\fa)/\fJ$ is commutative and the quotient map $q_\fJ:\rC^*(\M_\fa)\to \rC^*(\M_\fa)/\fJ$ is completely isometric on $\M_\fa$.
   
   \item  The identity representation of $\rC^*(\M_\fa)$ is not a boundary representation for $\M_\fa$. 
 \end{enumerate}
Then we have
\[
 {\rm{(i)}} \Leftrightarrow {\rm{(ii)}} \Leftrightarrow {\rm{(iii)}} \Rightarrow {\rm{(iv)}}.
\]
If we assume in addition that $\rC^*(\M_\fa)/\fK(\H_\fa)$ is commutative, then we have
\[
 {\rm{(i)}} \Leftrightarrow {\rm{(ii)}} \Leftrightarrow {\rm{(iii)}} \Leftrightarrow {\rm{(iv)}}.
\]
\end{corollary}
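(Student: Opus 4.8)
The plan is to deduce the whole statement from Theorem \ref{T:GTC*env} and Corollary \ref{C:GTC*env}, once it is checked that $\M_\fa$ satisfies their hypotheses. First I would record that $\M_\fa$ is a norm-closed unital commutative operator algebra on $\H_\fa$: unitality and commutativity are immediate from the definition $\M_\fa=\Gamma(\M(\H))$, and norm-closedness follows because $\M_\fa$ is in fact weak-$*$ closed in $B(\H_\fa)$, as noted just after Theorem \ref{T:lattice}. Next, since $\A_\fa=\ol{\Gamma(\A(\H))}^{\textrm{norm}}\subseteq \M_\fa$, we get $\fT_\fa=\rC^*(\A_\fa)\subseteq \rC^*(\M_\fa)$, and Lemma \ref{L:irred}(iii) guarantees that $\fT_\fa$ contains the ideal $\fK(\H_\fa)$ of compact operators; hence $\rC^*(\M_\fa)\supseteq \fK(\H_\fa)$.

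With these preliminaries, Theorem \ref{T:GTC*env} applied to the algebra $\A=\M_\fa$ yields the equivalences ${\rm (i)}\Leftrightarrow{\rm (ii)}\Leftrightarrow{\rm (iii)}$ directly. For the implication to ${\rm (iv)}$, I would invoke the first assertion of Corollary \ref{C:GTC*env} with $\H=\H_\fa$, which is legitimate precisely because $\dim\H_\fa>1$ and $\rC^*(\M_\fa)\supseteq\fK(\H_\fa)$: it gives that if the Gelfand transform of $\M_\fa$ is completely isometric, then the identity representation of $\rC^*(\M_\fa)$ is not a boundary representation for $\M_\fa$, i.e. ${\rm (i)}\Rightarrow{\rm (iv)}$. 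Chaining this with the equivalences already obtained produces ${\rm (iii)}\Rightarrow{\rm (iv)}$, completing the first half. Finally, under the additional assumption that $\rC^*(\M_\fa)/\fK(\H_\fa)$ is commutative, the second assertion of Corollary \ref{C:GTC*env} supplies the reverse implication ${\rm (iv)}\Rightarrow{\rm (i)}$, so in that case all four statements are equivalent.

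I do not anticipate a genuine obstacle: the mathematical content is entirely carried by Theorem \ref{T:GTC*env} and Corollary \ref{C:GTC*env}. The only points demanding a moment's attention are the two verifications above — that $\M_\fa$ is norm-closed and that $\rC^*(\M_\fa)$ contains the compacts — without which those two results would not literally apply.
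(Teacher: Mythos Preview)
Your proposal is correct and follows exactly the paper's approach: the paper's proof consists of the single sentence ``This is a special case of Theorem \ref{T:GTC*env} and Corollary \ref{C:GTC*env},'' and you have simply spelled out the hypothesis checks (norm-closedness of $\M_\fa$ and $\fK(\H_\fa)\subset\rC^*(\M_\fa)$) that make this invocation legitimate.
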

\begin{proof}
This is a special case of Theorem \ref{T:GTC*env} and Corollary \ref{C:GTC*env}. 
\end{proof}

We remark that one reason that the previous statement is not as complete as that of Theorem \ref{T:unifquotientAa} is that the analogue of Theorem \ref{T:Aaessnorm} fails for $\M_\fa$ (see Example \ref{E:cpactquotientFX}). We also note that while it may be tempting, in light of Theorem \ref{T:Toeplitz}, to guess that $\rC^*(\M_\fa)/\fK(\H_\fa)$ is always commutative, this  is unfortunately not the case (see \cite{FX2011}). Corollary \ref{C:Maunifalg} thus raises interesting questions regarding quotients of $\rC^*(\M_\fa)$, which we explore in the following examples.

\begin{example}\label{E:Mdcompactquotient}
Let $d\geq 2$ and let $\H=H^2_d$ be the Drury-Arveson space on $\bB_d$. Let $\fa=\{0\}$ so that $\H=\H_\fa$ and $\M_\fa=\M(\H)$. Then, $\rC^*(\M(\H))/\fK(\H)$ is not commutative by \cite[Theorem 1.2]{FX2011}. Let $q:\rC^*(\M(\H))\to \rC^*(\M(\H))/\fK(\H)$ be the quotient map.  We saw in Example \ref{E:cpactquotientFX} that the Gelfand transforms of $q(\M(\H))$ and $\M(\H)$ are not isometric. Moreover, $q$ is not completely isometric on $\M(\H)$ as explained in Example \ref{E:DAAaGelf}. Hence the identity representation of $\rC^*(\M(\H))$ is a boundary representation for $\M(\H)$ by virtue of Theorem \ref{T:bdrythm}. 
 \qed
\end{example}

Next, we consider non-trivial ideals on the Hardy space.

\begin{example}\label{E:HardyMaGelf}
 Let $\H=H^2_1$ be the Hardy space on the unit disc.  As explained in Example \ref{E:HardyAaGelf}, every weak-$*$ closed ideal of $\M(\H)$ is a principal ideal generated by some inner function.  Let $\theta\in \M(\H)$ be such an inner function and let $\fa=\theta\M(\H)$. It was shown in \cite[Theorem 1]{moore1974} that $\M_\fa$ contains a non-zero compact operator, so that the identity representation of $\rC^*(\M_\fa)$ is a boundary representation for $\M_\fa$ by Theorem \ref{T:bdrythm}. Moreover, the Gelfand transform of $\M_\fa$ is never isometric when $\H_\fa$ has dimension greater than one, by \cite[Theorem 4.2]{clouatre2015CB}.
\qed
\end{example}

We make an elementary remark.
Let $n\geq 1$ and let $A=[\phi_{ij}(Z_\fa)]_{ij}\in \bM_n(\A_\fa)$. We see that
\begin{align*}
 \max\{\|\chi^{(n)}(A)\|_{\bM_n}:\chi\in \Delta(\A_\fa)\}&=\max\{\|[\phi_{ij}(\lambda)]_{ij}\|_{\bM_n}:\lambda\in \widehat{\supp \fa}\}\\
 &\geq \max\{\|[\phi_{ij}(\lambda)]_{ij}\|_{\bM_n}:\lambda\in \supp \fa\}\\
 &= \max\{\|\chi^{(n)}(A)\|_{\bM_n}:\chi\in \Delta(\M_\fa)\}
\end{align*}
by definition of $\supp \fa$ and by Theorem \ref{T:specAaTa}(i). Therefore, if the Gelfand transform of $\M_\fa$ is completely isometric, then so is that of $\A_\fa$. The converse does not generally hold; see Examples \ref{E:HardyAaGelf} and \ref{E:HardyMaGelf}.

The phenomena described in the previous example are not representative of the general multivariate situation, as we illustrate next.

\begin{example}\label{E:DAMaGelf}
Let $\H=H^2_2$ be the Drury--Arveson space on $\bB_2$. Let $\fa=\{0\}$. We saw in Example \ref{E:DAAaGelf} that the Gelfand transform of $\A_\fa$ is not completely isometric, and so neither is that of $\M_\fa$ by the remark above.

Next, let $\fb\subset \M(\H)$ be the weak-$*$ closed ideal generated by $z_2$.  In this case, $\M_\fb$ is unitarily equivalent to $\M(H^2_1)$, which has a completely isometric Gelfand transform as seen in Example \ref{E:HardyAaGelf}.
\qed
\end{example}

We close this section by giving an example where an ideal $\fJ$ fitting into the framework of Corollary \ref{C:Maunifalg} can be identified.

\begin{example}\label{E:commHinf}
Let $\H=H^2_1$ be the Hardy space on the unit disc. Let $\fa=\{0\}$ so that $\H=\H_\fa$ and $\M_\fa=\M(\H)$. In that case, there is a unital completely isometric isomorphism $\Theta:\M(\H)\to H^\infty(\bD)$ such that
\[
 \Theta(M_\psi)=\psi, \quad\psi\in \M(\H)
\]
(see Example \ref{E:HardyAaGelf} for details). Let $\fW\subset B(\H)$ be the $\rC^*$-algebra generated by the Toeplitz operators with  symbols in $L^\infty(\bT,m)$, so that $\M(\H)\subset \fW$ (see \cite[Chapter 7]{douglas1998} for details). Let $\fC\subset \fW$ denote the commutator ideal of $\fW$. It follows from \cite[Theorem 7.11]{douglas1998} that there is a $*$-isomorphism  $\rho:\fW/\fC\to L^\infty(\bT,m)$ such that
\[
 \rho(M_\psi+\fC)=\psi, \quad \psi\in \M(\H).
\]
Next, let $\fJ=\fC\cap \rC^*(\M(\H))$. A standard theorem for $\rC^*$-algebras \cite[Corollary II.5.1.3]{blackadar2006} implies that there is a $*$-isomorphism 
\[
\Xi:\rC^*(\M(\H))/\fJ\to ( \rC^*(\M(\H))+\fC)/\fC
\]
such that
\[
 \Xi(M_\psi+\fJ)=M_\psi+\fC, \quad \psi\in \M(\H).
\]
Let $q_\fJ:\rC^*(\M(\H))\to \rC^*(\M(\H))/\fJ$ be the quotient map. We then note that 
\[
 q_\fJ|_{\M(\H)}=\Xi^{-1}\circ \rho^{-1}\circ \Theta
 \]
 so that $q_\fJ$ is completely isometric on $\M(\H)$. Moreover, $\fW/\fC$ is commutative by choice of $\fC$, and thus so is 
 \[
  \rC^*(\M(\H))/\fJ=\Xi^{-1}( ( \rC^*(\M(\H))+\fC)/\fC).
  \]
Applying Corollary \ref{C:Maunifalg}, we can now recover the classical fact that the Gelfand transform of $\M(\H)$ is completely isometric and that the identity representation of $\rC^*(\M(\H))$ is not a boundary representation for $\M(\H)$.
 \qed
\end{example}

It is known that the ideal $\fJ$ above contains the compact operators \cite[Proposition 7.12]{douglas1998}, but the containment is proper. Indeed, it is readily seen that $M_\theta$ is not essentially normal whenever $\theta$ is an inner function but not a finite Blaschke product, so in particular $\rC^*(\M(\H))/\fK(\H)$ is not commutative.


\section{The Gelfand transform of $\A_\fa$ often fails to be completely isometric}\label{S:polyapprox}

In the previous section, we saw that the determining whether the identity representation of $\fT_\fa$ is a boundary representation for $\A_\fa$ is equivalent to deciding whether the Gelfand transform of $\A_\fa$ fails to be completely isometric. In this section, we seek concrete conditions that address the latter property.

First, we improve on part of \cite[Theorem 8.8]{CH2018}, removing the essential normality condition that was required therein. Recall that a weak-$*$ closed ideal $\fa\subset \M(\H)$ is said to be \emph{homogeneous} if it is the weak-$*$ closure of a polynomial ideal generated by some collection of homogeneous polynomials.

\begin{theorem}\label{T:homogquotientGelf}
 Let $\H$ be a maximal regular unitarily invariant complete Nevanlinna--Pick space. Let $\fa\subset \M(\H)$ be a proper weak-$*$ closed homogeneous ideal such that $\H_\fa$ has dimension greater than one. If $\A_\fa$ is not completely isometrically isomorphic to the disc algebra $A(\bD)$, then the Gelfand transform of $\A_\fa$ is not completely isometric.
\end{theorem}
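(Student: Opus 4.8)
The plan is to prove the contrapositive: assuming the Gelfand transform of $\A_\fa$ is completely isometric, I will show that $\A_\fa$ is completely isometrically isomorphic to $A(\bD)$. The engine of the argument is that a completely isometric Gelfand transform forces every matricial norm over $\A_\fa$ to be computed by its characters; by Theorem \ref{T:specAaTa}(i) these are the evaluations $\tau_\lambda$ with $\lambda\in\widehat{\supp\fa}$, and $\widehat{\supp\fa}\subset\overline{\bB_d}$ because $\overline{\bB_d}$ is polynomially convex while $\supp\fa\subset\overline{\bB_d}$. I will contradict this using the homogeneous grading on $\H_\fa$.

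First I would record the structure coming from homogeneity. Since $\fa$ is the weak-$*$ closure of an ideal generated by homogeneous polynomials, it is invariant under the gauge unitaries $W_t$ ($|t|=1$) and hence graded, and every element of such a proper ideal vanishes at $0$; therefore $1\perp[\fa\H]$, so $1\in\H_\fa$, and the grading descends to $\H_\fa=\bigoplus_{k\ge0}\H_\fa^{(k)}$ with $\H_\fa^{(0)}=\bC1$. Noting that the degree-one component of $[\fa\H]$ is precisely the space $\fa^{(1)}$ of linear forms lying in $\fa$, a unitary change of the coordinates $z_1,\dots,z_d$ (legitimate by unitary invariance of the kernel) lets me assume $\H_\fa^{(1)}=\spn\{z_1,\dots,z_m\}$ and $\fa^{(1)}=\spn\{z_{m+1},\dots,z_d\}$, where $m=\dim\H_\fa^{(1)}$; the standing hypotheses force $m\ge1$. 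As $z_{m+1},\dots,z_d\in\fa=\ker\Gamma$ we get $\fz_{m+1}=\dots=\fz_d=0$, so $\A_\fa$ is generated by $\fz_1,\dots,\fz_m$ and $\fa$ contains the weak-$*$ closed ideal $\fb_0$ generated by $z_{m+1},\dots,z_d$.

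The decisive step --- and the one that bypasses the essential normality hypothesis of \cite[Theorem 8.8]{CH2018} --- is a norm estimate. Suppose $m\ge2$ and consider the column $C=[\fz_1;\dots;\fz_m]\in\bM_{m,1}(\A_\fa)$. Since $C^*C=\sum_{j=1}^m\fz_j^*\fz_j\ge0$ and $\fz_j1=z_j$, we obtain
\[
 \|C\|^2=\Big\|\sum_{j=1}^m\fz_j^*\fz_j\Big\|\ge\sum_{j=1}^m\|z_j\|_\H^2=\frac{m}{a_1}\ge m\ge2,
\]
using the reproducing-kernel identity $\|z_j\|_\H^2=a_1^{-1}$ together with $a_1=b_1\le\sum_nb_n\le1$. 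But if the Gelfand transform of $\A_\fa$ is completely isometric, then $\|C\|$ equals the norm of its image in $\bM_{m,1}(C(\Delta(\A_\fa)))$, namely $\sup\{(\textstyle\sum_{j=1}^m|\lambda_j|^2)^{1/2}:\lambda\in\widehat{\supp\fa}\}\le1$, a contradiction. Hence $m=1$.

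It remains to treat $m=1$. Identifying $\M(\H)/\fb_0$ with the multiplier algebra of the one-variable maximal regular unitarily invariant complete Nevanlinna--Pick space $\H^{(1)}=\overline{\spn}\{z_1^n:n\ge0\}$, the image of $\fa$ there is a graded weak-$*$ closed ideal; if it were nonzero it would contain a power $z_1^N$ with $N\ge1$, giving $\fz_1^N=0$ while $\fz_1\ne0$ (as $\dim\H_\fa>1$), so $\A_\fa$ would have a nonzero nilpotent and its Gelfand transform would fail to be injective --- impossible. Thus $\fa=\fb_0$ and $\A_\fa=\A(\H^{(1)})$. Now $\|\fz_1\|\ge\|\fz_11\|=\|z_1\|_{\H^{(1)}}=a_1^{-1/2}$, whereas complete isometry of the Gelfand transform forces $\|\fz_1\|=\sup\{|\chi(\fz_1)|:\chi\in\Delta(\A_\fa)\}\le1$ since $\chi(\fz_1)\in\overline{\bD}$; therefore $a_1=1$, which by maximality means $\H$ is the Drury--Arveson space, so $\H^{(1)}=H^2_1$ and $\A_\fa\cong A(\bD)$ completely isometrically by Example \ref{E:HardyAaGelf}. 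I expect the main labor to be the homogeneity bookkeeping of the second paragraph --- establishing that $\fa$ is graded, that the degree-one component of $[\fa\H]$ is $\fa^{(1)}$, and that $1\in\H_\fa$ --- together with the (routine) classification of graded weak-$*$ closed ideals over $\H^{(1)}$; once these are in place the operator-norm comparisons themselves are short.
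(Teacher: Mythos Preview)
Your proof is correct. The homogeneity bookkeeping you flag (gradedness of $\fa$ and of $[\fa\H]$, the identification $[\fa\H]^{(1)}=\fa^{(1)}$, and the one-variable classification of graded ideals) is indeed routine; the only place I would tighten the write-up is the $m=1$ step, where rather than invoking an isomorphism $\M(\H)/\fb_0\cong\M(\H^{(1)})$ you can argue directly on the grading of $\H_\fa\subset\H_{\fb_0}=\ol{\spn}\{z_1^n:n\ge0\}$: if $\H_\fa\subsetneq\H_{\fb_0}$ then some $z_1^N\in[\fa\H]$, invariance propagates this to all higher powers, and $\fz_1$ is nilpotent.

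Your route is genuinely different from the paper's. The paper first converts the problem into showing that the quotient map $q:\fT_\fa\to\O_\fa$ fails to be completely isometric on $\A_\fa$ (via Theorems \ref{T:bdrythm} and \ref{T:unifquotientAa}), then uses the essential-norm formula of Theorem \ref{T:Aaessnorm} --- which passes through Taylor spectra and minimal spherical-unitary dilations --- to bound $\|q^{(n)}(A)\|$ by the supremum over the boundary zero set $\Z_{\ol{\bB_d}}(\fa\cap\A(\H))\cap\bS_d$, and finally invokes the argument of \cite[Theorem 8.8]{CH2018}. You bypass the entire essential-norm machinery: you work directly with the Gelfand transform, use only Theorem \ref{T:specAaTa}(i) to identify the characters with points of $\widehat{\supp\fa}\subset\ol{\bB_d}$, and then carry out the column-norm comparison explicitly. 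The underlying operator-theoretic inequality (the column $C=[\fz_1;\dots;\fz_m]$ has norm at least $\sqrt{m/a_1}$ because $1\in\H_\fa$) is almost certainly the same one driving the cited result in \cite{CH2018}, so the punchline is shared; what you gain is a considerably more elementary and self-contained reduction, and in particular your argument makes transparent exactly where maximality enters (only to force $a_1=b_1=1$ in the $m=1$ endgame, via $\sum_n b_n=1$).
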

\begin{proof}
By Theorems \ref{T:bdrythm} and \ref{T:unifquotientAa}, we must show that the quotient map $q:\fT_\fa\to \O_\fa$ is not completely isometric on $\A_\fa$. For $A=[\phi_{ij}(Z_\fa)]_{ij}\in \bM_n(\A_\fa)$, it follows from Theorems \ref{T:specAaTa} and \ref{T:Aaessnorm} that
\begin{align*}
 \|q^{(n)}(A)\|&=\max\{ \|\chi^{(n)}(A)\|_{\bM_n}: \chi\in \Delta(\fT_\fa)\}\\
 &\leq \max\{ \|[\phi_{ij}(\zeta)]_{ij}\|_{\bM_n}: \zeta\in \Z_{\ol{\bB_d}}(\fa \cap \A(\H))\cap \bS_d\}\\
 &\leq \|A\|.
\end{align*}
Since we assume that $\A_\fa$ is not completely isometric to the disc algebra, we may now argue as in the proof of \cite[Theorem 8.8]{CH2018} to see that $q$ is not completely isometric on $\A_\fa$.
\end{proof}

Up to now, our attention has been mostly devoted to maximal spaces. The next result completely settles the non-maximal case.

\begin{theorem}\label{T:nonmax}
 Let $\H$ be a  regular unitarily invariant complete Nevanlinna--Pick space which is not maximal. Let $\fa\subset \M(\H)$ be a proper weak-$*$ closed ideal such that $\H_\fa$ has dimension greater than one. Then, the Gelfand transform of $\A_\fa$ is not completely isometric.
 \end{theorem}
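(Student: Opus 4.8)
The plan is to produce, for a suitable large integer $N$, a single \emph{row} of elements of $\A_\fa$ whose operator norm is strictly larger than the maximum of the norms of its images under the characters of $\A_\fa$. Since the Gelfand transform is always contractive, such a row, regarded as an element of a square matrix algebra over $\A_\fa$, immediately witnesses that the Gelfand transform of $\A_\fa$ is not completely isometric (in fact not isometric at that matrix level). The non-maximality hypothesis enters precisely as the strict inequality $\sum_{n=1}^\infty b_n<1$: this caps all the character values strictly below $1$, whereas the operator norm of the row will be shown to approach $1$.

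Concretely, for each multi-index $\alpha$ with $1\leq |\alpha|\leq N$, I set $T_\alpha=b_{|\alpha|}^{1/2}(|\alpha|!/\alpha!)^{1/2}Z_\fa^\alpha\in\A_\fa$ and form the finite row $R_N=(T_\alpha)_{1\leq|\alpha|\leq N}$, which I view inside $\bM_m(\A_\fa)$ (with $m$ the number of such $\alpha$) after padding with zero rows, so that its norm is unchanged. First I would compute $\|R_N\|^2=\|R_NR_N^*\|=\|S_N\|$, where $S_N=\sum_{1\leq|\alpha|\leq N}b_{|\alpha|}\tfrac{|\alpha|!}{\alpha!}Z_\fa^\alpha Z_\fa^{\alpha*}$. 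By Lemma \ref{L:irred}(ii) the positive operators $S_N$ increase in the strong operator topology to $I-Q$, where $Q=P_{\H_\fa}P_{\bC 1}|_{\H_\fa}$ has rank one and satisfies $\langle Q\xi,\xi\rangle=|\langle \xi,1\rangle|^2$ for $\xi\in\H_\fa$. Because $\H_\fa$ has dimension greater than one, there is a unit vector $\xi\in\H_\fa$ with $\langle\xi,1\rangle=0$, whence $Q\xi=0$ and $\|I-Q\|=1$. A standard monotonicity argument for increasing positive operators then gives $\|S_N\|=\sup_{\|\xi\|=1}\langle S_N\xi,\xi\rangle\to\sup_{\|\xi\|=1}\langle(I-Q)\xi,\xi\rangle=1$, so that $\|R_N\|\to 1$.

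Next I would bound the character values. By Theorem \ref{T:specAaTa}(i), every $\chi\in\Delta(\A_\fa)$ equals $\tau_\lambda$ for some $\lambda\in\widehat{\supp\fa}\subseteq\ol{\bB_d}$, so $\chi(R_N)$ is the scalar row $(b_{|\alpha|}^{1/2}(|\alpha|!/\alpha!)^{1/2}\lambda^\alpha)_{1\leq|\alpha|\leq N}$, whose squared norm is, by the multinomial theorem, $\sum_{n=1}^N b_n\|\lambda\|^{2n}$. Since $\|\lambda\|\leq 1$, this is at most $\sum_{n=1}^\infty b_n$, which is strictly less than $1$ exactly because $\H$ is not maximal. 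Writing $g$ for the Gelfand transform of $\A_\fa$, this yields $\|g^{(m)}(R_N)\|=\max_{\chi\in\Delta(\A_\fa)}\|\chi(R_N)\|\leq\big(\sum_{n=1}^\infty b_n\big)^{1/2}<1$ for every $N$. Choosing $N$ large enough that $\|R_N\|>\big(\sum_{n=1}^\infty b_n\big)^{1/2}$ then gives an element of $\bM_m(\A_\fa)$ on which $g^{(m)}$ fails to be isometric, so $g$ is not completely isometric, as required.

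I expect the only delicate point to be the passage from the strong-operator convergence $S_N\nearrow I-Q$ to the numerical convergence $\|S_N\|\to 1$. This is where I must resist passing any norm-discontinuous operation (such as the quotient by the compacts) through the infinite sum, and instead argue directly with the quadratic forms $\langle S_N\xi,\xi\rangle$, using monotonicity together with the explicit null vector of $Q$ supplied by the hypothesis $\dim\H_\fa>1$. Everything else is a direct computation with the row $R_N$ and the character description of $\Delta(\A_\fa)$; note that, in contrast with the maximal case, no appeal to Lemma \ref{L:sphunitUEPk} or to essential normality of $Z_\fa$ is needed, and the argument is uniform in the dimension of $\H_\fa$.
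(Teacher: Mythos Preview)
Your proof is correct and follows essentially the same approach as the paper: both build the truncated weighted row $R_N=(b_{|\alpha|}^{1/2}(|\alpha|!/\alpha!)^{1/2}Z_\fa^\alpha)_{1\le|\alpha|\le N}$, use Lemma \ref{L:irred}(ii) together with $\dim\H_\fa>1$ to find a unit vector annihilated by $Q=P_{\H_\fa}P_{\bC 1}|_{\H_\fa}$ and thereby force $\|R_N\|\to 1$, and then bound every character value by $(\sum_n b_n)^{1/2}<1$ via the multinomial identity. The only cosmetic differences are that the paper invokes Theorem \ref{T:specAaTa}(ii) rather than (i) to locate $\lambda\in\ol{\bB_d}$, and phrases the norm estimate as ``$\|R\|=1$, hence some truncation exceeds $\sqrt{r}$'' rather than spelling out the monotone quadratic-form argument you give.
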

 \begin{proof}
 By assumption, there is a sequence $(b_n)$ of non-negative numbers such that $\sum_{n=1}^\infty b_n<1$ and with the property that
 \[
1-\frac{1}{k(z,w)}= \sum_{n=1}^\infty b_n\langle z,w\rangle^n, \quad z,w\in \bB_d.
 \]
 For convenience, put $r=\sum_{n=1}^\infty b_n.$ For every $\lambda\in \ol{\bB_d}$, we see that
 \[
 \sum_{n=1}^\infty b_n \|\lambda\|^{2n}\leq r<1.
 \]
 Consider the row operator $R=\left(b_{|\alpha|}^{1/2} \left(\frac{|\alpha|!}{\alpha!}\right)^{1/2}Z_\fa^\alpha \right)_{|\alpha|\geq 1}$. It follows from Lemma \ref{L:irred}(ii) that 
 \[
 RR^*=I-P_{\H_\fa}P_{\bC 1}|_{\H_\fa}.
 \]
 Since $\H_\fa$ has dimension at least two, we can find two orthogonal unit vectors $f,g\in \H_\fa$. If $f(0)\neq 0$, then we may consider the non-zero vector
 \[
 h= g-\frac{g(0)}{f(0)}f\in \H_\fa
 \]
 which satisfies $h(0)=0$. Thus, we conclude that $\H_\fa$ always contains a non-zero vector $h$ such that $P_{\bC 1} h=0$. We find
 \[
 RR^*h=(I-P_{\H_\fa}P_{\bC 1}|_{\H_\fa})h=h
 \]
 so that $\|R\|=1$. Hence, there is $N\geq 1$ such that the truncated row \[R_N=\left(b_{|\alpha|}^{1/2} \left(\frac{|\alpha|!}{\alpha!}\right)^{1/2}Z_\fa^\alpha \right)_{1\leq |\alpha|\leq N}\] satisfies $ \|R_N\|>\sqrt{r}$. On the other hand, given $ \chi\in \Delta(\A_\fa)$, by Theorem \ref{T:specAaTa}(ii) there is $\lambda\in \Z_{\ol{\bB_d}}(\fa\cap \A(\H))$ such that
 \begin{align*}
\left\|  \left(\chi\left(b_{|\alpha|}^{1/2} \left(\frac{|\alpha|!}{\alpha!}\right)^{1/2}Z_\fa^\alpha\right) \right)_{1\leq |\alpha|\leq N}\right\|^2&=\sum_{n=1}^N b_n \sum_{|\alpha|=n} \frac{n!}{\alpha!} |\lambda^\alpha|^2\\
&\leq  \sum_{n=1}^\infty b_n \|\lambda\|^{2n}\leq r.
 \end{align*}
 We conclude that Gelfand transform of $\A_\fa$ is not completely isometric.
 \end{proof}

Next, we explore ``size'' restrictions on the boundary portion of the support of $\fa$.
Our approach hinges on the following fact, which generalizes a result on page 444 of \cite{NagyRiesz1990}. To simplify the statement, we introduce some terminology. Given a compact subset $K\subset \ol{\bB_d}$, we say that it is an \emph{approximation set} if the polynomials are uniformly dense in the continuous functions on $K$. Moreover, given a unital contractive representation $\rho$ of $\A(\H)$, we say that $K$ is a \emph{spectral set} for $\rho(\A(\H))$ if 
\[
\|\rho(\phi)\| \leq  \max_{w\in K} |\phi(w)|, \quad \phi\in \A(\H).
\]

\begin{lemma}\label{L:spectralset}
Let $\H$ be a maximal regular unitarily invariant complete Nevanli\-nna--Pick space with kernel $k$.  Let $(b_n)$ be the sequence of non-negative real numbers such that
\[
1-\frac{1}{k(z,w)}=\sum_{n=1}^\infty b_n \langle z,w \rangle^n, \quad z,w\in \bB_d.
\]
Let $\rho:\A(\H)\to B(\E)$ be a unital contractive homomorphism such that $\rho(\A(\H))$ admits an approximation set $K\subset \bS_d$ as a spectral set. Then, $(\rho(z_1),\ldots,\rho(z_d))$ is a spherical unitary and
\[
\sum_{n=1}^\infty b_n \sum_{|\alpha|=n}\frac{n!}{\alpha!}\rho(z^\alpha) \rho(z^{\alpha})^*= I.
\]
\end{lemma}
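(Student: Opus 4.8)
The plan is to promote $\rho$ to a genuine $*$-representation of $C(K)$ and then extract both assertions from elementary identities inside $C(K)$. First I would observe that, since $K\subset\bS_d$ is a spectral set for $\rho(\A(\H))$, for every polynomial $p$ one has $\|\rho(p)\|\leq \max_{w\in K}|p(w)| = \|p|_K\|_{C(K)}$; hence $\rho(p)$ depends only on $p|_K$, and the assignment $p|_K\mapsto \rho(p)$ is a well-defined unital contractive homomorphism from the algebra of polynomial restrictions into $B(\E)$. Because $K$ is an approximation set, this subalgebra is dense in $C(K)$, so the assignment extends uniquely to a unital contractive homomorphism $\sigma:C(K)\to B(\E)$ with $\sigma(p|_K)=\rho(p)$ for every polynomial $p$. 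The one delicate point --- which I expect to be the main obstacle --- is that such a map is automatically a $*$-homomorphism. I would argue this as follows: every $f\in C(K)$ with $\|f\|_{C(K)}\leq 1$ is the average of the two unitaries $f\pm i\sqrt{1-|f|^2}$, so $C(K)$ is the linear span of its unitaries; and for a unitary $u\in C(K)$ both $\sigma(u)$ and $\sigma(u)^{-1}=\sigma(\bar u)$ are contractions, which forces $\sigma(u)$ to be unitary and hence $\sigma(\bar u)=\sigma(u)^{*}$. Preservation of adjoints then propagates by (conjugate-)linearity to all of $C(K)$.

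Granting this, I would set $u_j=z_j|_K\in C(K)$, so that $\sigma(u_j)=\rho(z_j)$, and deduce the first conclusion directly. Since $K\subset\bS_d$, the function $\sum_{j=1}^d |u_j|^2$ equals $1$ identically on $K$, i.e. $\sum_{j=1}^d u_j^{*}u_j=1$ in $C(K)$; applying $\sigma$ gives $\sum_{j=1}^d \rho(z_j)^{*}\rho(z_j)=I$. Each $u_j$ is a normal element of the commutative algebra $C(K)$, so each $\rho(z_j)=\sigma(u_j)$ is normal, whence $\sum_{j=1}^d \rho(z_j)\rho(z_j)^{*}=\sum_{j=1}^d \rho(z_j)^{*}\rho(z_j)=I$; together with the fact that the $\rho(z_j)$ commute (as $\rho$ is a homomorphism), this says $(\rho(z_1),\ldots,\rho(z_d))$ is a spherical unitary. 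Note that this part uses neither the maximality of $\H$ nor the value of the coefficients $b_n$.

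For the remaining identity I would invoke the multinomial theorem: for each $n\geq 1$,
\[
 \sum_{|\alpha|=n}\frac{n!}{\alpha!}\,u^\alpha\,\ol{u^\alpha}=\Big(\sum_{j=1}^d |u_j|^2\Big)^{\!n}=1 \quad\text{in }C(K),
\]
so $\sum_{n=1}^{N} b_n\sum_{|\alpha|=n}\frac{n!}{\alpha!}u^\alpha\ol{u^\alpha}=\sum_{n=1}^{N}b_n$ (viewed as a constant function), which converges in $C(K)$ to $1$ precisely because $\H$ is maximal, i.e. $\sum_{n=1}^\infty b_n=1$. Applying the contractive map $\sigma$ and using $\sigma(u^\alpha)=\rho(z^\alpha)$ together with $\sigma(\ol{u^\alpha})=\rho(z^\alpha)^{*}$, I would conclude that $\sum_{n=1}^\infty b_n\sum_{|\alpha|=n}\frac{n!}{\alpha!}\rho(z^\alpha)\rho(z^\alpha)^{*}=\sigma(1)=I$, with the partial sums in fact equal to $\big(\sum_{n=1}^N b_n\big)I$ and hence convergent in operator norm. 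Beyond the verification that $\sigma$ is a $*$-homomorphism, nothing here should require more than transcribing these identities from $C(K)$ to $B(\E)$.
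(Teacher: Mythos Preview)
Your argument is essentially the paper's: extend $\rho$ to a unital contractive homomorphism on $C(K)$, observe that it is automatically a $*$-homomorphism (the paper simply cites \cite[Proposition 2.11]{paulsen2002} for this), and then read off both identities from $\sum_j |z_j|^2=1$ on $K\subset\bS_d$ together with maximality. One small slip: the decomposition $f=\tfrac12\big((f+i\sqrt{1-|f|^2})+(f-i\sqrt{1-|f|^2})\big)$ yields unitaries only when $f$ is \emph{self-adjoint} (for complex $f$ one computes $|f\pm i\sqrt{1-|f|^2}|^2=1\pm 2(\operatorname{Im}f)\sqrt{1-|f|^2}$), so first write $f=\operatorname{Re}f+i\operatorname{Im}f$ and apply the trick to each real part; with that fix your $*$-preservation argument goes through and the rest is correct.
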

\begin{proof}
	Let $\P(K)\subset C(K) $ denote the space of polynomials. Because $K$ is a spectral set for $\rho(\A(\H))$,  we find
\[
\|\rho(p)\|\leq \max_{w\in K}|p(w)|, \quad p\in \P(K).
\]
On the other hand, since $K$ is an approximation set, by density we obtain a unital contractive homomorphism $\pi:C(K)\to B(\E)$ such that
\[
\pi(p)=\rho(p), \quad p\in \P(K).
\] 
Thus, $\pi$ must be a $*$-homomorphism \cite[Proposition 2.11]{paulsen2002} and because $K\subset \bS_d$ we see that 
\[
 \sum_{j=1}^d \rho(z_j) \rho(z_j)^*=\pi\left( \sum_{j=1}^d |z_j|^2\right)=I
\]
so that $(\rho(z_1),\ldots,\rho(z_d))$ is a spherical unitary.
Hence
\begin{align*}
\sum_{n=1}^\infty b_n \sum_{|\alpha|=n}\frac{n!}{\alpha!}\rho(z^\alpha) \rho(z^{\alpha})^*&=\sum_{n=1}^\infty b_n \left( \sum_{j=1}^d \rho(z_j) \rho(z_j)^*\right)^n\\
&=\sum_{n=1}^\infty b_n I=I
\end{align*}
where the last equality follows by maximality of $\H$.
\end{proof}

We can now apply the previous theorem to obtain a sufficient condition for the Gelfand transform of $\A_\fa$ not to be completely isometric.

\begin{corollary}\label{C:idbdryPKCK2}
Let $\H$ be a maximal regular unitarily invariant complete Nevanli\-nna--Pick space. Let $\fa\subset \M(\H)$ be a proper weak-$*$ closed ideal such that $\H_\fa$ has dimension greater than one and $\supp \fa\cap \bS_d$ is an approximation set. Then, the Gelfand transform of $\A_\fa$ is not completely isometric.
\end{corollary}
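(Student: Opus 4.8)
The plan is to show that the quotient map $q\colon\fT_\fa\to\O_\fa$ fails to be completely isometric on $\A_\fa$; by Theorem~\ref{T:bdrythm} this makes the identity representation of $\fT_\fa$ a boundary representation for $\A_\fa$, and then Theorem~\ref{T:unifquotientAa} yields that the Gelfand transform of $\A_\fa$ is not completely isometric. We may assume $\H_\fa$ is infinite-dimensional, since otherwise $\O_\fa=0$ and $q$ is already not isometric on the unital algebra $\A_\fa$. Write $K=\supp\fa\cap\bS_d$.

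First I would record two facts that hold under the stated hypotheses alone. By Theorem~\ref{T:specAaTa}(iv) the characters of $\fT_\fa$ are precisely the $\pi_\zeta$ for $\zeta\in K$, and Theorem~\ref{T:Aaessnorm} (with $n=1$) gives $\|q(\phi(Z_\fa))\|=\max_{\zeta\in K}|\phi(\zeta)|$ for all $\phi\in\A(\H)$. Hence the unital contractive homomorphism $\rho=q\circ\Gamma\colon\A(\H)\to\O_\fa$ defined by $\rho(\phi)=q(\phi(Z_\fa))$ admits the approximation set $K\subset\bS_d$ as a spectral set, so Lemma~\ref{L:spectralset} shows that $(q(\fz_1),\dots,q(\fz_d))=(\rho(z_1),\dots,\rho(z_d))$ is a spherical unitary. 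Since $\fT_\fa=\rC^*(\fz_1,\dots,\fz_d)$ and $q$ is a surjective $*$-homomorphism, $\O_\fa=\rC^*(q(\fz_1),\dots,q(\fz_d))$ is generated by commuting normals and is therefore commutative. Moreover, identifying $\O_\fa$ with $C(\Delta(\O_\fa))$ and $\Delta(\O_\fa)=\Delta(\fT_\fa)$ with $K$ (every character of $\fT_\fa$ annihilates $\fK(\H_\fa)$ by Lemma~\ref{L:compactsplit}), the function $\rho(\phi)$ becomes $\phi|_K$; since the polynomials are dense in $\A(\H)$ and $K$ is an approximation set, it follows that $q(\A_\fa)=\overline{\{\phi|_K:\phi\in\A(\H)\}}=C(K)=\O_\fa$. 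Thus $q$ maps $\A_\fa$ \emph{onto} the commutative $\rC^*$-algebra $\O_\fa$.

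Now suppose for contradiction that $q$ is completely isometric on $\A_\fa$. Then $q|_{\A_\fa}$ is an isometric, surjective, unital algebra homomorphism onto $\O_\fa$, and its inverse $s=(q|_{\A_\fa})^{-1}\colon\O_\fa\to\A_\fa\subset B(\H_\fa)$ is a unital isometric algebra homomorphism. A unital contractive homomorphism of a $\rC^*$-algebra into $B(\H)$ sends unitaries to unitaries (the image of a unitary is invertible, and both it and its inverse have norm at most one, so it is unitary) and is therefore $*$-preserving; since the unitaries span $\O_\fa$, the map $s$ is a $*$-homomorphism. Consequently $\A_\fa=s(\O_\fa)$ is a $\rC^*$-subalgebra of $B(\H_\fa)$, so $\fT_\fa=\rC^*(\A_\fa)=\A_\fa\cong\O_\fa$ is commutative. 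This contradicts Lemma~\ref{L:irred}(iii), since $\fT_\fa$ contains the compact operators on the space $\H_\fa$ of dimension greater than one. Therefore $q$ is not completely isometric on $\A_\fa$, which completes the argument.

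The delicate point is that deducing only ``$\O_\fa$ is commutative and $q$ is completely isometric on $\A_\fa$'' leads in a circle, since that pair of statements is equivalent to the Gelfand transform of $\A_\fa$ being completely isometric. The genuinely new contribution of the approximation-set hypothesis is the \emph{surjectivity} of $q|_{\A_\fa}$ onto $\O_\fa$, which is what turns a completely isometric embedding into an isometric isomorphism onto a $\rC^*$-algebra and hence, via the automatic $*$-preservation of isometric homomorphisms of $\rC^*$-algebras, forces $\A_\fa$ to coincide with the self-adjoint algebra $\fT_\fa$.
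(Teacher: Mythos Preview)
Your argument is correct in substance, but there is one ordering slip: in your second paragraph you assert $q(\A_\fa)=\overline{\{\phi|_K:\phi\in\A(\H)\}}$ before assuming $q$ is isometric on $\A_\fa$. Without that assumption $q(\A_\fa)$ need not be closed; all you actually know at that stage is $\overline{q(\A_\fa)}=C(K)$. This is harmless, since once you assume $q|_{\A_\fa}$ is (completely) isometric the image is closed and the surjectivity follows, but the sentence should be moved or rephrased.

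The route, however, is genuinely more circuitous than the paper's. The paper applies Lemma~\ref{L:spectralset} not to $q\circ\Gamma$ but directly to $\rho=\Gamma:\phi\mapsto\phi(Z_\fa)$: under the contradiction hypothesis that $q$ is completely isometric on $\A_\fa$, Theorems~\ref{T:specAaTa}(iv) and~\ref{T:Aaessnorm} make $K=\supp\fa\cap\bS_d$ a spectral set for $\Gamma(\A(\H))$ itself, so Lemma~\ref{L:spectralset} yields
\[
\sum_{n=1}^\infty b_n\sum_{|\alpha|=n}\frac{n!}{\alpha!}Z_\fa^\alpha Z_\fa^{\alpha*}=I,
\]
which contradicts Lemma~\ref{L:irred}(ii) in one line. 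Your approach instead applies the lemma at the quotient level (which needs no hypothesis), proves $\O_\fa$ is commutative and that $q|_{\A_\fa}$ is surjective, and then uses the automatic $*$-preservation of an isometric homomorphism out of a $\rC^*$-algebra to force $\A_\fa=\fT_\fa$. This is a valid alternative and has the pleasant side effect of exhibiting $\O_\fa\cong C(K)$ unconditionally, but the paper's choice of $\rho$ collapses the whole thing to a single application of Lemma~\ref{L:spectralset} plus Lemma~\ref{L:irred}(ii).
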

\begin{proof}
Assume that the quotient map $q:\fT_\fa\to \O_\fa$ is completely isometric on $\A_\fa$. By Theorems \ref{T:specAaTa}(iv) and \ref{T:Aaessnorm}, we see that  see that $\supp \fa\cap \bS_d$ is a spectral set for 
\[
\{\phi(Z_\fa):\phi\in \A(\H)\}.
\]
Applying Lemma \ref{L:spectralset} we find
\[
\sum_{n=1}^\infty b_n \sum_{|\alpha|=n}\frac{n!}{\alpha!} Z_\fa^\alpha Z_\fa^{\alpha *}=I.
\]
But this contradicts Lemma \ref{L:irred}(ii). Thus, $q$ is not completely isometric on $\A_\fa$, and the conclusion follows from Theorems \ref{T:bdrythm} and \ref{T:unifquotientAa}.
\end{proof}

We note that the Gelfand transform of $\A_\fa$ can certainly fail to be completely  isometric without $\supp \fa\cap \bS_d$ being an approximation set: this happens for instance when $\H=H^2_3$ and $\fa=\langle z_1\rangle $. 

There are known concrete sufficient conditions that guarantee that a subset is an approximation set. We refer the reader to the papers quoted in the proof for the precise definitions of the properties appearing in the statement below.

\begin{corollary}\label{C:PKCK}
Let $\H$ be a maximal regular unitarily invariant complete Nevanli\-nna--Pick space. 
Let $\fa\subset \M(\H)$ be a proper weak-$*$ closed ideal such that $\H_\fa$ has dimension greater than one.  Let $K=\supp \fa\cap \bS_d$. Then, the Gelfand transform of $\A_\fa$ is not completely isometric whenever  one of the following conditions holds.
	\begin{enumerate}[{\rm(i)}]
		\item $K\subset \bR^d$.
		\item $K$ is polynomially convex and has zero $2$-Hausdorff measure.
		\item $K$ is a polynomially convex real analytic subvariety of $\bC^d$.
		\item $K$ is totally null for the ball algebra.
	\end{enumerate}
\end{corollary}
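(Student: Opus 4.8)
The strategy is to reduce the statement to Corollary~\ref{C:idbdryPKCK2}: that result already asserts that the Gelfand transform of $\A_\fa$ fails to be completely isometric as soon as $\supp\fa\cap\bS_d$ is an approximation set (and its remaining hypotheses coincide verbatim with those of Corollary~\ref{C:PKCK}), so it suffices to verify, in each of the four cases, that the compact set $K=\supp\fa\cap\bS_d$ is an approximation set, i.e.\ that the polynomials are uniformly dense in $C(K)$. Each of conditions (i)--(iv) is a classical sufficient condition for this density property in the approximation theory of several complex variables; a convenient entry point to the relevant literature is the survey \cite{levenberg2006}, and the proof consists essentially of quoting the appropriate theorem in each case and then invoking Corollary~\ref{C:idbdryPKCK2}.

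For (i), when $K\subset\bR^d$ the coordinate functions satisfy $\ol{z_j}|_K=z_j|_K$, so the polynomials, restricted to $K$, form a unital self-adjoint subalgebra of $C(K)$ that separates points; the Stone--Weierstrass theorem then gives density. For (iv), I would invoke the peak interpolation theorem for the ball algebra $\AB$ (the Bishop--Glicksberg--K\"onig / Rudin--Carleson circle of results): a totally null closed subset $K\subset\bS_d$ is a peak interpolation set for $\AB$, so every $f\in C(K)$ is the restriction to $K$ of some $g\in\AB$; since the polynomials are uniformly dense in $\AB$ with respect to the supremum norm on $\ol{\bB_d}$, restricting polynomial approximants of $g$ to $K$ shows that $f$ lies in the uniform closure of $\P(K)$ in $C(K)$, so $K$ is an approximation set. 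For (ii) and (iii) one appeals directly to the corresponding polynomial approximation theorems, namely that a polynomially convex compactum of vanishing $2$-dimensional Hausdorff measure, respectively a polynomially convex real-analytic subvariety of $\bC^d$, satisfies $\ol{\P(K)}=C(K)$. In every case $K$ is thus an approximation set, and Corollary~\ref{C:idbdryPKCK2} applies.

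There is no real mathematical obstacle beyond accurately locating and citing these four approximation results; the points that call for some care are that polynomial convexity of $K$ is genuinely used in (ii) and (iii) and cannot be dropped, and that (iv) must be routed through peak interpolation for $\AB$ rather than attempted directly by approximating on $K$ itself. Once the approximation set property is in hand in each case, the conclusion is an immediate application of Corollary~\ref{C:idbdryPKCK2}.
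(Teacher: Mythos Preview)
Your proposal is correct and follows exactly the same approach as the paper: reduce to Corollary~\ref{C:idbdryPKCK2} by verifying that $K$ is an approximation set in each case, citing the relevant approximation theorems (the paper points to \cite{levenberg2006} for (i)--(iii) and \cite[Theorem 10.1.2]{rudin2008} for (iv)). Your explicit Stone--Weierstrass argument for (i) and the peak-interpolation route for (iv) supply a bit more detail than the paper's bare citations, but the substance is identical.
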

\begin{proof}
In view of Corollary \ref{C:idbdryPKCK2}, it suffices to show that $K$ is an approximation set whenever one of these conditions hold. See \cite[pages 95, 97 and 116]{levenberg2006} for (i),(ii) and (iii)  and see  \cite[Theorem 10.1.2]{rudin2008} for (iv). 
\end{proof}

We close this section with a generalization of results from \cite{kennedyshalit2015corr} and \cite{kennedyshalit2015}. Therein, geometric conditions on $\Z_{\bB_d}(\fa)$ are given that ensure that the identity representation of $\fT_\fa$ is a boundary representation for $\A_\fa$.  Our generalization will lead to a refinement of Corollary \ref{C:idbdryessnorm}, thus bringing the resulting statement closer to that found in \cite{kennedyshalit2015corr}. 
Though the argument we present here is closely modeled on that of \cite{kennedyshalit2015corr},
we sketch the details for the benefit of the reader so that the required modifications of the argument are evident. 

For $0<\nu\leq 1$, we define a kernel $k^{(\nu)}$ on $\bB_d$ by the formula
\begin{align*}
	k^{(\nu)}(z,w)&=\frac{1}{(1-\ip{z,w})^\nu}\\&=1+\nu\ip{z,w}+\sum_{n=2}^\infty \frac{\nu(\nu+1)\cdots(\nu+n-1)}{n!}\ip{z,w}^n.
\end{align*}
For the remainder of the section, we will focus entirely on the kernels $k^{(\nu)}$ and on the associated reproducing kernel Hilbert spaces of holomorphic functions $\H_\nu$. We make some preliminary observations.

It follows from \cite[Lemma 2.3 and Section 4]{hartz2017isom}  that among maximal regular unitarily invariant completely Nevanlinna--Pick spaces on $\bB_d$, the scale $\{\H_\nu: 0<\nu\leq 1\}$ consists precisely of those that are well behaved with respect to composition with automorphisms of the ball. More precisely, if  $\Theta:\bB_d\to\bB_d$ is a conformal automorphism, then there is a unitary operator $V_\Theta:\H_\nu\to \H_\nu$ such that
\[
 V_\Theta M_\psi=M_{\psi\circ \Theta}V_\Theta, \quad \psi\in \M(\H_\nu).
\]
For each $1\leq j\leq d$, we let $\theta_j:\bB_d\to\bC$ be corresponding component of $\Theta$, so that 
\[
 \Theta=(\theta_1,\ldots,\theta_d).
 \]
It follows from \cite[Theorem 2.2.5]{rudin2008} and \cite[Theorem 2.10]{CT2019spectrum} that $\theta_j\in \A(\H_\nu)$. We also find that $V_\Theta M_{x_j} V_\Theta^*=M_{\theta_j}$. 

Next, let $\fa\subset \M(\H_\nu)$ be a proper weak-$*$ closed ideal. If we let $\fb=V_\Theta \fa V_\Theta^*$, then we see that $\fb$ is another proper weak-$*$ closed ideal. Moreover, it is readily verified that $V_\Theta \H_\fa=\H_\fb$. Consider the unitary operator $W=V_\Theta|_{\H_\fa}:\H_\fa\to\H_\fb$. Writing $Z_\fa=(\fz_1,\ldots,\fz_d)$, we compute that $W\fz_j W^*=\theta_j(Z_\fb)$ for every $1\leq j\leq d$. In turn, this implies that $W\A_\fa W^*=\A_\fb$. We record a useful technical fact.

\begin{lemma}
Let $0<\nu\leq 1$ and let $\fa\subset \M(\H_\nu)$ be a proper weak-$*$ closed ideal. Let $\Theta:\bB_d\to\bB_d$ be a conformal automorphism. Put  $\fb=V_\Theta \fa V_\Theta^*$ and $\lambda= \Theta(0)$. Then,
\[
 \left(\bC (z_1-\lambda_1)+\ldots+\bC (z_d-\lambda_d)\right)\cap \fa=\{0\}
\]
if and only if
\[
 \left(\bC z_1+\ldots+\bC z_d\right)\cap \fb=\{0\}.
\]
\label{L:KSL2}
\end{lemma}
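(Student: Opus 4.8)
The plan is to unwind the definition of $\fb$ and reduce the statement to an elementary fact about how $\fa$ meets the space of linear polynomials vanishing at $\lambda$. Since $\fb=V_\Theta\fa V_\Theta^*$ and $V_\Theta M_\psi V_\Theta^*=M_{\psi\circ\Theta}$ for every $\psi\in\M(\H_\nu)$, a multiplier $\phi$ lies in $\fb$ if and only if $\phi\circ\Theta^{-1}\in\fa$. I would apply this with $\phi=\sum_{j=1}^d c_j z_j$ an arbitrary element of $\bC z_1+\cdots+\bC z_d$ and compute $\phi\circ\Theta^{-1}$ from the explicit description of the automorphisms of $\bB_d$ (a unitary composed with the involution interchanging $0$ and $\Theta^{-1}(0)$; see \cite[Section 2.2]{rudin2008}): inspection shows that each component of $\Theta^{-1}$ has the form $(\Theta^{-1})_j=\ell_j/h$, where $h:=1-\langle z,\lambda\rangle$ and $\ell_j$ is a polynomial of degree at most one, and $(\Theta^{-1})_j\in\A(\H_\nu)$ as already noted before the lemma. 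Because $\Theta^{-1}(\lambda)=\Theta^{-1}(\Theta(0))=0$ and $h(\lambda)=1-|\lambda|^2\neq 0$, each $\ell_j$ vanishes at $\lambda$, hence $\ell_j\in\bC(z_1-\lambda_1)+\cdots+\bC(z_d-\lambda_d)$; consequently $\phi\circ\Theta^{-1}=m_c/h$, where $m_c:=\sum_{j=1}^d c_j\ell_j$ again lies in $\bC(z_1-\lambda_1)+\cdots+\bC(z_d-\lambda_d)$.

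Next I would record that $c\mapsto m_c$ is a linear bijection from $\bC z_1+\cdots+\bC z_d$, identified with $\bC^d$ via $c\mapsto\sum_j c_j z_j$, onto $\bC(z_1-\lambda_1)+\cdots+\bC(z_d-\lambda_d)$: the functions $(\Theta^{-1})_1,\dots,(\Theta^{-1})_d$ are linearly independent, since a nontrivial relation $\sum_j c_j(\Theta^{-1})_j\equiv 0$ would mean the linear functional $w\mapsto\sum_j c_j w_j$ vanishes on $\Theta^{-1}(\bB_d)=\bB_d$, forcing $c=0$; hence $\ell_1,\dots,\ell_d$ are linearly independent too, and both spaces have dimension $d$. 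Granting this, the lemma reduces to the claim that, for $m$ ranging over $\bC(z_1-\lambda_1)+\cdots+\bC(z_d-\lambda_d)$,
\[
 m/h\in\fa\quad\Longleftrightarrow\quad m\in\fa .
\]
Indeed, $(\bC z_1+\cdots+\bC z_d)\cap\fb=\{0\}$ holds precisely when $m_c\notin\fa$ for every $c\neq 0$, which by the bijection is precisely the condition $(\bC(z_1-\lambda_1)+\cdots+\bC(z_d-\lambda_d))\cap\fa=\{0\}$.

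It remains to prove the displayed equivalence, and this is where $\lambda\in\bB_d$ enters. The implication ``$\Rightarrow$'' is immediate, since $h$ is a polynomial, hence a multiplier, so $m=h\cdot(m/h)$ stays in the ideal $\fa$. For ``$\Leftarrow$'', write $m=m_c$ and set $g_c:=m_c/h=\sum_{j=1}^d c_j(\Theta^{-1})_j$, which is a multiplier since each $(\Theta^{-1})_j$ is. From $m_c\in\fa=\ker\Gamma$ we get $m_c(Z_\fa)=0$, and the factorisation $m_c=h\,g_c$ in $\M(\H_\nu)$ gives $h(Z_\fa)\,g_c(Z_\fa)=0$, where $h(Z_\fa)=I-\sum_{j=1}^d\ol{\lambda_j}\fz_j=I-\Gamma(\langle z,\lambda\rangle)$. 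Because $\lambda\in\bB_d$, the spectrum of the multiplier $\langle z,\lambda\rangle$ inside the commutative Banach algebra $\A(\H_\nu)$ equals $\{\langle w,\lambda\rangle:w\in\ol{\bB_d}\}$, the closed disc of radius $|\lambda|<1$, by the description \eqref{Eq:charspaceAH} of $\Delta(\A(\H_\nu))$; since passing through the homomorphism $\Gamma$ can only shrink the spectrum, $\Gamma(\langle z,\lambda\rangle)$ has spectral radius at most $|\lambda|<1$, so $h(Z_\fa)=I-\Gamma(\langle z,\lambda\rangle)$ is invertible in $\M_\fa$. Cancelling it yields $g_c(Z_\fa)=0$, i.e.\ $g_c\in\fa$. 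The step I expect to require the most care is precisely this last one — verifying that $h(Z_\fa)$ is invertible, equivalently controlling the spectral radius of $\langle z,\lambda\rangle$ as a multiplier and transporting it to $\M_\fa$ — whereas everything else is routine bookkeeping with the explicit form of $\Theta^{-1}$.
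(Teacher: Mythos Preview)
Your proof is correct and follows essentially the same route as the paper: both arguments boil down to the observation that the components of a ball automorphism become degree-one polynomials after multiplying by the appropriate $1-\langle z,\mu\rangle$, so that membership in the ideal can be transported between $\fa$ and $\fb$ while staying inside the span of linear polynomials vanishing at the relevant base point. The paper phrases this by starting from $r\in\fa$, composing with $\Theta$, and then \emph{multiplying} by the polynomial denominator to land in $(\bC z_1+\cdots+\bC z_d)\cap\fb$ (declaring the converse similar), whereas you run it via $\Theta^{-1}$ and instead \emph{invert} $h=1-\langle z,\lambda\rangle$; your spectral-radius argument in fact shows $1/h\in\A(\H_\nu)$, so the passage through $\M_\fa$ is not strictly needed, but the substance is the same.
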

\begin{proof}
Let $r\in  \left(\bC (z_1-\lambda_1)+\ldots+\bC (z_d-\lambda_d)\right)\cap \fa$. Thus, there are $c_1,\ldots,c_d\in \bC$ such that $r=\sum_{j=1}^d c_j(z_j-\lambda_j)$. As before, we write $\Theta=(\theta_1,\ldots,\theta_d)$ with $\theta_1,\ldots,\theta_d\in \A(\H_\nu)$.
By definition of $\fb$, we see that
$\sum_{j=1}^dc_j(\theta_j-\lambda_j)=r\circ \Theta\in \fb.$
Now, by \cite[Theorem 2.2.5]{rudin2008}, there is a $\mu\in\bB_d$ such that the function $(1-\ip{z,\mu})\theta_j$ is a degree-one polynomial for every $1\leq j\leq d$. We infer that
\[
 (1-\ip{z,\mu}) (r\circ \Theta)=\sum_{j=1}^d c_j ((1-\langle z,\mu \rangle )\theta_j -\lambda_j (1-\langle z,\mu \rangle ))
\]
is a degree-one polynomial in $\fb$. On the other hand, because $r$ vanishes at $\lambda$, this degree-one polynomial must vanish at $0$, and thus must lie in $ \left(\bC z_1+\ldots+\bC z_d\right)\cap \fb.$  The converse can be proved similarly.
\end{proof}

Next, we need a standard generalization of the fact that a function whose derivative is injective at a point must itself be injective near that point. In what follows, given a holomorphic function $\phi:\bB_d\to \bC$, we view its derivative $\phi'(0)$ as a linear map from $\bC^d$ to $\bC$.

\begin{lemma}
Let $\A$ be an algebra of holomorphic functions on $\bB_d$. Let $\J\subset \A$ be an ideal such that $0\in\Z_{\bB_d}(\J)$ and
$
 \bigcap_{\phi\in \J} \ker \phi'(0)=\{0\}.
$
 Then,  $0$ is an isolated point of $\Z_{\bB_d}(\J)$.
	\label{L:KSL1}
\end{lemma}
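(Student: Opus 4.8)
The plan is to reduce the statement to the local injectivity of a holomorphic map with invertible differential. Since $\bigcap_{\phi\in\J}\ker\phi'(0)=\{0\}$, the first step is to extract finitely many elements of $\J$ whose differentials at $0$ already have trivial common kernel. Set $W_0=\bC^d$, and inductively suppose $\phi_1,\dots,\phi_k\in\J$ have been chosen so that $W_k:=\bigcap_{i=1}^k\ker\phi_i'(0)$ has dimension $d-k$. If $k<d$, pick a nonzero $v\in W_k$; by hypothesis there is $\phi_{k+1}\in\J$ with $\phi_{k+1}'(0)v\neq 0$, so that $W_{k+1}=W_k\cap\ker\phi_{k+1}'(0)$ is a proper subspace of $W_k$ and hence has dimension $d-k-1$. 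After $d$ steps we obtain $\phi_1,\dots,\phi_d\in\J$ with $W_d=\{0\}$; equivalently, the holomorphic map $F=(\phi_1,\dots,\phi_d)\colon\bB_d\to\bC^d$ has differential $F'(0)$ with trivial kernel, i.e.\ $F'(0)$ is invertible. Moreover $F(0)=0$ because $0\in\Z_{\bB_d}(\J)$.

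Next I would invoke the holomorphic (or even just smooth) inverse function theorem to produce an open neighbourhood $U\subset\bB_d$ of $0$ on which $F$ is injective. Since every point of $\Z_{\bB_d}(\J)$ is in particular a common zero of $\phi_1,\dots,\phi_d$, we get $\Z_{\bB_d}(\J)\cap U\subset F^{-1}(\{0\})\cap U=\{0\}$, the last equality holding because $F|_U$ is injective and $F(0)=0$. Therefore $0$ is an isolated point of $\Z_{\bB_d}(\J)$.

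I do not anticipate a serious obstacle; the one place demanding a little care is the dimension count in the first paragraph, where one must choose each new differential relative to a vector lying in the current intersection $W_k$ so as to guarantee that the dimension genuinely drops by one at each stage, ensuring that precisely $d$ functions suffice and that $F'(0)$ is an honest isomorphism of $\bC^d$ rather than a mere injection into a higher-dimensional space. The potential degenerate situations do not occur under the hypothesis: if every $\phi\in\J$ had $\phi'(0)=0$, or if $\J$ were empty, then $\bigcap_{\phi\in\J}\ker\phi'(0)$ would equal $\bC^d\neq\{0\}$ (for $d\geq 1$; the case $d=0$ being vacuous), contradicting the assumption.
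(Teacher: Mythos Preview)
Your proof is correct and reaches the same endpoint as the paper---namely, the local injectivity of a holomorphic map $\bB_d\to\bC^d$ with invertible differential at $0$, built from finitely many elements of $\J$---but you arrive there by a different and more elementary route. The paper invokes the Noetherian property of the ring of germs at $0$ to extract finitely many $\phi_1,\dots,\phi_m\in\J$ whose germs generate the local ideal, argues that these already satisfy $\bigcap_j\ker\phi_j'(0)=\{0\}$, and then passes to $d$ suitable linear combinations via a left inverse of the $m\times d$ derivative matrix. You bypass the germ-ring machinery entirely with a direct dimension-counting induction that produces exactly $d$ functions with jointly injective differential. Your argument is shorter and uses nothing beyond linear algebra and the inverse function theorem; the paper's approach, while heavier, has the minor conceptual benefit of showing that any finite set of local generators already witnesses the derivative condition.
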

\begin{proof}
	Let $\G$ denote the ring of germs of analytic functions at $0$. Given a function $g$ analytic on a neighborhood of $0$, we let $[g]$ denote the corresponding germ.
	Because $\G$ is Noetherian, the ideal $\ip{[\phi]:\phi\in\J}$ is generated by $[\phi_1],\ldots,[\phi_m]$ for some $\phi_1,\ldots,\phi_m\in \J$ \cite[Theorem II.E.3]{GunningRossi}.

Suppose $v\in\bC^d$ lies in $\bigcap_{j=1}^m \ker \phi_j'(0)$. Let $\phi \in\J$. Correspondingly, choose $g_1,\ldots,g_d$ analytic on a neighborhood of $0$ such that $[\phi]=\sum_{j=1}^m[g_j][\phi_j]$.
	As $0\in \Z_{\bB_d}(\J)$, we find $\phi_1(0)=\ldots=\phi_m(0)=0$ and thus
	\[
	\phi'(0)v = \sum_{j=1}^m g_j(0)\phi_j'(0)v=0. \]
	By assumption, we infer that $v=0$. If we let $D:\bC^d\to \bC^m$ be the matrix defined as
	$
	D=\begin{bmatrix} \phi_1'(0)\\ \phi_2'(0)\\ \vdots \\ \phi_m'(0) \end{bmatrix},
	$
	then we see that $D$ is injective. Hence, there is a $d\times m$ complex matrix $C$ such that $CD=I_d$. Let
	$
	\Psi=C \begin{bmatrix} \phi_1 \\ \phi_2\\\vdots\\\phi_m\end{bmatrix}.
	$
	If we write
	$
	\Psi=\begin{bmatrix} \psi_1 \\ \psi_2\\\vdots\\\psi_d\end{bmatrix}
	$
	then we have that $\psi_1,\ldots,\psi_d\in \J$ and 
	$
	\Psi'(0)=  CD=I_d.
	$
	Thus, $\Psi$ is injective near $0$, which implies that $0$ is an isolated point of $\Z_{\bB_d}(\psi_1,\ldots,\psi_d)$, and hence of $\Z_{\bB_d}(\J)$.\end{proof}

The crucial step of the argument can now be taken, following  \cite{kennedyshalit2015corr}.

\begin{theorem}
Let $0<\nu\leq 1$ and let $\fa\subset \M(\H_\nu)$ be a proper weak-$*$ closed ideal. Assume that there is a point $\lambda\in\Z_{\bB_d}(\fa)$ that is not an isolated point of $\Z_{\bB_d}(\fa)$. Let $q:\fT_\fa\to \O_\fa$ denote the quotient map. Then, 
 $q$ is not completely isometric on $\A_\fa$ under either of the following sets of conditions:
\begin{enumerate}[{\rm (i)}]

\item  $0<\nu<1$, or

\item $\nu=1$ and $\left(\bC (z_1-\lambda_1)+\ldots+\bC (z_d-\lambda_d)\right)\cap \fa=\{0\}$.
\end{enumerate}
In particular, the Gelfand transform of $\A_\fa$ is not completely isometric under these conditions.
	\label{T:KST}
\end{theorem}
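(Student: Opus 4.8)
The plan is to prove slightly more than claimed, namely that $q$ already fails to be \emph{isometric} on $\A_\fa$ under either hypothesis. By Theorem~\ref{T:bdrythm} this makes the identity representation of $\fT_\fa$ a boundary representation for $\A_\fa$, and then Theorem~\ref{T:unifquotientAa} gives that the Gelfand transform of $\A_\fa$ is not completely isometric. (Here $\H_\fa$ automatically has dimension greater than one: otherwise $\fa$ would be maximal by Lemma~\ref{L:properideal}(ii), so $\supp\fa$ and hence $\Z_{\bB_d}(\fa)$ would be a singleton, contradicting the existence of a non-isolated point.) By Theorems~\ref{T:specAaTa}(iv) and~\ref{T:Aaessnorm}, $\|q(\phi(Z_\fa))\|=\max\{|\phi(\zeta)|:\zeta\in\supp\fa\cap\bS_d\}$ for every $\phi\in\A(\H_\nu)$, and since this set lies in $\bS_d$ the maximum-modulus principle gives $\|q(\phi(Z_\fa))\|\le\sup_{\bB_d}|\phi|$; hence it suffices to exhibit a single $\phi\in\A(\H_\nu)$ with $\|\phi(Z_\fa)\|>\sup_{\bB_d}|\phi|$.

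First I would reduce to $\lambda=0$. Choose a conformal automorphism $\Theta$ of $\bB_d$ with $\Theta(0)=\lambda$ and set $\fb=V_\Theta\fa V_\Theta^*$ as in the paragraph preceding Lemma~\ref{L:KSL2}. The unitary $W=V_\Theta|_{\H_\fa}\colon\H_\fa\to\H_\fb$ carries $\A_\fa$ onto $\A_\fb$, hence $\fT_\fa=\rC^*(\A_\fa)$ onto $\fT_\fb$ and $\fK(\H_\fa)$ onto $\fK(\H_\fb)$, so $q$ is isometric on $\A_\fa$ if and only if the analogous map for $\fb$ is isometric on $\A_\fb$; moreover $\Z_{\bB_d}(\fb)=\Theta^{-1}(\Z_{\bB_d}(\fa))$, so $0$ is non-isolated in $\Z_{\bB_d}(\fb)$, and by Lemma~\ref{L:KSL2} hypothesis~(ii) becomes $(\bC z_1+\cdots+\bC z_d)\cap\fb=\{0\}$. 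Thus I may assume $\lambda=0$. Since $0$ is not isolated in $\Z_{\bB_d}(\fa)$, Lemma~\ref{L:KSL1}, applied with the algebra $\M(\H_\nu)$ and the ideal $\fa$, forces $\bigcap_{\phi\in\fa}\ker\phi'(0)\ne\{0\}$; picking a nonzero vector there and using the unitary invariance of $k^{(\nu)}$ to change coordinates, I may assume $\phi(0)=0$ and $\partial_1\phi(0)=0$ for every $\phi\in\fa$. Comparing Taylor coefficients at $0$ then shows $z_1$ is orthogonal to $\phi f$ for all $\phi\in\fa$, $f\in\H_\nu$ (the coefficient of $z_1$ in $\phi f$ equals $\partial_1\phi(0)f(0)+\phi(0)\partial_1 f(0)=0$), so $z_1\in\H_\fa$; likewise $1=k_0\in\H_\fa$.

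For case~(i), where $0<\nu<1$, this is already enough. Since $1$ is a unit vector of $\H_\fa$ and $\fz_1(1)=P_{\H_\fa}(z_1)=z_1$, we obtain $\|\fz_1\|\ge\|z_1\|_{\H_\nu}=a_1^{-1/2}=\nu^{-1/2}>1=\sup_{\bB_d}|z_1|$, so $q$ is not isometric on $\A_\fa$, as required.

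For case~(ii), where $\nu=1$, one has $a_1=1$ and the previous estimate only gives $\|\fz_1\|\ge 1$, so a finer argument is needed; here I would follow \cite{kennedyshalit2015corr}. The extra input is that $0$ is a genuine limit of zeros $\lambda^{(m)}\to 0$ in $\Z_{\bB_d}(\fa)$, whose reproducing kernels $k_{\lambda^{(m)}}\in\H_\fa$ are joint eigenvectors of $Z_\fa^*$ converging to $1$; combining these with $z_1\in\H_\fa$ and with the hypothesis $(\bC z_1+\cdots+\bC z_d)\cap\fa=\{0\}$ — which prevents the linear part of $\fa$ from degenerating the relevant direction — one builds a finite-dimensional co-invariant subspace of $\H_\fa$ on which $Z_\fa$ compresses to a nilpotent tuple, and a polynomial $p$ (a normalized combination of a coordinate with a Cauchy kernel $k_\mu$, $\mu\in\bB_d$) with $\|p(Z_\fa)\|>\sup_{\bB_d}|p|$. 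The hard part will be precisely this construction: a na\"ive compression to $\spn\{1,z_1\}$ never suffices, because the Schwarz--Pick inequality on $\bB_d$ caps the relevant $2\times 2$ operator norm by the supremum of $|p|$ over $\bS_d$, so the limit point must be exploited in an essential way — ultimately bringing into play the fact that on the Drury--Arveson space the multiplier norm strictly dominates the supremum norm over the ball, the same phenomenon responsible for the case $\fa=\{0\}$. Once $q$ is shown not to be isometric (hence not completely isometric) on $\A_\fa$ in either case, Theorems~\ref{T:bdrythm} and~\ref{T:unifquotientAa} complete the proof.
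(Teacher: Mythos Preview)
Your reduction to $\lambda=0$, the use of Lemma~\ref{L:KSL1} to arrange $e_1\in\bigcap_{\phi\in\fa}\ker\phi'(0)$, the verification that $1,z_1\in\H_\fa$, and the entire argument in case~(i) are all correct and essentially identical to the paper's proof.

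The genuine gap is in case~(ii). You commit to proving that $q$ fails to be \emph{isometric} (at the scalar level) and then do not actually produce the required multiplier; the paragraph about ``finite-dimensional co-invariant subspaces'', ``nilpotent tuples'', and ``normalized combinations of a coordinate with a Cauchy kernel'' is speculation, not an argument, and your remark that ``the hard part will be precisely this construction'' concedes as much. More importantly, the plan to stay at the scalar level is the wrong target: the paper does \emph{not} show that $q$ fails to be isometric in case~(ii), only that it fails to be \emph{completely} isometric, and the proof is very short. Using the hypothesis $(\bC z_1+\cdots+\bC z_d)\cap\fa=\{0\}$ together with Theorem~\ref{T:lattice}, one sees that $\fz_j 1=P_{\H_\fa}z_j\neq 0$ for every $j$. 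Hence the \emph{column} $C=(\fz_1,\ldots,\fz_d)^T\in\bM_{d,1}(\A_\fa)$ satisfies
\[
\|C\|^2\ge\|C\,1\|^2=\sum_{j=1}^d\|\fz_j 1\|^2=\|z_1\|^2+\sum_{j=2}^d\|\fz_j 1\|^2=1+\sum_{j=2}^d\|\fz_j 1\|^2>1,
\]
while Theorems~\ref{T:specAaTa} and~\ref{T:Aaessnorm} give $\|q^{(d)}(C)\|\le 1$ since $\supp\fa\cap\bS_d\subset\bS_d$. (Note that the non-isolated-zero hypothesis forces $d\ge 2$, so the last sum is genuinely positive.) This is exactly the ``column'' counterpart of the row estimate you already know from Lemma~\ref{L:irred}, and it is the missing idea: rather than hunting for a single scalar multiplier whose compression norm exceeds its supremum norm, you pass to a $d\times 1$ matrix and get the inequality for free from $1,z_1\in\H_\fa$ and $\fz_j 1\neq 0$.
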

\begin{proof}
Because $\lambda$ is not an isolated point of $\Z_{\bB_d}(\fa)$, there are infinitely many points in $\Z_{\bB_d}(\fa)$. In particular, the infinite set $\{k_\mu:\mu\in \Z_{\bB_d}(\fa)\}$ is linearly independent in $\H_\fa$, so that $\H_\fa$ is infinite-dimensional.
Therefore, we focus on showing that $q$ is not completely isometric on $\A_\fa$, since the statement about the Gelfand transform follows immediately from this, by virtue of Theorems \ref{T:bdrythm} and \ref{T:unifquotientAa}. 

We start with a reduction. Let $\Theta:\bB_d\to\bB_d$ be a conformal automorphism such that $\Theta(0)=\lambda$. Let $\fb=V_\Theta \fa V_\Theta^*$. We observed in the discussion preceding Lemma \ref{L:KSL2} that $\A_\fa$ and $\A_\fb$ are unitarily equivalent, so it is equivalent to show that the quotient map $\fT_\fb\to \O_\fb$ is not completely isometric on $\A_\fb$. It follows from Lemma \ref{L:KSL2} that we may as well assume that $\lambda=0$.

We make one more reduction before starting the proof. Invoke Lemma \ref{L:KSL1} to infer the existence of a unit vector $v\in \bigcap_{\phi\in \fa}\ker \phi'(0)$. Choose a constant unitary operator $U:\bC^d\to\bC^d$ such that $Ue_1=v$, where $e_1=(1,0,\ldots,0)\in \bC^d$. Let $\Omega:\bB_d\to\bB_d$ be the conformal automorphism defined as $\Omega(z)=Uz$. Let $\fc=V_\Omega \fa V_\Omega^*$. Then, we see that $e_1\in \bigcap_{\phi\in \fc}\ker \phi'(0)$. Arguing as in the previous paragraph, we can thus assume that this last statement is true of the ideal $\fa$ itself.

Having made these reductions, we start the argument. We note first that because $0\in \Z_{\bB_d}(\fa)$ and $e_1\in \bigcap_{\phi\in \fa}\ker \phi'(0)$, we have $z_1\in [\fa \H_\nu]^\perp =\H_\fa$. Also, it follows from \cite[Remark 7.1(iv)]{hartz2017isom} that $\|z_1\|^2=1/\nu$.
Next, because $0\in \Z_{\bB_d}(\fa)$ we see that $1\in \H_\fa$ so that
	\[ \|\fz_1\|^2\geq \|\fz_1 1\|^2= \|z_1\|^2=1/\nu \]
where we recall that we use the notation $Z_\fa=(\fz_1,\ldots,\fz_d)$. 

In case (i), we see that $\|\fz_1\|>1$. On the other hand, it readily follows from Theorems \ref{T:specAaTa} and \ref{T:Aaessnorm} that $\|q(\fz_1)\|\leq 1$, so that $q$ is not even isometric on $\A_\fa$ in this case. 

For the rest of the proof, we focus on case (ii), and so we assume that $\nu=1$ and that $\left(\bC z_1+\ldots+\bC z_d\right)\cap \fa=\{0\}$. The argument here is identical to that of \cite{kennedyshalit2015corr}. The fact that $z_2,\ldots,z_d\nin \fa$ implies that $\fz_2 1,\ldots,\fz_d 1$ are all non-zero vectors in $\H_\fa$ by Theorem \ref{T:lattice}. Let $C$ be the column operator $(\fz_1,\ldots,\fz_d)$.
	Then
	\[ \|C\|^2\geq \sum_{j=1}^d\|\fz_j1\|^2 =1/\nu+\sum_{j=2}^d \|\fz_j 1\|^2>1/\nu\geq 1. \]
	Once again, Theorems \ref{T:specAaTa} and \ref{T:Aaessnorm} show that the essential norm of $C$ is at most $1$, so indeed $q|_{\A_\fa}$ is not completely isometric in this case as well.
\end{proof}

We can now refine Corollary \ref{C:idbdryessnorm} in some special cases.

\begin{corollary}\label{C:KSMainC}
    Let $0<\nu\leq 1$ and let $\fa\subset \M(\H_\nu)$ be a proper weak-$*$ closed ideal. Assume that there is a point $\lambda\in\Z_{\bB_d}(\fa)$ that is not an isolated point of $\Z_{\bB_d}(\fa)$. Assume also that either 
\begin{enumerate}[{\rm (i)}]

\item  $0<\nu<1$, or

\item $\nu=1$ and $\left(\bC (z_1-\lambda_1)+\ldots+\bC (z_d-\lambda_d)\right)\cap \fa=\{0\}$.
\end{enumerate} 
	Then,
	$\A_\fa$ is hyperrigid in $\fT_\fa$ if and only if $Z_\fa$ is essentially normal.
\end{corollary}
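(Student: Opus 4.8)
The plan is to obtain this corollary as the clean synthesis of Corollary \ref{C:idbdryessnorm} and Theorem \ref{T:KST}. First I would record a preliminary observation: since $\lambda$ is not an isolated point of $\Z_{\bB_d}(\fa)$, the set $\{k_\mu:\mu\in\Z_{\bB_d}(\fa)\}$ is infinite and linearly independent in $\H_\fa$, so $\H_\fa$ is infinite-dimensional; in particular it has dimension greater than one, and there is nothing degenerate to worry about when invoking Corollary \ref{C:idbdryessnorm}.

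The easy implication requires none of the hypotheses (i) or (ii): if $\A_\fa$ is hyperrigid in $\fT_\fa$, then by Corollary \ref{C:idbdryessnorm} the $d$-tuple $Z_\fa$ is essentially normal (and, although we do not need it here, the identity representation of $\fT_\fa$ is a boundary representation for $\A_\fa$). For the converse, assume $Z_\fa$ is essentially normal. The hypothesis on $\lambda$ together with (i) or (ii) is precisely what Theorem \ref{T:KST} requires, so the quotient map $q:\fT_\fa\to\O_\fa$ is not completely isometric on $\A_\fa$. Recall that $\fT_\fa=\rC^*(\A_\fa)$ contains the ideal $\fK(\H_\fa)$ of compact operators by Lemma \ref{L:irred}(iii), so Arveson's boundary theorem (Theorem \ref{T:bdrythm}) applies and shows that the identity representation of $\fT_\fa$ is a boundary representation for $\A_\fa$. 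Both conditions of Corollary \ref{C:idbdryessnorm}(ii) are now verified, and that corollary yields that $\A_\fa$ is hyperrigid in $\fT_\fa$, completing the proof.

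I do not expect a genuine obstacle in this argument: the entire substantive content has already been absorbed into Theorem \ref{T:KST} (whose proof carries out the conformal-automorphism reductions and the norm estimate $\|z_1\|^2=1/\nu$) and into Corollary \ref{C:idbdryessnorm}. The only point demanding a moment's care is making sure the hypotheses for the boundary theorem and for Corollary \ref{C:idbdryessnorm} are in force — namely that $\fT_\fa\supset\fK(\H_\fa)$ and that $\H_\fa$ is not one-dimensional — both of which are immediate from the remarks above. This mirrors the way \cite{kennedyshalit2015corr} sharpens the Drury--Arveson case of the hyperrigidity criterion.
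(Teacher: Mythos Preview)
Your proposal is correct and follows exactly the approach of the paper, which simply reads ``Combine Theorem \ref{T:KST} with Theorem \ref{T:bdrythm} and Corollary \ref{C:idbdryessnorm}.'' You have merely spelled out in detail how these three results fit together, including the (harmless) observation that $\H_\fa$ is infinite-dimensional; nothing is missing or different.
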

\begin{proof}
Combine Theorem \ref{T:KST} with Theorem \ref{T:bdrythm} and Corollary \ref{C:idbdryessnorm}.
\end{proof}


\section{Isolated points in the character space}\label{S:isol}

In this final section, much in the spirit of Section \ref{S:polyapprox}, we seek concrete conditions that detect whether or not the Gelfand transform is completely isometric. More precisely, we aim to leverage the presence of isolated points in the character space. We start with the norm-closed algebra $\A_\fa$, where the situation is more transparent.

\begin{theorem}\label{T:shilovisolAa}
Let $\H$ be a  regular unitarily invariant complete Nevanlinna--Pick space. Let $\fa\subset \M(\H)$ be a proper weak-$*$ closed ideal. Assume that $\widehat{\supp \fa}$ consists of more than one point and has an isolated point $\lambda\in \bB_d$.
Then, the Gelfand transform of $\A_\fa$ is not isometric.
\end{theorem}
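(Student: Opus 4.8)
The plan is to manufacture a non-trivial idempotent in $\A_\fa$ attached to the isolated character, and then to contradict the maximum modulus principle of Theorem \ref{T:maximummodulus}.

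\emph{Producing the idempotent.} By Theorem \ref{T:specAaTa}(i) we have $\Delta(\A_\fa)=\{\tau_\mu:\mu\in\widehat{\supp\fa}\}$, and the map $\mu\mapsto\tau_\mu$ is a homeomorphism of $\widehat{\supp\fa}$ onto $\Delta(\A_\fa)$: it is continuous since the polynomials are dense in $\A(\H)$ and extend continuously to $\ol{\bB_d}$, injective since $\mu=(\tau_\mu(\fz_1),\dots,\tau_\mu(\fz_d))$, and both spaces are compact Hausdorff. Hence $\tau_\lambda$ is an isolated point of $\Delta(\A_\fa)$, so $\{\tau_\lambda\}$ is a clopen subset of the character space. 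Applying the Shilov idempotent theorem to the commutative unital Banach algebra $\A_\fa$, we obtain an idempotent $e\in\A_\fa$ whose Gelfand transform is the indicator of $\{\tau_\lambda\}$: that is, $\tau_\lambda(e)=1$ and $\tau_\mu(e)=0$ for every $\mu\in\widehat{\supp\fa}\setminus\{\lambda\}$. Since $\widehat{\supp\fa}$ contains a point other than $\lambda$, we get $e\neq I_{\H_\fa}$, and clearly $e\neq 0$.

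\emph{Locating $\lambda$.} Because $\lambda$ is an isolated point of the compact set $\widehat{\supp\fa}$, it is a peak point for the uniform algebra of polynomials on $\widehat{\supp\fa}$, hence it belongs to the Shilov boundary of that algebra; the latter is contained in $\supp\fa$ (every polynomial, and hence every element of the algebra, attains its sup norm on the compact subset $\supp\fa$), so $\lambda\in\supp\fa$. Equivalently, this is Rossi's local maximum modulus principle: a point of $\widehat{\supp\fa}\setminus\supp\fa$ cannot be isolated in $\widehat{\supp\fa}$. Since $\lambda\in\bB_d$, we conclude $\lambda\in\supp\fa\cap\bB_d=\Z_{\bB_d}(\fa)$ by \cite[Theorem 3.4]{CT2019spectrum}. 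Consequently $k_\lambda\in\H_\fa$, and the vector functional $T\mapsto\langle Tk_\lambda,k_\lambda\rangle_{\H_\fa}/\|k_\lambda\|^2$ restricts to a character $\chi_\lambda$ of $\M_\fa$ with $\chi_\lambda(\psi(Z_\fa))=\psi(\lambda)$ for every $\psi\in\M(\H)$; by density of the polynomials, $\chi_\lambda|_{\A_\fa}=\tau_\lambda$.

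\emph{Contradiction.} Suppose the Gelfand transform $g$ of $\A_\fa$ is isometric. Write $e=\psi(Z_\fa)$ with $\psi\in\M(\H)$, which is possible since $\A_\fa\subseteq\M_\fa$. Then $\|e\|=\max_{\mu\in\widehat{\supp\fa}}|\tau_\mu(e)|=1$, while $|\psi(\lambda)|=|\chi_\lambda(e)|=|\tau_\lambda(e)|=1$. Thus $e$ meets the hypotheses of Theorem \ref{T:maximummodulus} with $n=1$ at the zero $\lambda\in\Z_{\bB_d}(\fa)$, so there are unimodular scalars $v,w$ with $v\,\psi(Z_\fa)=\|e\|\,w\,I_{\H_\fa}$, i.e.\ $e=(w\overline v)\,I_{\H_\fa}$ is a unimodular scalar multiple of the identity. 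Since $e^2=e\neq 0$, the scalar must equal $1$, whence $e=I_{\H_\fa}$, contradicting $e\neq I_{\H_\fa}$. Therefore $g$ is not isometric.

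I expect the genuine content to be concentrated in the second step: pinning down that an isolated point of the polynomially convex hull $\widehat{\supp\fa}$ actually lies in $\supp\fa$. This is precisely where the hypothesis $\lambda\in\bB_d$ and the identification $\supp\fa\cap\bB_d=\Z_{\bB_d}(\fa)$ are essential, since without $\lambda\in\Z_{\bB_d}(\fa)$ one cannot apply the maximum modulus principle. The remaining steps are a routine blend of the Shilov idempotent theorem with results already established in the paper.
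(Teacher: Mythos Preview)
Your proof is correct and follows essentially the same strategy as the paper: Shilov idempotent theorem plus the maximum modulus principle of Theorem~\ref{T:maximummodulus}. There are two minor differences worth noting. First, you carefully justify that $\lambda\in\Z_{\bB_d}(\fa)$ via the Shilov boundary of $P(\widehat{\supp\fa})$; the paper applies Theorem~\ref{T:maximummodulus} without explicitly verifying this hypothesis, so your ``second step'' supplies a detail that the paper leaves to the reader. Second, the paper approximates the Shilov idempotent $b$ by some $\theta\in\A(\H)$ before invoking Theorem~\ref{T:maximummodulus}, whereas you work directly with the idempotent $e$, using that $\A_\fa\subset\M_\fa$ so $e=\psi(Z_\fa)$ for some $\psi\in\M(\H)$; your route is slightly cleaner and avoids the approximation step entirely.
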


\begin{proof}
By Theorem \ref{T:specAaTa}(i), there is a unique character $\tau_\lambda\in \Delta(\A_\fa)$ such that $\tau_\lambda(Z_\fa)=\lambda$, and this character is easily seen to be an isolated point of $\Delta(\A_\fa)$. 
By the Shilov idempotent theorem \cite[Corollary III.6.5]{gamelin1969}, there is an idempotent element $b\in \A_\fa$ such that $\tau_\lambda(b)=1$ and $\gamma(b)=0$ for every $\gamma\in \Delta(\A_\fa)\setminus\{\tau_\lambda\}$. An approximation argument then yields a multiplier $\theta\in \A(\H)$ such that  $|\theta(\lambda)|>1/2$ and $|\gamma(\theta(Z_\fa))|<1/2$ for every $\gamma\in \Delta(\A_\fa)\setminus\{\tau_\lambda\}$. Since $\Delta(\A_\fa)$ is not a mere singleton, we infer that $\theta(Z_\fa)$ is not a constant multiple of the identity, so $\|\theta(Z_\fa)\|>|\theta(\lambda)|$ by Theorem \ref{T:maximummodulus}. Hence,
\[
 \|\theta(Z_\fa)\|>\max\{|\chi(\theta(Z_\fa))|:\chi\in \Delta(\A_\fa) \}
\]
and the Gelfand transform of $\A_\fa$ is not isometric.
\end{proof}

Before giving a sufficient condition in order for the previous result to be applicable, we need a preliminary observation. Recall that a sequence $\Lambda\subset \bB_d$ is said to be \emph{interpolating} for $\M(\H)$ if the restriction map $\rho:\M(\H)\to \ell^\infty(\Lambda)$ defined as
\[
\rho(\psi)=\psi|_\Lambda, \quad \psi\in\M(\H)
\]
is surjective. These sequences have recently been characterized in \cite{AHMR2019interp}. We will say that a subset $\Omega\subset \ol{\bB_d}$ is a \emph{zero set} for $\A(\H)$ if there is a closed ideal $\J\subset \A(\H)$ such that $\Omega=\Z_{\ol{\bB_d}}(\J)$.

\begin{lemma}\label{L:interppolyconvex}
Let $\H$ be the Drury--Arveson space on $\bB_d$ and let $\Lambda\subset \bB_d$ be an interpolating sequence for $\M(\H)$. Assume that $\ol{\Lambda}$ is a zero set for $\A(\H)$. 
Let 
$
\fa=\{\psi\in \M(\H):\psi|_\Lambda=0\}.
$
Then, $\supp\fa=\ol{\Lambda}$ and thus $\supp \fa$ is polynomially convex.
\end{lemma}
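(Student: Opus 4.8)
The plan is to leverage the hypothesis that $\Lambda$ is interpolating in order to identify $\M_\fa$, as a topological algebra, with $\ell^\infty(\Lambda)$, whose character space is completely transparent, and then to read off both the support and its polynomial convexity. First I would record that $\fa=\{\psi\in\M(\H):\psi|_\Lambda=0\}$ is a proper weak-$*$ closed ideal: each point evaluation at a point of $\bB_d$ is a weak-$*$ continuous functional on $\M(\H)$, so $\fa$ is weak-$*$ closed; it is visibly an ideal; and it is proper because $1\notin\fa$. The restriction homomorphism $\rho:\M(\H)\to\ell^\infty(\Lambda)$, $\rho(\psi)=\psi|_\Lambda$, is contractive by the case $n=1$ of Inequality~\eqref{Eq:supnorm}, is unital, and is surjective by the very definition of an interpolating sequence, with $\ker\rho=\fa$. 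By the bounded inverse theorem the induced map $\M(\H)/\fa\to\ell^\infty(\Lambda)$ is therefore a topological algebra isomorphism; composing it with the isomorphism $\Gamma_\fa:\M(\H)/\fa\to\M_\fa$ supplied by \cite[Theorem 2.3]{CT2019spectrum}, I obtain a topological algebra isomorphism $\Psi:\M_\fa\to\ell^\infty(\Lambda)$ satisfying $\Psi(\psi(Z_\fa))=\psi|_\Lambda$ for all $\psi\in\M(\H)$; in particular $\Psi(\fz_j)=(\lambda_j)_{\lambda\in\Lambda}$ for $1\le j\le d$.

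Next I would compute $\supp\fa=\{(\chi(\fz_1),\dots,\chi(\fz_d)):\chi\in\Delta(\M_\fa)\}$. Since $\Delta(\M_\fa)$ is weak-$*$ compact and $\chi\mapsto(\chi(\fz_1),\dots,\chi(\fz_d))$ is continuous, $\supp\fa$ is a compact, hence closed, subset of $\bC^d$. For each $\lambda\in\Lambda$ the point evaluation $\delta_\lambda$ is a character of $\ell^\infty(\Lambda)$, so $\delta_\lambda\circ\Psi\in\Delta(\M_\fa)$ and $(\delta_\lambda\circ\Psi)(\fz_j)=(\Psi\fz_j)(\lambda)=\lambda_j$; thus $\Lambda\subset\supp\fa$, and therefore $\ol\Lambda\subset\supp\fa$. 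Conversely, let $\chi\in\Delta(\M_\fa)$. Then $\chi\circ\Psi^{-1}$ is a character of $\ell^\infty(\Lambda)$, and since the Gelfand space of $\ell^\infty(\Lambda)$ is the Stone--\v{C}ech compactification $\beta\Lambda$ --- in which $\Lambda$, identified with the point evaluations, is dense --- there is a net $(\mu^{(\alpha)})$ in $\Lambda$ with $f(\mu^{(\alpha)})\to(\chi\circ\Psi^{-1})(f)$ for every $f\in\ell^\infty(\Lambda)$. Taking $f=\Psi(\fz_j)$ gives $\mu^{(\alpha)}_j\to\chi(\fz_j)$ for each $j$, so $\mu^{(\alpha)}\to(\chi(\fz_1),\dots,\chi(\fz_d))$ in $\bC^d$, and this limit lies in $\ol\Lambda$. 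Hence $\supp\fa=\ol\Lambda$.

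It then remains to see that $\ol\Lambda$ is polynomially convex, and here I would invoke the standing hypothesis that $\ol\Lambda=\Z_{\ol{\bB_d}}(\J)$ for some closed ideal $\J\subset\A(\H)$, together with the general fact that the zero set of any ideal of $\A(\H)$ is polynomially convex. To see the latter, suppose $\lambda\in\widehat{\Z_{\ol{\bB_d}}(\J)}$ and $\phi\in\J$; choosing polynomials $p_n\to\phi$ in $\A(\H)$, we have $p_n\to\phi$ uniformly on $\ol{\bB_d}$ by Inequality~\eqref{Eq:supnorm}, whence $|p_n(\lambda)|\le\max_{w\in\Z_{\ol{\bB_d}}(\J)}|p_n(w)|$, and letting $n\to\infty$ yields $|\phi(\lambda)|\le\max_{w\in\Z_{\ol{\bB_d}}(\J)}|\phi(w)|=0$; thus $\lambda\in\Z_{\ol{\bB_d}}(\J)$. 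Consequently $\supp\fa=\ol\Lambda$ is polynomially convex.

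The only point demanding genuine care is the bookkeeping in the first two steps: one must confirm that the chain of identifications really carries $\fz_j$ to the coordinate sequence $(\lambda_j)_{\lambda\in\Lambda}$, and observe that a merely topological (non-isometric) algebra isomorphism still induces a homeomorphism of character spaces --- which is automatic, since characters of a commutative Banach algebra are automatically continuous. Everything else (compactness of $\supp\fa$, the structure of $\Delta(\ell^\infty(\Lambda))$, the net argument, and the polynomial-convexity computation) is routine.
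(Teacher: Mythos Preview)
Your argument is correct. It differs from the paper's proof in the key step of identifying $\supp\fa$ with $\ol\Lambda$: the paper rewrites $\supp\fa$ as the set of $\lambda$ for which $(\fz_1-\lambda_1 I)\M_\fa+\cdots+(\fz_d-\lambda_d I)\M_\fa\neq\M_\fa$ and then invokes \cite[Corollary~4.7]{CT2019interp} as a black box, whereas you build the topological isomorphism $\M_\fa\cong\ell^\infty(\Lambda)$ directly from the interpolating hypothesis and read off the support from the well-known fact that $\Delta(\ell^\infty(\Lambda))=\beta\Lambda$ with $\Lambda$ dense. Your route is more self-contained and makes the role of the interpolation hypothesis completely transparent; the paper's route is shorter on the page but hides the work in the cited reference. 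For polynomial convexity both proofs appeal to the zero-set hypothesis; the paper merely asserts that zero sets for $\A(\H)$ are automatically polynomially convex, while you supply the short approximation argument explicitly.
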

\begin{proof}
We know that
\[
\supp \fa=\{\chi(\fz_1,\ldots,\fz_d):\chi\in \Delta(\M_\fa)\}
\]
so that $\supp \fa$ consists of those points $\lambda=(\lambda_1,\ldots,\lambda_d)\in \bC^d$ such that
\[
(\fz_1-\lambda_1 I)\M_\fa+\ldots+(\fz_d-\lambda_d I)\M_\fa\neq \M_\fa.
\]
It then readily follows from \cite[Corollary 4.7]{CT2019interp} that $\supp\fa=\ol{\Lambda}$. Thus, $\supp \fa$ is a zero set for $\A(\H)$, and hence is automatically polynomially convex.
\end{proof}

We can now give an application of Theorem \ref{T:shilovisolAa}.
Recall that a closed subset $K\subset \bS_d$ is said to be \emph{$\A_d$-totally null} if $|\mu|(K)=0$ for every $\A_d$-Henkin measure $\mu$ on $\bS_d$ (see \cite{CD2016duality} for details).

\begin{corollary}\label{C:isolpointDA}
Let $\H$ be the Drury--Arveson space on $\bB_d$ and let $\Lambda\subset \bB_d$ be an interpolating sequence for $\M(\H)$ consisting of more than one point. Assume that $\ol{\Lambda}\cap \bS_d$ is $\A_d$-totally null. Let 
$
\fa=\{\psi\in \M(\H):\psi|_\Lambda=0\}.
$
Then, the Gelfand transform of $\A_\fa$ is not  isometric.
\end{corollary}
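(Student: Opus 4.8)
The plan is to obtain the statement from Theorem~\ref{T:shilovisolAa}, applied to the ideal $\fa=\{\psi\in\M(\H):\psi|_\Lambda=0\}$, with Lemma~\ref{L:interppolyconvex} supplying the needed information about $\supp\fa$. Thus I would proceed in three steps. First, verify that $\ol{\Lambda}$ is a zero set for $\A(\H)$; Lemma~\ref{L:interppolyconvex} then yields $\supp\fa=\ol{\Lambda}$ together with the polynomial convexity of $\supp\fa$, so that $\widehat{\supp\fa}=\ol{\Lambda}$. Second, check that $\ol{\Lambda}$ consists of more than one point and has an isolated point lying in $\bB_d$. Third, invoke Theorem~\ref{T:shilovisolAa}.

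The second step is routine. Since $\Lambda$ is interpolating for $\M(\H)$, interpolating the indicator functions of its points forces $\Lambda$ to be separated in the hyperbolic metric, and in particular $\Lambda$ has no accumulation point inside $\bB_d$. Hence each point of $\Lambda$ has a Euclidean neighbourhood meeting $\ol{\Lambda}$ in that point alone, i.e.\ is isolated in $\ol{\Lambda}$; as $\Lambda\subset\bB_d$ contains more than one point by hypothesis, so does $\ol{\Lambda}$, and any $\lambda\in\Lambda$ is an isolated point of $\ol{\Lambda}=\widehat{\supp\fa}$ lying in $\bB_d$. With such a $\lambda$ fixed, Theorem~\ref{T:shilovisolAa} does the rest: the Shilov idempotent theorem produces a multiplier $\theta\in\A(\H)$ that is large at $\lambda$ and small at every other character of $\A_\fa$, and since $\Delta(\A_\fa)$ is not a singleton, Theorem~\ref{T:maximummodulus} forces $\|\theta(Z_\fa)\|>\max\{|\chi(\theta(Z_\fa))|:\chi\in\Delta(\A_\fa)\}$, so that the Gelfand transform of $\A_\fa$ fails to be isometric.

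The crux, and the main obstacle, is the first step: proving that $\ol{\Lambda}$ is a zero set for $\A(\H)$. Write $E=\ol{\Lambda}\cap\bS_d$, so that $\ol{\Lambda}=\Lambda\sqcup E$. Since $E$ is $\A_d$-totally null, the peak interpolation theory of \cite{CD2016duality} furnishes a function $g\in\A(\H)$ whose zero set in $\ol{\bB_d}$ is exactly $E$ (one may take $g=1-u$ for $u$ a peak function of $E$). The totally null condition on $E$ also upgrades, via \cite{CT2019interp}, the interpolation property of $\Lambda$ from $\M(\H)$ to $\A(\H)$ itself. With these two facts in hand I would show that the closed ideal $\J=\{\phi\in\A(\H):\phi|_{\ol{\Lambda}}=0\}$ satisfies $\Z_{\ol{\bB_d}}(\J)=\ol{\Lambda}$: for $w\in\bB_d\setminus\Lambda$ one interpolates within $\A(\H)$ on the still $\A(\H)$-interpolating sequence $\Lambda\cup\{w\}$ to produce $\phi\in\J$ with $\phi(w)\neq0$, while for $w\in\bS_d\setminus E$ one combines $g$, which is nonzero at $w$, with elements of $\A(\H)$ vanishing on $\Lambda$. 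The delicate point is precisely that all of these functions must be produced inside $\A(\H)$ rather than merely inside $\M(\H)$, and this is exactly what the $\A_d$-totally null hypothesis on $E$ secures through \cite{CD2016duality,CT2019interp}. Once this is in place, Lemma~\ref{L:interppolyconvex} and Theorem~\ref{T:shilovisolAa} complete the argument.
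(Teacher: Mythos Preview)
Your overall strategy matches the paper's proof exactly: verify that $\ol{\Lambda}$ is a zero set for $\A(\H)$, invoke Lemma~\ref{L:interppolyconvex} to get $\widehat{\supp\fa}=\supp\fa=\ol{\Lambda}$, observe that an interpolating sequence has no accumulation point in $\bB_d$ so every point of $\Lambda$ is isolated in $\ol{\Lambda}$, and then apply Theorem~\ref{T:shilovisolAa}.

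The only difference is in the first step. The paper dispatches it in one line by citing \cite[Corollary 5.13]{CD2018ideals}, which says precisely that $\ol{\Lambda}$ is a zero set for $\A(\H)$ under the present hypotheses. You instead attempt to reconstruct this fact by hand from the peak-interpolation machinery of \cite{CD2016duality} and the $\A(\H)$-interpolation results of \cite{CT2019interp}. That is morally the content of the cited result, but your sketch for the boundary case $w\in\bS_d\setminus E$ is incomplete: saying you will ``combine $g$ with elements of $\A(\H)$ vanishing on $\Lambda$'' does not explain how to ensure the product (or difference) is nonzero at the boundary point $w$, since interpolation on $\Lambda\cup\{w\}$ is not immediately available when $w\in\bS_d$. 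This gap can be closed, but the clean route is simply to cite \cite[Corollary~5.13]{CD2018ideals}, as the paper does.
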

\begin{proof}
It follows from \cite[Corollary 5.13]{CD2018ideals} that $\ol{\Lambda}$ is a zero set for $\A(\H)$. Apply Lemma \ref{L:interppolyconvex} to see that $\widehat{\supp \fa}=\supp \fa=\ol{\Lambda}$. The proof is complete upon applying Theorem \ref{T:shilovisolAa}, as an interpolating sequence cannot have an accumulation in $\bB_d$.
\end{proof}

For the remainder of the section we will turn out attention to $\M_\fa$. To begin we collect some relevant information about the character space of $\M_\fa$, which is typically more complicated than that of $\A_\fa$.

\begin{lemma}\label{L:isolchar}
 Let $\H$ be a  regular unitarily invariant complete Nevanlinna--Pick space. Let $\fa\subset \M(\H)$ be a proper weak-$*$ closed ideal and let $\lambda\in \Z_{\bB_d}(\fa)$. The following statements hold.
 \begin{enumerate}[{\rm(i)}]

  \item  There is a unique character $\tau_\lambda\in \Delta(\M_\fa)$ such that 
  \[
 \lambda=(\tau_\lambda(\fz_1),\ldots,\tau_\lambda(\fz_d)).
\]
Furthermore, this character is weak-$*$ continuous and satisfies
\[
 \tau_\lambda(\psi(Z_\fa))=\psi(\lambda), \quad \psi\in \M(\H).
\]

\item The point $\lambda$ is an isolated point of $\Z_{\bB_d}(\fa)$ if and only if $\tau_\lambda$ is an isolated point of $\Delta(\M_\fa)$.
 \end{enumerate}
\end{lemma}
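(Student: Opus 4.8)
The plan is to produce the explicit character $\tau_\lambda$ in part~(i) as a vector state on $B(\H_\fa)$, and then to deduce part~(ii) from a homeomorphism between $\Z_{\bB_d}(\fa)$ and the portion of $\Delta(\M_\fa)$ lying over the open ball. For part~(i), I would first note that $\lambda\in\Z_{\bB_d}(\fa)$ forces $k_\lambda\in\H_\fa$: for $\phi\in\fa$ and $f\in\H$ one has $\ip{M_\phi f,k_\lambda}=\phi(\lambda)f(\lambda)=0$, so $k_\lambda\perp[\fa\H]$, and $k_\lambda\neq 0$ since $k_\lambda(0)=k(0,\lambda)=1$. Because $\H_\fa$ is co-invariant for $\M(\H)$ and $M_\psi^*k_\lambda=\ol{\psi(\lambda)}\,k_\lambda\in\H_\fa$ for every $\psi\in\M(\H)$, it follows that $\psi(Z_\fa)^*k_\lambda=P_{\H_\fa}M_\psi^*|_{\H_\fa}k_\lambda=\ol{\psi(\lambda)}\,k_\lambda$. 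Setting $\tau_\lambda(T)=\ip{Tk_\lambda,k_\lambda}/\|k_\lambda\|^2$ for $T\in\M_\fa$, this identity gives $\tau_\lambda(\psi(Z_\fa))=\psi(\lambda)$; since $\Gamma$ is multiplicative, $\tau_\lambda$ is a character of $\M_\fa$ with $\tau_\lambda(\fz_j)=\lambda_j$, and it is weak-$*$ continuous since it is a vector functional on $B(\H_\fa)$. For uniqueness, a character $\chi\in\Delta(\M_\fa)$ with $(\chi(\fz_1),\ldots,\chi(\fz_d))=\lambda$ pulls back through $\Gamma$ to a character of $\M(\H)$ that annihilates $\fa$ and sends each $M_{z_j}$ to $\lambda_j\in\bB_d$; invoking the description of $\supp\fa\cap\bB_d$ and of such characters from \cite{CT2019spectrum} (see the discussion preceding the lemma), this pullback must be evaluation at $\lambda$, whence $\chi=\tau_\lambda$.

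For part~(ii), I would introduce the continuous coordinate map $\kappa\colon\Delta(\M_\fa)\to\bC^d$, $\kappa(\chi)=(\chi(\fz_1),\ldots,\chi(\fz_d))$, whose range is $\supp\fa$ by definition, and consider $U=\kappa^{-1}(\bB_d)=\{\chi\in\Delta(\M_\fa):\sum_{j=1}^d|\chi(\fz_j)|^2<1\}$, which is open in $\Delta(\M_\fa)$. Since $\supp\fa\cap\bB_d=\Z_{\bB_d}(\fa)$, part~(i) shows that $\kappa$ restricts to a bijection of $U$ onto $\Z_{\bB_d}(\fa)$ with inverse $\lambda\mapsto\tau_\lambda$. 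This inverse is continuous: for each fixed $T\in\M_\fa$ the function $\lambda\mapsto\tau_\lambda(T)=\ip{Tk_\lambda,k_\lambda}/\|k_\lambda\|^2$ is continuous because $\lambda\mapsto k_\lambda$ is norm-continuous from $\bB_d$ into $\H$ — indeed $\|k_\lambda-k_{\lambda'}\|^2=k(\lambda,\lambda)-k(\lambda',\lambda)-k(\lambda,\lambda')+k(\lambda',\lambda')$ depends continuously on $(\lambda,\lambda')$ — and $\|k_\lambda\|^2=k(\lambda,\lambda)\geq 1$. Thus $\kappa|_U$ is a homeomorphism of $U$ onto $\Z_{\bB_d}(\fa)$ with its topology as a subset of $\bB_d$.

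To finish, under this homeomorphism $\lambda$ is an isolated point of $\Z_{\bB_d}(\fa)$ if and only if $\tau_\lambda$ is an isolated point of $U$; and since $U$ is open in $\Delta(\M_\fa)$, the singleton $\{\tau_\lambda\}$ is open in $U$ exactly when it is open in $\Delta(\M_\fa)$, i.e. exactly when $\tau_\lambda$ is an isolated point of $\Delta(\M_\fa)$. This yields the equivalence in~(ii).

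The step I expect to be the main obstacle is the uniqueness assertion in part~(i) — equivalently, that a character of $\M(\H)$ whose values on the coordinate multipliers lie in the open ball must be the corresponding point evaluation. Everything else is an elementary combination of the vector-state computation, point-set topology, and the already-recorded identity $\supp\fa\cap\bB_d=\Z_{\bB_d}(\fa)$; but this uniqueness genuinely needs the finer structure of $\M(\H)$ (solvability of Gleason's problem at interior points, or the classification of its weak-$*$ continuous characters), which I would import from \cite{CT2019spectrum} rather than reprove.
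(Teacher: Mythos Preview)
Your proof is correct and close in spirit to the paper's, but the two differ in packaging. For part~(i), the paper constructs $\tau_\lambda$ by factoring the evaluation character $\eps_\lambda$ through the quotient map $\Gamma$ (using $\ker\Gamma=\fa\subset\ker\eps_\lambda$), whereas you realize $\tau_\lambda$ directly as the vector state at $k_\lambda$; your route makes weak-$*$ continuity immediate, while the paper's makes the formula $\tau_\lambda(\psi(Z_\fa))=\psi(\lambda)$ immediate. Both then handle uniqueness identically by pulling back to a character of $\M(\H)$ over an interior point --- the paper cites \cite[Proposition~8.5]{hartz2017isom} for this, which is exactly the input you flagged as the main obstacle. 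For part~(ii), the paper verifies each direction by hand (choosing a small ball for the forward direction, and a basic weak-$*$ neighbourhood determined by finitely many multipliers for the converse), while you package the same content as the single observation that $\kappa|_U$ is a homeomorphism onto $\Z_{\bB_d}(\fa)$ and that $U$ is open in $\Delta(\M_\fa)$; your formulation is cleaner and makes the equivalence transparent, though the underlying computations are the same.
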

\begin{proof}
(i) Let $\eps_\lambda$  be the weak-$*$ continuous evaluation character on $\M(\H)$ corresponding to $\lambda$. Recall that  the map $\Gamma:\M(\H)\to \M_\fa$ defined as
\[
 \Gamma(\psi)=\psi(Z_\fa), \quad \psi\in \M(\H)
\]
is a weak-$*$ continuous unital surjective homomorphism. Moreover, we have that $\ker \Gamma=\fa$. Because $\lambda\in \Z_{\bB_d}(\fa)$, we see that $\fa\subset \ker \eps_\lambda$. Thus, there is a weak-$*$ continuous character $\tau_\lambda\in \Delta(\M_\fa)$ such that $\tau_\lambda\circ \Gamma=\eps_\lambda$. Then, we have
  \[
 \lambda=(\tau_\lambda(\fz_1),\ldots,\tau_\lambda(\fz_d)).
\]
To show that $\tau_\lambda$ is unique, let $\chi\in \Delta(\M_\fa)$ such that
  \[
 \lambda=(\chi(\fz_1),\ldots,\chi(\fz_d)).
\]
Then, $\chi\circ \Gamma\in \Delta(\M(\H))$ and 
\[
 ((\chi\circ \Gamma)(M_{z_1}),\ldots,(\chi\circ \Gamma)(M_{z_d}))=\lambda.
\]
It follows from \cite[Proposition 8.5]{hartz2017isom} that $\chi\circ \Gamma=\eps_\lambda$, whence $\chi=\tau_\lambda$ since $\Gamma$ is surjective.

(ii) Assume that $\lambda\in\Z_{\bB_d}(\fa)$ is an isolated point. Choose $r>0$ small enough so that $B(\lambda,r)\subset \bB_d$ and
\[
B(\lambda,r)\cap \Z_{\bB_d}(\fa)=\{\lambda\}.
\] 
To see that $\tau_\lambda$ is an isolated point of $\Delta(\M_\fa)$, consider
\[
 U=\{\chi\in \Delta(\M_\fa): \chi(Z_\fa)\in B(\lambda,r)\}
\]
which is a weak-$*$ open neighbourhood of $\tau_\lambda$ in $\Delta(\M_\fa)$. We claim that $U=\{\tau_\lambda\}$. Indeed, let $\chi\in U$ and put $\mu=\chi(Z_\fa)$. Then, we see that $\mu\in \bB_d$ by choice of $r$. But we also have that $\mu\in \supp \fa$ by definition of the support, so that
\[
 \mu\in \supp \fa\cap \bB_d=\Z_{\bB_d}(\fa).
\]
Hence $\mu\in \Z_{\bB_d}(\fa)\cap B(\lambda,r)=\{\lambda\}$. Invoking  (i) we infer that $\tau_\lambda=\chi$.

Conversely, assume that $\tau_\lambda$ is an isolated point of $\Delta(\M_\fa)$. Thus, there are $\psi_1,\ldots,\psi_n\in \M(\H)$ and $\eps>0$ such that
\[
 \{\chi\in \Delta(\M_\fa):|\chi(\psi_j(Z_\fa))-\tau_\lambda(\psi_j(Z_\fa))|<\eps, \quad 1\leq j\leq d\}=\{\tau_\lambda\}.
\]
Choose $\delta>0$ small enough so that if $\mu\in \bB_d$ satisfies $\|\mu-\lambda\|<\delta$, then
\[
 |\psi_j(\mu)-\psi_j(\lambda)|<\eps, \quad 1\leq j\leq d.
\]
By (i), we conclude that if $\mu\in \Z_{\bB_d}(\fa)\cap B(\lambda,\delta)$ then
\[
 |\tau_\mu(\psi_j(Z_\fa))-\tau_\lambda(\psi_j(Z_\fa))|<\eps, \quad 1\leq j\leq d
\]
whence $\tau_\mu=\tau_\lambda$ and $\mu=\lambda$. Therefore $\lambda$ is an isolated point of $\Z_{\bB_d}(\fa)$.
\end{proof}

We mention in passing one basic consequence of the previous lemma. Roughly speaking, it says that if the Gelfand transform of $\M_\fa$ is isometric, then the support of $\fa$ cannot be contained in the open ball. This will be useful below.

\begin{lemma}\label{L:DeltaMainterior}
Let $\H$ be a  regular unitarily invariant complete Nevanlinna--Pick space. Let $\fa\subset \M(\H)$ be a proper weak-$*$ closed ideal such that $\H_\fa$ has dimension greater than one and $\supp \fa\subset \bB_d$. Then, the Gelfand transform of $\M_\fa$ is not isometric.
\end{lemma}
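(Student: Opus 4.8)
The plan is to argue by contradiction, with the workhorse being the maximum modulus principle for $\M_\fa$ (Theorem~\ref{T:maximummodulus}). Suppose the Gelfand transform $g_{\M_\fa}$ is isometric, so that
\[
 \|\psi(Z_\fa)\| = \max\bigl\{\, |\chi(\psi(Z_\fa))| : \chi \in \Delta(\M_\fa)\,\bigr\}, \qquad \psi \in \M(\H).
\]
First I would rewrite the right-hand side in terms of $\supp\fa$. By definition of the support, any $\chi \in \Delta(\M_\fa)$ produces a point $\lambda=(\chi(\fz_1),\ldots,\chi(\fz_d))\in\supp\fa$, which by hypothesis lies in $\bB_d$ and hence in $\supp\fa\cap\bB_d=\Z_{\bB_d}(\fa)$; Lemma~\ref{L:isolchar}(i) then forces $\chi=\tau_\lambda$ and yields $\chi(\psi(Z_\fa))=\psi(\lambda)$. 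Since $\Delta(\M_\fa)$ is weak-$*$ compact (and non-empty, as $\fa$ is proper) and $\chi\mapsto(\chi(\fz_j))_j$ is weak-$*$ continuous, $\supp\fa$ is a compact subset of $\bB_d$; because multipliers are holomorphic, hence continuous, on $\bB_d$, I would obtain
\[
 \|\psi(Z_\fa)\| = \max_{\lambda\in\supp\fa}|\psi(\lambda)|,
\]
with the maximum attained at some $\lambda_0\in\supp\fa\subset\bB_d$.

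The key step is then to feed this into Theorem~\ref{T:maximummodulus}: applying it with $n=1$, $M=\psi(Z_\fa)$, and the interior zero $\lambda_0\in\Z_{\bB_d}(\fa)$ at which $\|\psi(Z_\fa)\|=|\psi(\lambda_0)|$, I would conclude that $\psi(Z_\fa)$ is a scalar multiple of $I_{\H_\fa}$. As this applies to every $\psi\in\M(\H)$ and $\M_\fa=\{\psi(Z_\fa):\psi\in\M(\H)\}$, it follows that $\M_\fa=\bC\, I_{\H_\fa}$.

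To conclude, I would use that $\Gamma_\fa\colon\M(\H)/\fa\to\M_\fa$ is an isomorphism, so $\M(\H)/\fa$ is one-dimensional and $\fa$ is a maximal ideal; Lemma~\ref{L:properideal}(ii) then forces $\H_\fa$ to be one-dimensional, contradicting the hypothesis $\dim\H_\fa>1$. (Equivalently, $\M_\fa=\bC\,I_{\H_\fa}$ turns the rank-at-most-one operator $P_{\H_\fa}P_{\bC 1}|_{\H_\fa}$ of Lemma~\ref{L:irred}(ii) into a non-zero scalar, again forcing $\dim\H_\fa=1$.) I expect the only real obstacle to be the second step: the whole argument rests on Theorem~\ref{T:maximummodulus}, and the hypothesis $\supp\fa\subset\bB_d$ is precisely what is needed to meet its hypothesis — it guarantees that whenever $\|\psi(Z_\fa)\|$ equals its Gelfand norm, that value is attained at an \emph{interior} zero of $\fa$ rather than on the sphere $\bS_d$, where no such rigidity is available.
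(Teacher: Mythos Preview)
Your proposal is correct and follows essentially the same approach as the paper: both arguments use Lemma~\ref{L:isolchar}(i) to identify each character with evaluation at an interior zero, and then invoke Theorem~\ref{T:maximummodulus} (in the $n=1$ case) as the decisive step. The only cosmetic difference is that the paper argues directly rather than by contradiction: since $\dim\H_\fa>1$, Lemma~\ref{L:properideal}(ii) gives a $\psi\in\M(\H)\setminus(\fa+\bC 1)$, so $\psi(Z_\fa)$ is not a scalar, and the contrapositive of Theorem~\ref{T:maximummodulus} yields $\|\psi(Z_\fa)\|>|\psi(\lambda)|$ for every $\lambda\in\Z_{\bB_d}(\fa)$, which is the required witness.
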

\begin{proof}
Note that if $\fa+\bC 1=\M(\H)$, then $\fa$ is maximal and thus $\H_\fa$ is one-dimensional by Lemma \ref{L:properideal}. Hence, we may assume that there is $\psi\in \M(\H)$ that does not lie in $\fa+\bC 1$. It thus follows from Theorem \ref{T:maximummodulus} that $\|\psi(Z_\fa)\|>|\psi(\lambda)|$ for every $\lambda\in \Z_{\bB_d}(\fa)$.

Next, let $\chi\in \Delta(\M_\fa)$ and let $\mu=\chi(Z_\fa).$ By assumption, we see that $\mu\in \supp \fa\subset \bB_d$, so that $\mu\in \Z_{\bB_d}(\fa)$.  But using Lemma \ref{L:isolchar}(i) we find  $\chi(\psi(Z_\fa))=\psi(\mu)$. By the first paragraph, we thus conclude that $|\chi(\psi(Z_\fa))|<\|\psi(Z_\fa)\|$. Since $\chi\in \Delta(\M_\fa)$ was arbitrary, we conclude that the Gelfand transform of $\M_\fa$ is not isometric.
\end{proof}

The main thrust for what is to come is given by the following, which is the analogue of Theorem \ref{T:shilovisolAa} in the context of $\M_\fa$.

\begin{theorem}\label{T:shilovisolMa}
Let $\H$ be a  regular unitarily invariant complete Nevanlinna--Pick space. Let $\fa\subset \M(\H)$ be a proper weak-$*$ closed ideal. Assume that $\supp \fa$ has an isolated point $\lambda\in \bB_d$. Then, the following statements hold.
\begin{enumerate}[{\rm (i)}]
\item The algebra $\M_\fa$ contains a non-zero idempotent finite-rank operator and the identity representation of $\rC^*(\M_\fa)$ is a boundary representation for $\M_\fa$. Moreover, the Gelfand transform of $\M_\fa$ is not completely isometric if $\H_\fa$ has dimension greater than one.

 \item If $\supp \fa$ consists of more than one point, then 
the Gelfand transform of $\M_\fa$ is not isometric. 
\end{enumerate}
\end{theorem}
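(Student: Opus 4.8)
The plan is to extract a single non-zero idempotent in $\M_\fa$ from the isolated character $\tau_\lambda$ and then read off all the conclusions from it. First I would reduce to the zero set: since $\bB_d$ is open and $\supp\fa\cap\bB_d=\Z_{\bB_d}(\fa)$, the point $\lambda$ is also an isolated point of $\Z_{\bB_d}(\fa)$, so Lemma~\ref{L:isolchar}(ii) shows the weak-$*$ continuous character $\tau_\lambda$ is an isolated point of $\Delta(\M_\fa)$. As $\M_\fa$ is a unital commutative Banach algebra, the Shilov idempotent theorem then produces an idempotent $e=\psi_0(Z_\fa)\in\M_\fa$ (with $\psi_0\in\M(\H)$) such that $\tau_\lambda(e)=1$ and $\chi(e)=0$ for every $\chi\in\Delta(\M_\fa)\setminus\{\tau_\lambda\}$; in particular $e\neq 0$. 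Throughout I use that $\rC^*(\M_\fa)\supseteq\fT_\fa\supseteq\fK(\H_\fa)$ and that $\fT_\fa$, hence $\rC^*(\M_\fa)$, acts irreducibly on $\H_\fa$ (Lemma~\ref{L:irred}(iii)).

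For part (i), I would first argue that $e$ has finite rank. The cut-down $e\M_\fa$ is a commutative algebra with unit $e$ isomorphic to $\M_\fa/(1-e)\M_\fa$, and pulling back through $\Gamma_\fa$ this is $\M(\H)/\fc$ with $\fc=\fa+(1-\psi_0)\M(\H)$; one checks directly that $\ran e^{*}=\H_\fc$, so $\operatorname{rank}e=\dim\H_\fc$. Since $\Delta(\M(\H)/\fc)\cong\Delta(e\M_\fa)=\{\tau_\lambda\}$, the ideal $\fc$ has $\supp\fc=\{\lambda\}$ with $\lambda\in\bB_d$. The technical heart is the claim that a proper weak-$*$ closed ideal whose support is a single point of $\bB_d$ has finite codimension in $\M(\H)$; I would prove this by passing to germs at $\lambda$ (the germs of $\fc$ cut out the isolated point $\lambda$ of the germ of its zero variety, so the local ring $\mathcal{O}_\lambda/\fc_\lambda$ is Artinian, hence $\mathfrak{m}_\lambda^{N}\subseteq\fc_\lambda$ for some $N$) and then transferring this back to $\M(\H)$ using that $\fc$ ``sees nothing'' off $\lambda$ because $\supp\fc=\{\lambda\}$ — alternatively this may be extracted from the spectral analysis of \cite{CT2019spectrum}. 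Granting it, $e$ is a non-zero finite-rank idempotent in $\M_\fa$, hence a non-zero compact operator in $\M_\fa$, so for the quotient map $q\colon\rC^*(\M_\fa)\to\rC^*(\M_\fa)/\fK(\H_\fa)$ we have $q(e)=0$ while $\|e\|\geq 1$; thus $q$ is not isometric, a fortiori not completely isometric, on $\M_\fa$, and Theorem~\ref{T:bdrythm} yields that the identity representation of $\rC^*(\M_\fa)$ is a boundary representation for $\M_\fa$. If in addition $\dim\H_\fa>1$, then since $\rC^*(\M_\fa)$ contains $\fK(\H_\fa)$, the implication $\mathrm{(i)}\Rightarrow\mathrm{(iv)}$ of Corollary~\ref{C:Maunifalg}, taken contrapositively, shows the Gelfand transform of $\M_\fa$ is not completely isometric.

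For part (ii), assume $\supp\fa$ has at least two points; then $\Delta(\M_\fa)$ has at least two points, since $\chi\mapsto(\chi(\fz_1),\dots,\chi(\fz_d))$ maps $\Delta(\M_\fa)$ onto $\supp\fa$. I claim $\|e\|>1$. Otherwise $\|e\|=1$ (as $e$ is a non-zero idempotent), so $e$ is an orthogonal projection; being self-adjoint and lying in the commutative algebra $\M_\fa$, $e$ commutes with $\rC^*(\M_\fa)$, so $\ran e$ reduces $\rC^*(\M_\fa)$. Irreducibility forces $\ran e\in\{0,\H_\fa\}$, and since $e\neq 0$ we get $e=I_{\H_\fa}$, whence $\chi(e)=1$ for every $\chi\in\Delta(\M_\fa)$ — contradicting the existence of $\chi\neq\tau_\lambda$ with $\chi(e)=0$. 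Therefore $\|\psi_0(Z_\fa)\|=\|e\|>1=\max\{|\chi(e)|:\chi\in\Delta(\M_\fa)\}$, so the Gelfand transform of $\M_\fa$ is not isometric.

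The main obstacle is the finite-dimensionality claim in part (i): showing that a weak-$*$ closed ideal supported at a single interior point has finite codimension requires a genuine passage between the global multiplier-algebra quotient and local analytic germs. Everything else is a fairly direct deployment of the Shilov idempotent theorem, Arveson's boundary theorem (Theorem~\ref{T:bdrythm}), the envelope criterion (Corollary~\ref{C:Maunifalg}), and the irreducibility furnished by Lemma~\ref{L:irred}.
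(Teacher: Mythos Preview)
Your argument for part~(ii) is correct and is genuinely different from the paper's. Both proofs produce the Shilov idempotent $e=\theta(Z_\fa)$ at the isolated character $\tau_\lambda$, but the paper then invokes the maximum modulus principle (Theorem~\ref{T:maximummodulus}): since $\theta(Z_\fa)$ is not a scalar multiple of the identity, $\|\theta(Z_\fa)\|>|\theta(\lambda)|=1$. Your route instead observes that a norm-one idempotent is a self-adjoint projection, hence (being in the commutative algebra $\M_\fa$) commutes with both $\M_\fa$ and $\M_\fa^{*}$, so with all of $\rC^*(\M_\fa)$; irreducibility then forces $e=I$, contradicting the existence of a second character killing $e$. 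This is a clean, self-contained alternative that bypasses Theorem~\ref{T:maximummodulus} entirely.

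For part~(i), the two proofs diverge earlier: the paper does \emph{not} use the Shilov idempotent. Instead it identifies $\supp\fa$ with the Taylor spectrum of $Z_\fa$ via \cite[Theorem~3.8]{CT2019spectrum} and takes the Riesz idempotent $E\in\{Z_\fa\}''$ associated with the isolated spectral point $\lambda$ \cite[Theorem~2.1(iii)]{CT2019spectrum}; Commutant Lifting then places $E$ in $\M_\fa$. The finite-rank step is handled directly: by \cite[Theorem~3.1 and Lemma~4.14]{CT2019spectrum} one has $\fz_j|_{\ran E}=\lambda_j I+N_j$ with each $N_j$ nilpotent, and since $EP_{\H_\fa}1$ is cyclic for $Z_\fa|_{\ran E}$, the range is spanned by finitely many vectors $N^\alpha EP_{\H_\fa}1$.

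Your finite-rank argument has a real gap at precisely the point you flag. The reduction to the auxiliary ideal $\fc$ and the identification $\ran e^{*}=\H_{\fc}$ are fine, and passing to germs does give $\mathfrak{m}_\lambda^{N}\subset\fc_\lambda$ in $\O_\lambda$. But the ``transfer back'' from the germ inclusion to a statement about the weak-$*$ closed multiplier ideal is not justified: knowing that every germ in $\mathfrak{m}_\lambda^{N}$ is locally a combination of elements of $\fc$ with analytic coefficients says nothing a priori about global membership in $\fc$ or about $\dim\H_\fc$. What actually closes this gap is exactly the nilpotent-plus-cyclic argument above, which is the content of the references in \cite{CT2019spectrum} you allude to --- so your alternative suggestion lands on the paper's proof. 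In short: your Shilov construction of $e$ is a legitimate variant, but the finite-dimensionality step ultimately needs the paper's spectral input rather than local commutative algebra.
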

\begin{proof}
(i) By \cite[Theorem 3.8]{CT2019spectrum}, we know that the Taylor spectrum of $Z_\fa$ coincides with
\[
 \{\chi(Z_\fa):\chi\in \Delta(\M_\fa)\}=\supp \fa.
\]
By assumption, $\lambda$ is an isolated point of $\supp \fa$, and hence of the Taylor spectrum of $Z_\fa$. Using \cite[Theorem 2.1(iii)]{CT2019spectrum}, we obtain a non-zero idempotent $E\in \{Z_\fa\}''$ such that the Taylor spectrum of $Z_\fa|_{\ran E}$ consists only of $\{\lambda\}$. On the other hand, we may invoke \cite[Theorem 3.1 and Lemma 4.14]{CT2019spectrum} to see that there are commuting nilpotent operators $N_1,\ldots,N_d$ on $\ran E$ such that 
\[
 \fz_j|_{\ran E}=\lambda_j I_{\ran E}+N_j, \quad 1\leq j\leq d.
\]
Now, since $P_{\H_\fa}1$ is cyclic for $Z_\fa$, the vector $EP_{\H_\fa}1$ is cyclic for $Z_\fa|_{\ran E}$, and it is readily seen that this forces $\ran E$ to be finite-dimensional. Finally, we may apply the Commutant Lifting Theorem \cite[Theorem 5.1]{BTV2001} to obtain $\omega\in \M(\H)$ such that $E=\omega(Z_\fa)$. Then, $\omega(Z_\fa)$ has finite-rank. By virtue of Theorem \ref{T:bdrythm}, we conclude that the identity representation of $\rC^*(\M_\fa)$ is a boundary representation for $\M_\fa$. Applying Corollary \ref{C:Maunifalg}, we also see that 
the Gelfand transform of $\M_\fa$ is not completely isometric whenever $\H_\fa$ has dimension greater than one.

(ii) To see that the Gelfand transform of $\M_\fa$ is not isometric when $\supp \fa$ consists of more than one point, we argue as in the proof of Theorem \ref{T:shilovisolAa}. Let $\tau_\lambda\in \Delta(\M_\fa)$ be as in Lemma \ref{L:isolchar}(i). By Lemma \ref{L:isolchar}(ii), we see that $\tau_\lambda$ is an isolated point of $\Delta(\M_\fa)$. By the Shilov idempotent theorem, there is a multiplier $\theta\in \M(\H)$ such that $\theta(\lambda)=1$ and $\gamma(\theta(Z_\fa))=0$ for every $\gamma\in \Delta(\M_\fa)\setminus\{\tau_\lambda\}$. Since $\Delta(\M_\fa)$ is not a singleton, we infer that $\theta(Z_\fa)$ is not a constant multiple of the identity, so $\|\theta(Z_\fa)\|>|\theta(\lambda)|$ by Theorem \ref{T:maximummodulus}. Hence,
\[
 \|\theta(Z_\fa)\|>\max\{|\chi(\theta(Z_\fa))|:\chi\in \Delta(\M_\fa) \}
\]
and the Gelfand transform of $\M_\fa$ is not isometric. 
                                                                                                                                                                                                                                                                                                                                                  
 \end{proof}

We note that the topological restrictions on the support of the ideal in Theorems \ref{T:shilovisolAa} and \ref{T:shilovisolMa} cannot simply be removed entirely as Examples \ref{E:DAAaGelf} and \ref{E:DAMaGelf} illustrate. Furthermore, the condition that $\supp \fa$ has an isolated point that lies in the \emph{interior} of the unit ball is important for the proof Theorem \ref{T:shilovisolMa}(i) to work, as we show next.

\begin{example}\label{E:intcompact}
 Let $\H=H^2_1$ be the Hardy space on the unit disc. Let $\theta\in \M(\H)$ be the singular inner function
 \[
  \theta(z)=\exp\left(\frac{z+1}{z-1} \right), \quad z\in \bD.
 \]
Let $\fa=\theta\M(\H)$. It is well known that the spectrum of $Z_\fa$ is $\supp \fa=\{1\}$ in this case, so that the multiplier $\omega$ from the proof of Theorem \ref{T:shilovisolMa}(i) can be taken to be the constant function $1$. Thus, $\omega(Z_\fa)=I$ here. But this operator is not compact since $\H_\fa$ is easily seen to be infinite-dimensional. \qed
\end{example}

 As mentioned in Example \ref{E:HardyMaGelf},  in the classical case where $\H$ is the Hardy space on the disc, the Gelfand transform $\M_\fa$ is \emph{never} isometric, unless $\H_\fa$ is one-dimensional \cite[Theorem 4.2]{clouatre2015CB}. We suspect that this a reflection of a more general phenomenon that we will explore further in upcoming work. We substantiate our suspicion in the following result.

\begin{corollary}\label{C:sequenceMa}
	Let $\H$ be a  regular unitarily invariant complete Nevanlinna--Pick space. Let $\fa\subset \M(\H)$ be a proper weak-$*$ closed ideal with the property that $\Z_{\bB_d}(\fa)$ has an isolated point. 
Then, the following statements are equivalent.
 \begin{enumerate}
  \item[\rm{(i)}] The ideal $\fa$ is the vanishing ideal of a single point.
  
  \item[\rm{(ii)}] The space $\H_\fa$ is one-dimensional.
  
    \item[\rm{(iii)}] The Gelfand transform of $\M_\fa$ is completely isometric.
    
    \item[\rm{(iv)}] The Gelfand transform of $\M_\fa$ is  isometric. 
 \end{enumerate}
\end{corollary}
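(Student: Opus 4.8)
The plan is to prove the chain of implications $\mathrm{(i)}\Rightarrow\mathrm{(ii)}\Rightarrow\mathrm{(iii)}\Rightarrow\mathrm{(iv)}\Rightarrow\mathrm{(i)}$, relying on Theorem \ref{T:shilovisolMa} for the one genuinely nontrivial step.

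First, $\mathrm{(i)}\Rightarrow\mathrm{(ii)}$: if $\fa$ is the vanishing ideal of a single point $\lambda\in\bB_d$, then in particular $\fa$ is maximal (its quotient $\M(\H)/\fa$ is $\bC$ via evaluation at $\lambda$), so Lemma \ref{L:properideal}(ii) gives that $\H_\fa$ is one-dimensional. Next, $\mathrm{(ii)}\Rightarrow\mathrm{(iii)}$ is immediate: when $\dim\H_\fa=1$ we have $\M_\fa\cong\bC$, whose Gelfand transform is trivially completely isometric (it is the identity on a one-dimensional commutative $\rC^*$-algebra). The implication $\mathrm{(iii)}\Rightarrow\mathrm{(iv)}$ is trivial, since a completely isometric map is in particular isometric.

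The substance is in $\mathrm{(iv)}\Rightarrow\mathrm{(i)}$, and here the hypothesis that $\Z_{\bB_d}(\fa)$ has an isolated point is essential. Let $\lambda$ be such an isolated point. By Theorem \ref{T:lattice} and the discussion of the support, $\Z_{\bB_d}(\fa)=\supp\fa\cap\bB_d$; I would first argue that in fact $\supp\fa\subset\bB_d$. Indeed, if $\supp\fa$ contained a point not in $\bB_d$ then $\supp\fa$ would consist of more than one point, so Theorem \ref{T:shilovisolMa}(ii) — applicable because $\lambda\in\bB_d$ is an isolated point of $\supp\fa$ — would force the Gelfand transform of $\M_\fa$ to fail to be isometric, contradicting $\mathrm{(iv)}$. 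Hence $\supp\fa\subset\bB_d$. Now if $\H_\fa$ had dimension greater than one, Lemma \ref{L:DeltaMainterior} (with $\supp\fa\subset\bB_d$) would again contradict the isometry of the Gelfand transform. Therefore $\H_\fa$ is one-dimensional, so $\fa$ is maximal by Lemma \ref{L:properideal}(ii), and its unique character corresponds via Theorem \ref{T:specAaTa}(i) (or directly) to a single point $\mu\in\ol{\bB_d}$; since $\supp\fa\subset\bB_d$ we get $\mu\in\bB_d$, and maximality of $\fa$ together with $\ker\Gamma=\fa$ identifies $\fa$ with $\ker\eps_\mu$, i.e. $\fa$ is the vanishing ideal of the single point $\mu$. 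This closes the cycle.

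The main obstacle is the case analysis in $\mathrm{(iv)}\Rightarrow\mathrm{(i)}$: one must carefully rule out both the possibility that $\supp\fa$ meets the sphere and the possibility that $\H_\fa$ is large, and the two available tools (Theorem \ref{T:shilovisolMa}(ii) and Lemma \ref{L:DeltaMainterior}) each require slightly different hypotheses ("more than one point in $\supp\fa$" versus "$\dim\H_\fa>1$ and $\supp\fa\subset\bB_d$"), so the order in which they are invoked matters. Once one observes that the isolated point forces either a contradiction or that $\supp\fa$ is a single interior point with $\H_\fa$ one-dimensional, the identification of $\fa$ as a point-vanishing ideal is routine via Theorem \ref{T:lattice} and Lemma \ref{L:properideal}.
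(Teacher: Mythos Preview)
Your proof is correct and follows essentially the same route as the paper: both rely on Theorem \ref{T:shilovisolMa}(ii) to force $\supp\fa$ to be a single interior point and then on Lemma \ref{L:DeltaMainterior} to force $\dim\H_\fa=1$. The only organizational difference is that the paper proves (i)$\Leftrightarrow$(ii) separately and then runs (ii)$\Rightarrow$(iii)$\Rightarrow$(iv)$\Rightarrow$(ii), whereas you close a single cycle (i)$\Rightarrow$(ii)$\Rightarrow$(iii)$\Rightarrow$(iv)$\Rightarrow$(i); your (i)$\Rightarrow$(ii) via maximality and Lemma \ref{L:properideal}(ii) is in fact a bit slicker than the paper's direct computation of $\H_\fa=\bC k_\lambda$.

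Two small points of exposition worth tightening. First, when you invoke Theorem \ref{T:shilovisolMa}(ii) you assert that $\lambda$ is isolated in $\supp\fa$, but the hypothesis only gives that $\lambda$ is isolated in $\Z_{\bB_d}(\fa)=\supp\fa\cap\bB_d$; you should remark that since $\lambda\in\bB_d$ and $\bB_d$ is open, isolation in $\supp\fa\cap\bB_d$ automatically gives isolation in $\supp\fa$. Second, your reference to Theorem \ref{T:specAaTa}(i) at the end is misplaced (that result concerns $\A_\fa$); the cleaner justification is the one you gesture at with ``or directly'': once $\H_\fa$ is one-dimensional, $\fa=\ker\Gamma$ is maximal, the unique character of $\M_\fa$ composed with $\Gamma$ is a character of $\M(\H)$ sending $M_{z_j}$ into $\bB_d$ (since $\supp\fa\subset\bB_d$), and such characters are evaluations by Lemma \ref{L:isolchar}(i) (or \cite[Proposition 8.5]{hartz2017isom}).
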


\begin{proof}
(i) $\Rightarrow$ (ii): We assume that there is $\lambda\in \bB_d$ with the property that
\[
 \fa=\{\psi\in \M(\H):\psi(\lambda)=0\}.
\]
 Then, $k_\lambda\in \H_\fa.$ Next, we note that $\bC k_\lambda$ is co-invariant for $\M(\H)$, so by Theorem \ref{T:lattice} there is a weak-$*$ closed ideal $\fb\subset \M(\H)$ such that $\bC k_\lambda =\H_\fb$ and $\fa\subset \fb$. On the other hand, the equality $[\fb \H]=(\bC k_\lambda)^\perp$ implies that every multiplier in $\fb$ vanishes at $\lambda$, whence $\fb \subset \fa$. We conclude that $\H_\fa=\H_\fb=\bC k_\lambda$.

(ii) $\Rightarrow$ (i): It follows from Lemma \ref{L:properideal} that $\fa$ is a maximal ideal, so there is a character $\chi\in \Delta(\M_\fa)$ such that $\fa=\ker \chi$. By assumption, there is $\lambda\in \Z_{\bB_d}(\fa)$. Let $\tau_\lambda\in \Delta(\M_\fa)$ be the corresponding character of evaluation at $\lambda$ (see Lemma \ref{L:isolchar}(i)). Then, we have
\[
 \ker \chi=\fa\subset \ker \tau_\lambda
\]
which forces $\chi=\tau_\lambda$ and $\fa=\ker \tau_\lambda$ as desired.

(ii) $\Rightarrow$ (iii),(iii) $\Rightarrow$ (iv) : This is trivial.

(iv) $\Rightarrow$ (ii): By Theorem \ref{T:shilovisolMa}(ii), we see that $\supp \fa$ consists of only one point lying in $\bB_d$. But then Lemma \ref{L:DeltaMainterior} forces $\H_\fa$ to be one-dimensional.\end{proof}

The previous corollary can be seen as a partial multivariate  analogue of \cite[Theorem 4.2]{clouatre2015CB}.

We close the paper with a result that illustrates that the topological condition of having an isolated zero can be removed entirely in some cases, leveraging our generalization of \cite{kennedyshalit2015corr}.

\begin{corollary}
    Let $0<\nu\leq 1$ and let $\H_\nu$ be the space corresponding to the kernel
    \[
     k^{(\nu)}(z,w)=\frac{1}{(1-\ip{z,w})^\nu}, \quad z,w\in \bB_d.
    \]
Let $\fa$ be a proper weak-$*$ closed ideal of $\M(\H_\nu)$ and let $\lambda\in \Z_{\bB_d}(\fa)$. 
Assume that $\supp \fa$ consists of more than one point and that either 
\begin{enumerate}[{\rm (i)}]

\item  $0<\nu<1$, or

\item $\nu=1$ and $\left(\bC (z_1-\lambda_1)+\ldots+\bC (z_d-\lambda_d)\right)\cap \fa=\{0\}$.
\end{enumerate} 
	Then, the Gelfand transform of $\M_\fa$ is not completely isometric.
	If, in addition, $\supp \fa$ is polynomially convex,
 	then the Gelfand transform of $\A_\fa$ is not completely isometric.
\end{corollary}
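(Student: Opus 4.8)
The plan is to split the argument according to whether the given point $\lambda$ is an isolated point of $\Z_{\bB_d}(\fa)$ or not, and in each case to invoke one of the results already proved in this section: Theorem~\ref{T:KST} when $\lambda$ is not isolated, and Theorem~\ref{T:shilovisolMa}(i) (together with Theorem~\ref{T:shilovisolAa} for the statement about $\A_\fa$) when it is. I will also use the elementary observation recorded just after Example~\ref{E:HardyMaGelf}: if the Gelfand transform of $\M_\fa$ is completely isometric, then so is that of $\A_\fa$; equivalently, a Gelfand transform of $\A_\fa$ which fails to be completely isometric forces the same for $\M_\fa$.

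First suppose $\lambda$ is not an isolated point of $\Z_{\bB_d}(\fa)$. Then conditions (i) and (ii) of the corollary are precisely the hypotheses of Theorem~\ref{T:KST}, so that theorem applies and shows that the quotient map $q:\fT_\fa\to\O_\fa$ is not completely isometric on $\A_\fa$; in particular the Gelfand transform of $\A_\fa$ is not completely isometric, and hence, by the observation above, neither is that of $\M_\fa$. In this case both conclusions of the corollary follow at once, and the polynomial convexity hypothesis is not needed.

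Now suppose $\lambda$ is an isolated point of $\Z_{\bB_d}(\fa)$. Since $\lambda\in\bB_d$ is an interior point and $\supp\fa\cap\bB_d=\Z_{\bB_d}(\fa)$ by \cite[Theorem~3.4]{CT2019spectrum}, I would pick $r>0$ with $B(\lambda,r)\subset\bB_d$ and $B(\lambda,r)\cap\Z_{\bB_d}(\fa)=\{\lambda\}$, from which $B(\lambda,r)\cap\supp\fa=\{\lambda\}$; thus $\lambda$ is an isolated point of $\supp\fa$ lying in $\bB_d$. Moreover, because $\supp\fa$ has more than one point, $\M_\fa$ has more than one character, so $\M_\fa\neq\bC I$ and $\H_\fa$ has dimension greater than one. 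Theorem~\ref{T:shilovisolMa}(i) then yields that the Gelfand transform of $\M_\fa$ is not completely isometric. If in addition $\supp\fa$ is polynomially convex, then $\widehat{\supp\fa}=\supp\fa$ consists of more than one point and has the isolated point $\lambda\in\bB_d$, so Theorem~\ref{T:shilovisolAa} gives that the Gelfand transform of $\A_\fa$ is not even isometric, hence not completely isometric.

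The only delicate points, beyond citing these theorems, are the passage from ``$\lambda$ isolated in $\Z_{\bB_d}(\fa)$'' to ``$\lambda$ isolated in $\supp\fa$'', which genuinely uses that $\lambda$ is an interior point together with the identity $\supp\fa\cap\bB_d=\Z_{\bB_d}(\fa)$ (and which also explains why polynomial convexity is required for the $\A_\fa$ conclusion in this case but not in the non-isolated case), and the bookkeeping ensuring that the hypotheses of Theorem~\ref{T:KST}---in particular condition (ii), which refers to this same $\lambda$---are met when $\lambda$ is not isolated. Everything else is a direct citation, so I do not anticipate a genuine obstacle.
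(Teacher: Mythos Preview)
Your proof is correct and follows essentially the same case split and citations as the paper. The only minor difference is that in the isolated case you invoke Theorem~\ref{T:shilovisolMa}(i) (after checking $\dim\H_\fa>1$), whereas the paper invokes part~(ii) directly to get the slightly stronger conclusion that the Gelfand transform of $\M_\fa$ is not even isometric; both routes suffice for the corollary as stated, and your explicit passage from ``isolated in $\Z_{\bB_d}(\fa)$'' to ``isolated in $\supp\fa$'' is a detail the paper leaves implicit.
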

\begin{proof}
	First suppose that $\lambda$ is not an isolated point of $\Z_{\bB_d}(\fa)$.
	Then, it follows from Theorem \ref{T:KST} that the Gelfand transform of $\A_\fa$ is not completely isometric. As explained in the remark following Example \ref{E:HardyMaGelf}, the Gelfand transform of $\M_\fa$ is not completely isometric either. 
	
	Next, suppose that $\lambda$ is an isolated point of $\Z_{\bB_d}(\fa)$.
	Since $\supp \fa$ consists of more than one point, it follows from Theorem \ref{T:shilovisolMa}(ii) that the Gelfand transform of $\M_\fa$ is not isometric. If $\supp \fa$ is polynomially convex, then we see that $\lambda$ is also an isolated point of $\widehat{\supp\fa }$, and it now follows from Theorem \ref{T:shilovisolAa} that the Gelfand transformation of $\A_\fa$ is not completely isometric.
\end{proof}

\bibliography{biblio_main}
\bibliographystyle{plain}

\end{document}